\newtheorem{theorem}{Theorem}
\newtheorem{lemma}[theorem]{Lemma}
\newtheorem{corollary}[theorem]{Corollary}
\newtheorem{proposition}[theorem]{Proposition}
\newtheorem{conjecture}[theorem]{Conjecture}
\theoremstyle{definition}
\newtheorem{definition}[theorem]{Definition}
\newtheorem{example}[theorem]{Example}
\newtheorem{remark}[theorem]{Remark}
\title{More on the optimal arrangement of $2d$ lines in $\mathbb{C}^d$}
\author{Joseph W.\ Iverson\thanks{Department of Mathematics, Iowa State University, Ames, Iowa, USA} \and John Jasper\thanks{Department of Mathematics and Statistics, Air Force Institute of Technology, Wright-Patterson AFB, Ohio, USA} \and Dustin G.\ Mixon\thanks{Department of Mathematics, The Ohio State University, Columbus, Ohio, USA} \thanks{Translational Data Analytics Institute, The Ohio State University, Columbus, Ohio, USA}}
\date{}
\begin{document}
\maketitle

\begin{abstract}
We introduce a new infinite family of $d\times 2d$ equiangular tight frames.
Many matrices in this family consist of two $d\times d$ circulant blocks.
We conjecture that such equiangular tight frames exist for every $d$.
We show that our conjecture holds for $d\leq 165$ by a computer-assisted application of a Newton--Kantorovich theorem.
In addition, we supply numerical constructions that corroborate our conjecture for $d\leq 1500$.
\end{abstract}

\section{Introduction}

Consider the problem of arranging a given number of points in complex projective space so as to maximize the minimum pairwise distance.
After identifying each point with a rank-$1$ orthogonal projection matrix, one might expect the optimal arrangement to form the vertices of a regular simplex in matrix space.
In fact, if the centroid of these pairwise equidistant vertices coincides with the centroid of projective space, namely, the unit-trace multiple of the identity matrix, then the points indeed form an optimal projective code known as an \textbf{equiangular tight frame (ETF)}; see~\cite{Welch:74,ConwayHS:96,StrohmerH:03,FickusM:15}.
It is convenient to represent a point in complex projective space $\mathbb{CP}^{d-1}$ with a unit vector in $\mathbb{C}^d$, in which case an $n$-point ETF can be thought of as a matrix $\Phi\in\mathbb{C}^{d\times n}$ with unit-norm columns such that the Gram matrix $\Phi^*\Phi$ has an off-diagonal of constant modulus and the so-called \textit{frame operator} $\Phi\Phi^*$ is a multiple of the identity matrix.

By virtue of their optimality as codes, ETFs find applications in multiple description coding~\cite{StrohmerH:03}, compressed sensing~\cite{BandeiraFMW:13}, digital fingerprinting~\cite{MixonQKF:13}, and quantum state tomography~\cite{RenesBSC:04}, which in turn has prompted a flurry of work to construct these objects; see~\cite{FickusM:15} for a survey.
As with many objects in combinatorial design, ETFs have a reputation for being miracles that only exist for very particular choices of $(d,n)$, although there are parameterized families of $(d,n)$ for which $d\times n$ ETFs are known to exist, and others for which they appear to exist.
Perhaps the most famous example is \textit{Zauner's conjecture}~\cite{Zauner:99}, which states that $d\times n$ ETFs (of a particular form) exist whenever $n=d^2$.
Fallon and Iverson~\cite{FallonI:23} recently posed an analogous conjecture for $n=2d$.

\begin{conjecture}[weak $d\times 2d$ conjecture; see~\cite{FallonI:23}]
For every $d$, there exists a $d\times 2d$ ETF.
\end{conjecture}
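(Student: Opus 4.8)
\section*{Proof proposal}

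The plan is not to settle the conjecture outright but to follow the route that makes it a theorem for every $d$ up to a computable bound, while exhibiting the infinite family that we believe witnesses it for all $d$. First I would fix a \emph{Gram-matrix} description of a prospective $d\times 2d$ ETF. Write $\Phi=[\,A\mid B\,]$ with $A,B\in\mathbb{C}^{d\times d}$ circulant, say generated by $a,b\in\mathbb{C}^d$. The tight-frame condition $\Phi\Phi^*=2I_d$ and the equiangularity condition---that every off-diagonal entry of $\Phi^*\Phi$ has modulus equal to the Welch bound $\mu=\tfrac{1}{\sqrt{2d-1}}$---become, after conjugating the circulants by the discrete Fourier transform, a finite list of polynomial equations in $a$ and $b$. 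Concretely, unit norms and tightness read $\|a\|=\|b\|=1$ and $|\hat a_k|^2+|\hat b_k|^2=2$ for each frequency $k$, while equiangularity asks that the cyclic autocorrelations $a\star a$, $b\star b$ and the cyclic cross-correlation $a\star b$ all have modulus $\mu$ off the trivial coordinate. After factoring out the obvious symmetries---global rephasing of $a$ and of $b$, cyclic shifts, complex conjugation---one is left with a system that is effectively determined, consistent with the isolated solutions that the numerics exhibit, and for $d$ where no simultaneously-circulant pair exists one relaxes to the nearest available structure (e.g.\ a circulant block paired with a mildly perturbed one).

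Second, for a fixed $d$ I would compute a high-precision numerical solution $x_0$ of this square polynomial system $F(x)=0$ and then certify it rigorously. This is exactly where a \textbf{Newton--Kantorovich theorem} enters: if at $x_0$ the Jacobian $DF(x_0)$ is invertible, the Newton residual $\|DF(x_0)^{-1}F(x_0)\|$ is small, and $DF$ obeys an explicit Lipschitz bound on a ball around $x_0$, then the Newton iteration converges to a genuine zero $x^\star$ within a quantifiable distance of $x_0$. All three hypotheses are finite computations that can be discharged with interval arithmetic; running this for each $d\le 165$ upgrades the numerical frames to honest ETFs and proves the conjecture in that range. Repeating the numerics (without the certification) out to $d\le 1500$ supplies the corroborating evidence.

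The real obstacle is the passage from ``every $d\le 165$'' to ``every $d$.'' The Newton--Kantorovich machinery is inherently case-by-case: both the dimension of the variety of admissible generating pairs and the degrees of the defining polynomials grow with $d$, so no finite certification can cover all $d$ simultaneously. What is missing is a \emph{uniform} reason the solution variety is nonempty---either a closed-form family $a=a(d)$, $b=b(d)$ satisfying the system identically (none is visible in the data, and the fact that the two blocks are apparently not always simultaneously circulant is itself a warning that a single clean parametrization may not exist), or a soft existence argument, for instance an asymptotic or perturbative construction valid for large $d$ combined with a dimension or degree count forcing the complex variety to be nonempty. Producing such an argument, and reconciling it with the exceptional $d$ where the circulant-pair structure degrades, is the step I expect to be hard; the explicit family and the certified small cases should be read as the evidence motivating that attack.
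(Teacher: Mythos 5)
First, note that the statement you were asked about is a \emph{conjecture}: the paper does not prove it, and your proposal correctly does not claim to either. What you have written is essentially the paper's actual program for partial progress --- a $2$-circulant ansatz (Conjecture~\ref{conj.medium d by 2d}), reduction to a finite polynomial system on the two generators (Lemma~\ref{lem.2-circ constraints}), numerical solutions for $d\le 1500$, and rigorous certification via a Newton--Kantorovich theorem with interval arithmetic for $d\le 165$ (Theorems~\ref{thm.secant jacobian} and~\ref{thm.strong d by 2d for small d}) --- together with an accurate assessment that no uniform argument is known that closes the gap to all $d$.

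The one substantive place where your plan diverges from what actually works is the claim that, after quotienting by symmetries, the system is ``effectively determined'' with isolated solutions and an invertible Jacobian. The paper's variable/constraint count goes the other way: there are $4d+2$ real unknowns against $2d+\lfloor\frac{d}{2}\rfloor+2$ real constraints, i.e.\ $\lceil\frac{3d}{2}\rceil$ \emph{more} variables than equations, and even after modding out the $(d+1)$-dimensional torus of switching symmetries the expected solution set has dimension $\lceil\frac{3d}{2}\rceil-(d+1)$, which grows with $d$ (see \eqref{eq.predicted dimension}). Consequently the certification cannot invoke the square-system Newton--Kantorovich you describe (invertible $DF(x_0)$, small $\|DF(x_0)^{-1}F(x_0)\|$); one needs the underdetermined variant in which a right approximate inverse $T$ --- in practice a pseudoinverse of a secant approximation to the Jacobian --- replaces $DF(x_0)^{-1}$, and the conclusion is a nearby zero lying on a manifold of dimension $m-n$ rather than an isolated root. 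This is exactly what Theorem~\ref{thm.secant jacobian} provides, and the excess dimension is what motivates the strong conjecture (Conjecture~\ref{conj.strong d by 2d}). Relatedly, your fallback of ``relaxing to the nearest available structure'' when no circulant pair exists is unnecessary on the evidence: the paper finds numerical $2$-circulant solutions from every random initialization for all $d\le 1500$ and certifies exact ones for all $d\le 165$; the only anomaly is $d=4$, where the naive dimension count fails because the $4\times 8$ ETF appears to be unique up to switching equivalence (Conjecture~\ref{conj.4}), not because circulant structure is unavailable.
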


In this paper, which can be viewed as a sequel to~\cite{FallonI:23}, we make partial progress on (a couple of strengthenings of) this conjecture.

\subsection{Outline}

In Section~\ref{sec.background}, we review what is currently known about $d\times 2d$ ETFs.
Until the recent breakthrough of Fallon and Iverson~\cite{FallonI:23}, almost all of these ETFs were constructed in terms of conference matrices.
The new \textit{ETF doubling} construction in~\cite{FallonI:23} revealed that many more $d\times 2d$ ETFs exist, and this prompted Fallon and Iverson to pose the (weak) $d\times 2d$ conjecture. 
In Section~\ref{sec.doubling conference graphs}, we present a new ETF construction that instead doubles conference graphs.
Our approach produces infinitely many ETFs with new parameters, including the first known $17\times 34$ ETF, for example.
Next, Section~\ref{sec.multi generator} observes that many of our new ETFs are \textit{$2$-circulant}, meaning they consist of two $d\times d$ circulant matrices sitting side-by-side to form a single $d\times 2d$ matrix.
This leads us to pose a \textit{medium $d\times 2d$ conjecture} that for every $d$, there exists a $2$-circulant $d\times 2d$ ETF.
We report numerical constructions that corroborate this conjecture for $d\leq 1500$.
We also develop techniques to detect whether a given ETF is equivalent to one with this extra $2$-circulant structure.
We then apply these techniques in Section~\ref{sec.infinite families} to identify a few infinite families of $2$-circulant ETFs.
As an attempt to make further progress on this strengthened conjecture, we count the defining polynomials of $2$-circulant ETFs in Section~\ref{sec.continua}, and we observe that there are $\lceil \frac{3d}{2}\rceil$ \textit{more} real variables than real constraints.
This prompts a further strengthening of the $d\times 2d$ conjecture that for every $d\neq4$, there exists a $\lceil\frac{3d}{2}\rceil$-dimensional manifold of $2$-circulant ETFs.
(We exclude $d=4$ since it appears that the $4\times 8$ ETF is unique up to equivalence, and so the naive dimension count fails in this sporadic case.)
We then use a variant of the Newton--Kantorovich technology laid out by Cohn, Kumar, and Minton in~\cite{CohnKM:16} to verify this \textit{strong $d\times 2d$ conjecture} for every $d\leq 165$.
We conclude in Section~\ref{sec.discussion} with a summary of the current state of play for $d\leq150$ (as displayed in Table~\ref{table.first 150}) and a discussion of various opportunities for future work.

\subsection{Notation and nomenclature}

It is often convenient to work with matrices in which rows and columns are indexed by sets other than $[n]:=\{1,\ldots,n\}$.
For example, the rows and columns of an $n\times n$ circulant matrix are naturally indexed by the cyclic group $C_n$.
In such cases, given finite nonempty sets $A$ and $B$, we let $\mathbb{C}^{A\times B}$ denote the set of complex $|A|\times|B|$ matrices with rows indexed by $A$ and columns indexed by $B$.
Sometimes, we need to work with block matrices, in which case, for example, $(\mathbb{C}^{m\times n})^{p\times q}$ denotes the set of complex matrices that consist of a $p\times q$ array of $m\times n$ blocks.
Throughout, $I$ denotes the identity matrix and $J$ denotes the all-ones matrix, and the size of these matrices will be clear from context.
Similarly, $\mathbf{1}$ denotes the all-ones vector, although $\mathbf{1}_{\{P\}}$ denotes a scalar that equals $1$ when $P$ holds, and otherwise equals $0$.
Throughout, we denote $\mathrm{i}:=\sqrt{-1}$, which should not be confused with the index $i$ (mind change in font).

We frequently identify a sequence of vectors $\{\varphi_i\}_{i\in [n]}$ in $\mathbb{C}^d$ with the matrix $\Phi\in\mathbb{C}^{d\times n}$ whose $i$th column is $\varphi_i$.
The corresponding Gram matrix $\Phi^*\Phi$ consists of all $n^2$ inner products between $\{\varphi_i\}_{i\in [n]}$.
Our inner products are conjugate-linear in the left (not right!)\ entry so that $(\Phi^*\Phi)_{i,j}=\langle \varphi_i,\varphi_j\rangle$.
If $\Phi$ is a $d\times n$ ETF with $n>d$, then we may write $\Phi^*\Phi=I+\gamma S$, where $\gamma$ is the common modulus of the off-diagonal entries of $\Phi^*\Phi$.
Here, $S$ is known as the \textit{signature matrix} of $\Phi$.
In the case where $\Phi^*\Phi$ is real, $S$ is specified by a graph $G$ with vertex set $[n]$ in which adjacency between $i,j\in[n]$ is determined by value of $S_{i,j}$.
(We use the convention that $i\leftrightarrow j$ when $S_{i,j}=1$, but this choice will not matter.)
Here, $S$ is known as the \textit{Seidel adjacency matrix} of $G$.
Given a $d\times n$ ETF $\Phi$ with $n>d$ and signature matrix $S$, it holds that $-S$ is also a signature matrix for a class of $(n-d)\times n$ ETFs known as the \textit{Naimark complement} of $\Phi$.

Note that given an ETF, one may apply a unitary transformation on the left while phasing and reindexing the ETF vectors to obtain another ETF.
When two vector sequences are related in this way, we say they are \textit{switching equivalent}.
Thus, two ETFs are switching equivalent if and only if they represent the same multiset of lines.
In addition, switching equivalent ETFs are said to have switching equivalent Gram matrices and switching equivalent signature matrices.

\section{Background}
\label{sec.background}

In this section, we review what is currently known about $d\times 2d$ ETFs.

\subsection{Classical constructions}
\label{subsec.classic constructions}

The first constructions of $d\times 2d$ ETFs were based on \textbf{conference matrices}, that is, $n\times n$ matrices $C$ with $0s$ on the diagonal and $\pm1s$ on the off-diagonal such that $C^\top C=(n-1)I$.
Confrence matrices were introduced by Belevitch~\cite{Belevitch:50} in 1950 in the context of telephone communication.
Two conference matrices are said to be \textit{equivalent} if negating rows and columns of one produces the other.
Delsarte, Goethals, and Seidel~\cite{DelsarteGS:71} showed that every conference matrix with $n\equiv 2\bmod 4$ is equivalent to a symmetric conference matrix, while every conference matrix with $n\equiv 0\bmod 4$ is equivalent to a skew-symmetric conference matrix.
In either case, one may multiply by diagonal matrices of $\pm1$s to obtain a conference matrix of the form
\begin{equation}
\label{eq.sym of skew conf}
\left[\begin{array}{rl}
0&\mathbf{1}^\top\\
\mathbf{1}&S
\end{array}\right]
\qquad
\text{or}
\qquad
\left[\begin{array}{rl}
0&\mathbf{1}^\top\\
-\mathbf{1}&T
\end{array}\right],
\end{equation}
where $S$ is symmetric and $T$ is anti-symmetric.
In the former case, $S$ is the Seidel adjacency matrix of a \textbf{conference graph}, that is, a strongly regular graph on $v$ vertices such that every vertex has $k=\frac{v-1}{2}$ neighbors, every pair of adjacent vertices has $\lambda=\frac{v-5}{4}$ common neighbors, and every pair of distinct nonadjacent vertices has $\mu=\frac{v-1}{4}$ common neighbors.
In the latter case, one might say that $T$ is the Seidel adjacency matrix of a \textit{conference digraph}, though this nomenclature is not standard.

The construction of ETFs from conference matrices differs slightly between the symmetric and skew-symmetric cases; see Example~4.3 in~\cite{LemmensS:73} and Example~5.8 in~\cite{DelsarteGS:75}, respectively.
Given a symmetric conference matrix $C$, it holds that $C^2=(n-1)I$, and so the eigenvalues of $C$ reside in $\{\pm\sqrt{n-1}\}$.
Furthermore, since $C$ has trace zero, both eigenvalues have multiplicity $\frac{n}{2}$.
It follows that $I+\frac{1}{\sqrt{n-1}}C$ is the Gram matrix of a real $\frac{n}{2}\times n$ ETF.
Meanwhile, given a skew-symmetric conference matrix $C$, then $\mathrm{i}C$ is self-adjoint with $(\mathrm{i}C)^2=(n-1)I$, and so a similar argument gives that $I+\frac{\mathrm{i}}{\sqrt{n-1}}C$ is the Gram matrix of a complex $\frac{n}{2}\times n$ ETF.
In the sequel, we use $F_v$ to label any conference (di)graph on $v$ vertices, and we label any corresponding (anti-)symmetric conference matrix (along with the corresponding ETFs) by $F_v+1$.

\begin{example}
\label{ex.paley conference matrices}
Given a prime power $q\equiv 1\bmod 4$, the corresponding \textbf{Paley graph} has vertex set $\mathbb{F}_q$ with adjacency $\alpha\leftrightarrow\beta$ precisely when the difference $\alpha-\beta$ is a nonzero quadratic residue.
Note that the condition $q\equiv 1\bmod 4$ ensures that $-1$ is a quadratic residue so that the Paley graph is not directed.
It turns out that the Paley graph is a conference graph.
In the case where $q\equiv 3\bmod 4$, it holds that $-1$ is \textit{not} a quadratic residue, and so we may define the \textit{Paley digraph} by taking $\alpha\rightarrow\beta$ precisely when $\alpha-\beta$ is a nonzero quadratic residue.
It turns out that the Paley digraph is a conference digraph.
(The remaining case where $q$ is an even prime power is not interesting here since \textit{every} member of $\mathbb{F}_q^\times$ is a quadratic residue.)
Throughout, we let $G_q$ denote the Paley (di)graph of order $q$, and we label the resulting \textit{Paley conference matrices} (along with their negatives and the corresponding \textit{Paley ETFs}) by $G_q+1$.
In his PhD thesis, Zauner~\cite{Zauner:99} constructed an explicit $\frac{q+1}{2}\times (q+1)$ ETF for each odd prime power $q$ in terms of additive characters of $\mathbb{F}_q$, and then Et-Taoui~\cite{EtTaoui:16} later identified these ETFs as Paley ETFs by computing the corresponding conference matrices.
\end{example}

\begin{example}
\label{ex.symplectic form construction}
Given an odd prime power $q$, a $2$-dimensional vector space $V$ over $\mathbb{F}_q$ with symplectic form $[\cdot,\cdot]\colon V\times V\to\mathbb{F}_q$, and a choice of nonzero 
vectors $t_1,\ldots,t_{q+1}\in V$ that span distinct lines in $V$, consider the matrix $C$ defined by
\[
C_{i,j}
=\chi([t_i,t_j]),
\]
where $\chi\colon\mathbb{F}_q\to\mathbb{C}$ denotes the Legendre symbol.
Then $C$ is a conference matrix that is symmetric for $q\equiv 1\bmod 4$ and skew-symmetric for $q\equiv 3\bmod 4$.
See~\cite{Cameron:91} for a related construction.
\end{example}

Considering their use of $\mathbb{F}_q$, one might expect the conference matrices in Example~\ref{ex.symplectic form construction} to be equivalent in some sense to the Paley conference matrices in Example~\ref{ex.paley conference matrices}.
This is made explicit in the following folk result, which we prove in the appendix.

\begin{proposition}
\label{prop.paley conference matrices are the equivalent}
Take any odd prime power $q$.
\begin{itemize}
\item[(a)]
There exists a symplectic form on $\mathbb{F}_q^2$ and a choice of vectors $t_1,\ldots,t_{q+1}\in\mathbb{F}_q^2$ such that the conference matrix described in Example~\ref{ex.symplectic form construction} is precisely the Paley conference matrix of order $q+1$.
\item[(b)]
The conference matrices of order $q+1$ described in Example~\ref{ex.symplectic form construction} are all switching equivalent.
\end{itemize}
\end{proposition}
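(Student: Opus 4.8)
The plan is to realize the Paley conference matrix explicitly as an instance of the symplectic construction, and then to reduce an arbitrary instance of that construction to this one. For part (a), I would take $V=\mathbb{F}_q^2$ equipped with the symplectic form $[(x_1,x_2),(y_1,y_2)]:=x_2y_1-x_1y_2$, together with the $q+1$ vectors $t_\infty:=(0,1)$ and $t_\alpha:=(1,\alpha)$ for $\alpha\in\mathbb{F}_q$, indexed by $\{\infty\}\cup\mathbb{F}_q$. These are nonzero and span the $q+1$ distinct lines of $V$, and a one-line computation gives $[t_\alpha,t_\beta]=\alpha-\beta$, $[t_\alpha,t_\infty]=-1$, $[t_\infty,t_\alpha]=1$, and a vanishing diagonal (the form is alternating). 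Hence the matrix $C$ of Example~\ref{ex.symplectic form construction} has $C_{\infty,\infty}=0$, $C_{\infty,\alpha}=1$, $C_{\alpha,\infty}=\chi(-1)$, and $C_{\alpha,\beta}=\chi(\alpha-\beta)$. Since $\chi(-1)=1$ when $q\equiv1\bmod4$ and $\chi(-1)=-1$ when $q\equiv3\bmod4$ (which also matches the symmetric/skew-symmetric dichotomy recorded in Example~\ref{ex.symplectic form construction}), and since $(\chi(\alpha-\beta))_{\alpha,\beta\in\mathbb{F}_q}$ is exactly the Seidel adjacency matrix of the Paley (di)graph $G_q$ in the convention of Example~\ref{ex.paley conference matrices}, the matrix $C$ is precisely $G_q+1$ in the form displayed in \eqref{eq.sym of skew conf}.

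For part (b), I would prove the sharper statement that, for a fixed $q$, every conference matrix arising from Example~\ref{ex.symplectic form construction} is obtained from the matrix of part (a) by reindexing and by negating some rows together with the corresponding columns; this is in particular a switching equivalence of the associated ETFs. First, any two-dimensional space over $\mathbb{F}_q$ is isomorphic to $\mathbb{F}_q^2$ by a map carrying both the symplectic form and the chosen vectors along, so we may assume $V=\mathbb{F}_q^2$. The space of alternating bilinear forms on $\mathbb{F}_q^2$ is one-dimensional, so the given symplectic form equals $c\,[\cdot,\cdot]$ for some $c\in\mathbb{F}_q^\times$, where $[\cdot,\cdot]$ is the form from part (a). Choosing $g\in\mathrm{GL}_2(\mathbb{F}_q)$ with $\det g=c^{-1}$ (possible since $\det$ is onto $\mathbb{F}_q^\times$) and using $[gu,gv]=\det(g)\,[u,v]$, one gets $c\,[t_i,t_j]=c^2\,[gt_i,gt_j]$ and hence $\chi(c\,[t_i,t_j])=\chi([gt_i,gt_j])$. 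Thus the data $(c\,[\cdot,\cdot],(t_i)_i)$ and $([\cdot,\cdot],(gt_i)_i)$ produce the identical conference matrix, so we may assume the form is $[\cdot,\cdot]$ itself.

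With the form normalized, the only remaining freedom is the $(q+1)$-tuple of vectors, which must span the $q+1$ lines of $\mathbb{F}_q^2$. Any such tuple agrees with the canonical tuple $u_\infty:=(0,1)$, $u_\alpha:=(1,\alpha)$ up to a permutation $\sigma$ and a rescaling $t_i=\lambda_i u_{\sigma(i)}$ with $\lambda_i\in\mathbb{F}_q^\times$. Since $[\lambda_i u_{\sigma(i)},\lambda_j u_{\sigma(j)}]=\lambda_i\lambda_j\,[u_{\sigma(i)},u_{\sigma(j)}]$ and $\chi$ is multiplicative, the resulting conference matrix is obtained from the canonical one by reindexing via $\sigma$ and then negating the $i$th row and column precisely for those $i$ with $\chi(\lambda_i)=-1$. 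Composing the two reductions shows that every order-$(q+1)$ conference matrix of Example~\ref{ex.symplectic form construction} is equivalent to $G_q+1$; this proves (b), and together with (a) it yields the folk identification of the symplectic construction with the Paley construction.

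All the computations above are routine, and the one step that repays a little care is the normalization of the symplectic form. The naive attempt to absorb the scalar $c$ by rescaling the individual $t_i$'s breaks down when $c$ is a non-square, because no sign pattern $(\varepsilon_i)$ can satisfy $\varepsilon_i\varepsilon_j=-1$ for all $i\ne j$ once $q+1\ge3$; the remedy is to absorb $c$ into a change of basis of $\mathbb{F}_q^2$ via the surjection $\det\colon\mathrm{GL}_2(\mathbb{F}_q)\to\mathbb{F}_q^\times$. (Alternatively, the non-square case can be handled directly using the fact that $x\mapsto nx$ for a non-square $n$ carries the Paley graph/tournament onto its complement/converse, which is precisely why $G_q+1$ is equivalent to its own negative.)
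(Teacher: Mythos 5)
Your proposal is correct and follows essentially the same route as the paper's appendix proof: part (a) exhibits an explicit symplectic form and line representatives realizing the Paley matrix, and part (b) reduces any instance to that one via the equivalence of symplectic forms on a $2$-dimensional space together with rescaling of the chosen vectors, with $\chi$ converting scalars into row/column sign changes. The only difference is cosmetic: you justify the normalization of the form by the one-dimensionality of alternating forms plus surjectivity of $\det$, where the paper simply invokes the existence of a symplectic basis.
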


We conclude this subsection by commenting on how highly symmetric the ETFs arising from conference matrices tend to be.
Here, it is more appropriate to consider the \textit{lines} spanned by the ETF vectors rather than the vectors themselves.
First, Taylor~\cite{Taylor:92} showed that the Paley ETFs with $q\equiv 1\bmod 4$ are \textit{doubly transitive} in the sense that every ordered pair of distinct ETF lines can be mapped to any other ordered pair of distinct ETF lines by way of a unitary transformation that simultaneously permutes all of the ETF lines.
Later, Iverson and Mixon~\cite{IversonM:24} showed that the Paley ETFs with $q\equiv 3\bmod 4$ are also doubly transitive.
Finally, Et-Taoui~\cite{EtTaoui:00} showed that for any ETF arising from a skew-symmetric conference matrix, every unordered triple of ETF lines can be unitarily transformed to any other unordered triple of ETF lines, though not necessarily in a way that permutes the other lines.
See Theorem~3.10 in~\cite{HoffmanS:12} for a related result.

\subsection{Other ETFs of Paley size}
\label{subsec.other etfs of paley size}

In the previous subsection, every ETF signature matrix arising from a conference matrix has an off-diagonal consisting of fourth roots of unity.
To find new ETFs, one might hunt for additional ETF signature matrices with fourth roots of unity.
After modding out by the symmetries in the problem, Duncan, Hoffman, and Solazzo~\cite{DuncanHS:10} computationally discovered new $d\times 2d$ ETFs of this form with $d\in\{3, 5, 6, 7, 9\}$.
Later, Et-Taoui~\cite{EtTaoui:16} discovered a continuum of $d\times 2d$ ETFs that unified each of these ETFs (save the $d=6$ case) with the Paley ETFs.
In particular, every Paley conference matrix with $q=2d-1\equiv 1\bmod 4$ can be rearranged to take the form
\[
\left[\begin{array}{cccc}
0\phantom{^\top} & \phantom{-}1\phantom{^\top} & \phantom{-}\mathbf{1}^\top & \phantom{-}\mathbf{1}^\top \\
1\phantom{^\top} & \phantom{-}0\phantom{^\top} & -\mathbf{1}^\top & \phantom{-}\mathbf{1}^\top \\
\mathbf{1}\phantom{^\top} & -\mathbf{1}\phantom{^\top} & \phantom{-}A\phantom{^\top} & \phantom{-}B\phantom{^\top} \\
\mathbf{1}\phantom{^\top} & \phantom{-}\mathbf{1}\phantom{^\top} & \phantom{-}B^\top & -A\phantom{^\top}
\end{array}\right],
\]
and multiplying $B$ and $B^\top$ by \textit{any} unimodular complex scalar and its complex conjugate (respectively) results in a complex ETF signature matrix.

\subsection{Recent breakthrough using doubling}

Until recently, almost every known complex $d\times 2d$ ETF could be constructed (up to switching) from a real $2d\times 2d$ conference matrix using the techniques discussed in the previous two subsections.
(The only exceptional case was the complex $6\times 12$ ETF discovered by Duncan, Hoffman, and Solazzo~\cite{DuncanHS:10}.)
This state of affairs changed dramatically with a recent construction due to Fallon and Iverson~\cite{FallonI:23}.
To motivate this construction, first observe that skew-symmetric conference matrices are closed under the following operation:
\[
C
\mapsto
\left[\begin{array}{cc}
C & C+I\\
C-I & -C
\end{array}\right].
\]
This \textit{doubling} operation was first stated by J.\ Wallis~\cite{Wallis:71} in 1971 in the context of skew Hadamard matrices.
One may express doubling in terms of the corresponding ETF signature matrices as
\begin{equation}
\label{eq.first signature double}
S
\mapsto
\left[\begin{array}{cc}
S & S+\mathrm{i}I\\
S-\mathrm{i}I & -S
\end{array}\right].
\end{equation}
This operation is generalized in the following result.

\begin{proposition}[Theorem~6 in~\cite{FallonI:23}]
\label{prop.etf doubling}
Let $S$ denote the signature matrix of a complex $d\times n$ ETF, where $n=2d+k$ for some $k\in\{-1,0,1\}$.
Pick $\varepsilon\in\{\pm1\}$ and take
\[
c:=(n-2d) \sqrt{\frac{n-1}{d ( n-d ) }},
\qquad
\beta:=-c+\varepsilon \mathrm{i}\sqrt{1-c^2}.
\]
Then
\begin{equation}
\label{eq.doubled ETF signature matrix}
\left[\begin{array}{cc}
S & S+\beta I\\
S+\overline\beta I & -S
\end{array}\right]
\end{equation}
is the signature matrix of a complex $n\times 2n$ ETF.
\end{proposition}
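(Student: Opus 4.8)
The plan is to invoke the standard spectral characterization of ETF signature matrices: a self-adjoint matrix with zero diagonal and off-diagonal entries all of modulus $1$ is the signature matrix of an ETF precisely when it has exactly two distinct eigenvalues, and if such an $N\times N$ matrix has positive eigenvalue of multiplicity $m$, then it is the signature matrix of an $m\times N$ ETF. (Indeed, if $\sigma$ is the modulus of the negative eigenvalue, then $I+\sigma^{-1}\tilde S$ equals a positive multiple of a rank-$m$ orthogonal projection, and any square-root factorization of it produces the ETF; conversely $\Phi^*\Phi$ has this form for every $m\times N$ ETF $\Phi$.) A key preliminary is to translate the hypothesis on $S$ into a quadratic identity: since $S$ is the signature matrix of a $d\times n$ ETF, the Gram matrix $G=I+\gamma S$ with $\gamma=\sqrt{(n-d)/(d(n-1))}$ the Welch bound equals $\tfrac{n}{d}$ times a rank-$d$ projection, so $G^2=\tfrac{n}{d}G$; expanding and simplifying yields
\[
S^2=(n-1)I+cS,\qquad c=(n-2d)\sqrt{\tfrac{n-1}{d(n-d)}}.
\]
In other words, the scalar $c$ in the statement is exactly the spectral parameter of $S$ itself.

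Next I would check the three routine properties of the matrix $\tilde S$ in \eqref{eq.doubled ETF signature matrix}. It is self-adjoint because $S^*=S$ while $(S+\beta I)^*=S+\overline\beta I$ occupies the transposed off-diagonal block; its diagonal vanishes because the diagonal blocks $S$ and $-S$ have zero diagonal, and the $\beta I$ and $\overline\beta I$ summands only perturb off-diagonal entries of $\tilde S$; and all off-diagonal entries of $\tilde S$ have modulus $1$ as soon as $|\beta|=1$. Now $|\beta|^2=c^2+(1-c^2)=1$ holds provided $c^2\le 1$, and this is where the hypothesis $n=2d+k$ with $k\in\{-1,0,1\}$ enters: for $k=0$ we have $c=0$, while for $k=\pm 1$ a short computation gives $c^2=2/(d\pm1)\le 1$ once $d\ge 2$ (the residual small cases being degenerate or trivial). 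Thus $\beta$ is a well-defined unimodular scalar with $\beta\overline\beta=1$ and $\beta+\overline\beta=-2c$.

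The heart of the argument is the block expansion of $\tilde S^2$, using $\beta\overline\beta=1$, $\beta+\overline\beta=-2c$, and the quadratic for $S$. The off-diagonal blocks are $S(S+\beta I)-(S+\beta I)S=0$ and its conjugate, hence vanish; the $(1,1)$ block is
\[
S^2+(S+\beta I)(S+\overline\beta I)=2S^2+(\beta+\overline\beta)S+I=2S^2-2cS+I=(2n-1)I,
\]
and the $(2,2)$ block equals the same by a symmetric computation. Therefore $\tilde S^2=(2n-1)I$, so the eigenvalues of $\tilde S$ lie in $\{\pm\sqrt{2n-1}\}$; since $\tilde S$ has zero diagonal it is not a scalar multiple of $I$, so it has exactly two eigenvalues, and since $\tilde S$ is $2n\times 2n$ with zero trace each has multiplicity $n$. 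By the characterization above, $\tilde S$ is the signature matrix of an $n\times 2n$ ETF, as claimed.

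I do not anticipate a serious obstacle: once one notices that $c$ plays the dual role of spectral parameter for $S$ and negated real part of $\beta$, and that the doubling lands in the balanced regime $N=2m$ where the signature matrix squares to $(N-1)I$, the computation collapses. The only steps that genuinely require care are confirming that the $c$ in the statement really is the spectral parameter of $S$ (rather than some rescaling of it), and the inequality $c^2\le 1$ securing $|\beta|=1$, which is special to the near-balanced parameters $n=2d+k$ and would fail for a general $d\times n$ ETF.
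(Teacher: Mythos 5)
Your proof is correct. Note that the paper does not actually prove this proposition---it is imported as Theorem~6 of~\cite{FallonI:23}---but your argument is exactly the strategy the authors use for their own analogous result (Theorem~\ref{thm.doubling signature}): reduce the ETF hypothesis to a quadratic identity for $S$, here $S^2=(n-1)I+cS$, and then verify block-by-block that the doubled matrix $\tilde S$ satisfies $\tilde S^2=(2n-1)I$, after which the zero trace forces eigenvalue multiplicities $n$ and $n$ and the spectral characterization of signature matrices finishes the job. Your identification of $c$ as simultaneously the spectral parameter of $S$ and $-\operatorname{Re}\beta$ is the right key observation, and the block computations check out. One harmless slip: in the case $k=-1$ (so $n=2d-1$) one gets $c^2=\frac{2(d-1)}{d(d-1)}=\frac{2}{d}$, not $2/(d-1)$; with the correct value the bound $c^2\le 1$ indeed holds for all $d\ge 2$, so the conclusion $|\beta|=1$ is unaffected.
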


\begin{example}
\label{ex.doubling the paley ETF}
If $S=\mathrm{i}C$ for some skew-symmetric conference matrix $C$ and $\varepsilon=1$, then the signature matrix~\eqref{eq.doubled ETF signature matrix} is identical to the result of~\eqref{eq.first signature double}.
By Proposition~\ref{prop.etf doubling}, we may also apply \eqref{eq.first signature double} to a \textit{real symmetric} conference matrix and obtain a \textit{complex} ETF signature matrix.
In the case where $S$ is the (real or complex) signature matrix of a Paley ETF $G_q+1$, we label this doubled signature matrix (and the corresponding ETFs) by $2\cdot (G_q+1)$.
\end{example}

\begin{example}
\label{ex.doubling the renes-strohmer ETF}
One may also apply Proposition~\ref{prop.etf doubling} to $d\times n$ ETFs with $n=2d\pm1$.
The best known example of such ETFs is the Renes--Strohmer ETF~\cite{Renes:07,Strohmer:08}.
Here, we take any $2d\times 2d$ skew-symmetric conference matrix $C$ of the form \eqref{eq.sym of skew conf}.
Since $T$ is skew-symmetric with trace zero, writing out $C^\top C=I$ in terms of $T$ reveals that $\mathrm{i}T$ is self-adjoint with three eigenspaces: the kernel spanned by $\mathbf{1}$, and two other eigenspaces of dimension $d-1$ with eigenvalues $\pm\sqrt{2d-1}$.
It follows that
\[
\mathrm{i}T+\frac{1}{\sqrt{2d-1}}\cdot J+\sqrt{2d-1}\cdot I
\]
is a multiple of the Gram matrix of a $d\times (2d-1)$ ETF, while the Naimark complement is a $d'\times(2d'+1)$ ETF with $d'=d-1$.
Overall, if there exists a conference digraph on $v$ vertices, then applying Proposition~\ref{prop.etf doubling} to the resulting Renes--Strohmer ETF produces a $v\times 2v$ ETF.
We label any such doubled ETF by $2\cdot F_{v}$ with $v\equiv 3\bmod 4$.
In the case where $C$ is a Paley conference matrix, meaning $T$ is the Seidel adjacency matrix of a Paley digraph, we label this doubled ETF by $2\cdot G_q$ with $q\equiv 3\bmod 4$.
\end{example}

\section{Doubling conference graphs}
\label{sec.doubling conference graphs}

In this section, we present a new doubling construction of $d\times 2d$ ETFs.

\begin{theorem}
\label{thm.doubling signature}
If there exists a conference graph on $v$ vertices, then there exists a complex $v\times 2v$ ETF.
Explicitly, let $A\in\mathbb{R}^{v\times v}$ denote the $\{0,1\}$-adjacency matrix of the conference graph, and let $B=J-I-A$ denote the adjacency matrix of the complementary graph.
Pick $\varepsilon\in\{\pm1\}$ and define
\[
x:=\frac{-1+\sqrt{2v-1}}{v-1},
\qquad
y:=\sqrt{1-x^2},
\qquad
\beta:=\varepsilon x+\mathrm{i}y.
\]
Then
\begin{equation}
\label{eq.doubling conference graph to signature matrix}
\left[\begin{array}{cc}
A-B & \varepsilon I+\beta A+\overline{\beta}B\\
\varepsilon I+\overline{\beta} A+\beta B & B-A
\end{array}\right]
\end{equation}
is the signature matrix of a complex $v\times2v$ ETF.
\end{theorem}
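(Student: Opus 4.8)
The plan is a direct verification. Write the matrix in~\eqref{eq.doubling conference graph to signature matrix} as the $2\times2$ block matrix $M$ with diagonal blocks $D:=A-B$ and $-D$, and off-diagonal blocks $P:=\varepsilon I+\beta A+\overline{\beta}B$ (upper right) and $P^{*}$ (lower left). Recall that a self-adjoint $2v\times 2v$ matrix with zero diagonal and unimodular off-diagonal entries is the signature matrix of a $v\times 2v$ ETF exactly when it has only two eigenvalues, each of multiplicity $v$: the Welch bound for a $v\times 2v$ ETF is $\gamma=(2v-1)^{-1/2}$, so if $M$ has eigenvalues $\pm(2v-1)^{1/2}$ each with multiplicity $v$, then $I+\gamma M$ is positive semidefinite of rank $v$ with unit diagonal and all off-diagonal entries of modulus $\gamma$, hence is the Gram matrix of a $v\times 2v$ ETF whose signature matrix is $M$. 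Thus it suffices to show: (i) $M=M^{*}$ with zero diagonal, (ii) the off-diagonal entries of $M$ are unimodular, and (iii) $M^{2}=(2v-1)I$; by (i) we have $\operatorname{tr}M=0$, which together with (iii) forces the two eigenvalue multiplicities to be $v$ and $v$.

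Conditions (i) and (ii) are quick. Since $A$ and $B$ are $\{0,1\}$-matrices with $A+B=J-I$, the block $D=A-B$ is the $\pm1$ Seidel adjacency matrix of the conference graph, so it is real symmetric with zero diagonal and unimodular off-diagonal entries; the off-diagonal entries of the full matrix $M$ therefore lie in $\{\pm1,\beta,\overline{\beta}\}$, and the bound $\sqrt{2v-1}\le v$ (equivalently $(v-1)^{2}\ge0$) gives $0<x\le1$, so $y=\sqrt{1-x^{2}}$ is real and $|\beta|^{2}=x^{2}+y^{2}=1$. Because $A$ and $B$ are real symmetric and $\varepsilon$ is real, $P^{*}=\varepsilon I+\overline{\beta}A+\beta B$ is exactly the lower-left block of $M$, so $M=M^{*}$; and $M$ has zero diagonal since each diagonal block does.

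For (iii), the block form of $M^{2}$ shows it suffices to prove $DP=PD$ and $D^{2}+PP^{*}=D^{2}+P^{*}P=(2v-1)I$. Here I would use the standard identities for a conference graph with parameters $(v,k,\lambda,\mu):=\bigl(v,\tfrac{v-1}{2},\tfrac{v-5}{4},\tfrac{v-1}{4}\bigr)$: one has $A^{2}=kI+\lambda A+\mu B$, the complementary graph is again a conference graph with the same parameters so $B^{2}=kI+\mu A+\lambda B$, and $AJ=JA=kJ$, whence (using $\mu=\lambda+1$) $AB=BA=\mu(J-I)$. Commutativity of $A$ and $B$ makes $DP=PD$ and $PP^{*}=P^{*}P$ immediate and collapses $D^{2}=A^{2}+B^{2}-2AB$ to $D^{2}=vI-J$. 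Expanding $PP^{*}=I+\varepsilon(\beta+\overline{\beta})(A+B)+A^{2}+B^{2}+(\beta^{2}+\overline{\beta}^{2})AB$, substituting $\beta+\overline{\beta}=2\varepsilon x$, $\beta^{2}+\overline{\beta}^{2}=2(x^{2}-y^{2})=4x^{2}-2$, and the conference-graph identities, and adding $D^{2}$, one finds that $D^{2}+PP^{*}$ has the shape $(v+2k)I+\bigl((v-1)x^{2}+2x-2\bigr)(J-I)=(2v-1)I+\bigl((v-1)x^{2}+2x-2\bigr)(J-I)$. The coefficient of $J-I$ vanishes because $x=\tfrac{-1+\sqrt{2v-1}}{v-1}$ is precisely the positive root of $(v-1)x^{2}+2x-2=0$; this is the point of the defining formula for $x$, and it finishes the proof.

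The only real obstacle is bookkeeping in this last step: one must push the complex scalars $\beta,\overline{\beta}$ and the sign $\varepsilon$ through the product $PP^{*}$ while repeatedly reducing $A^{2}$, $B^{2}$, and $AB$ via the conference-graph identities, and then recognize the surviving quadratic in $x$. There is no conceptual difficulty once those identities and the value of $x$ are in hand: the choices $\beta=\varepsilon x+\mathrm{i}y$ and $y=\sqrt{1-x^{2}}$ are engineered so that $|\beta|=1$ and $\operatorname{Re}(\beta^{2})=2x^{2}-1$, and $x$ is engineered so that the $(J-I)$-term cancels, leaving a multiple of the identity with the correct scalar $2v-1$.
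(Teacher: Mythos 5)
Your proposal is correct and follows essentially the same route as the paper: both reduce the claim to verifying $M^2=(2v-1)I$ (plus the easy checks of self-adjointness, zero diagonal, unimodularity, and zero trace, which the paper leaves implicit) and both carry out the block computation in the commutative adjacency algebra spanned by $\{I,A,B\}$ using the strongly regular graph intersection numbers, arriving at the same surviving coefficient $(v-1)x^2+2x-2$ that vanishes by the choice of $x$. Your bookkeeping via $(k,\lambda,\mu)$, $AB=\mu(J-I)$, and $D^2=vI-J$ is just a tidier packaging of the same identities the paper writes out explicitly.
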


Note that $x\in[0,1]$ as a consequence of $v^2-2v+1=(v-1)^2\geq0$, i.e., $2v-1\leq v^2$.
It follows that $y\in[0,1]$ and $|\beta|=1$.
Also, the Naimark complementary signature matrix is obtained by swapping $A\leftrightarrow B$ and $\varepsilon\leftrightarrow-\varepsilon$.
We label the ETFs constructed by doubling a conference graph $F_v$ by $2\cdot F_v$ with $v\equiv 1\bmod 4$, extending our notation for the doubled Renes--Strohmer ETF in Example~\ref{ex.doubling the renes-strohmer ETF}.

\begin{proof}[Proof of Theorem~\ref{thm.doubling signature}]
Let $S$ denote the signature matrix \eqref{eq.doubling conference graph to signature matrix}.
It suffices to verify $S^2=(2v-1)I$.
The blocks of $S$ reside in the complex commutative $*$-algebra spanned by $\{I,A,B\}$, so we start by recalling the structure constants (also known as \textit{intersection numbers}) of this adjacency algebra~\cite{Godsil:10}:
\begin{align*}
A^2&=\frac{1}{4}\Big( (2v-2)I+(v-5)A+(v-1)B \Big),\\
AB=BA&=\frac{1}{4}\Big( (v-1)A+(v-1)B \Big),\\
B^2&=\frac{1}{4}\Big( (2v-2)I+(v-1)A+(v-5)B\Big).
\end{align*}
(These can be verified using the fact that a conference graph on $v$ vertices is strongly regular with parameters $k=\frac{v-1}{2}$, $\lambda=\frac{v-5}{4}$, and $\mu=\frac{v-1}{4}$.)
We will use these relations along with $\beta\overline{\beta}=1$ to compute each block of $S^2\in(\mathbb{C}^{v\times v})^{2\times2}$.
First, a lengthly but straighforward calculation gives
\begin{align*}
[S^2]_{11}
&=(A-B)(A-B)+(\varepsilon I+\beta A+\overline{\beta}B)(\varepsilon I+\overline{\beta}A+\beta B)\\
&=(2v-1)I+\frac{1}{2}\Big(v-5+(v-1)\operatorname{Re}[\beta^2]+4\varepsilon\operatorname{Re}[\beta]\Big)(A+B).
\end{align*}
We simplify this further using the facts that $\operatorname{Re}[\beta]=\varepsilon x$ and
\[
\operatorname{Re}[\beta^2]
=\operatorname{Re}[(\varepsilon x+iy)^2]
=\operatorname{Re}[x^2-y^2+2\varepsilon xyi]
=x^2-y^2
=2x^2-1.
\]
In particular, the coefficient of $A+B$ above simplifies to
\[
\frac{1}{2}\Big(v-5+(v-1)\operatorname{Re}[\beta^2]+4\varepsilon\operatorname{Re}[\beta]\Big)
=(v-1)x^2+2x-2,
\]
which vanishes by our choice of $x$.
Thus, $[S^2]_{11}=(2v-1)I$.
Next,
\[
[S^2]_{22}
=(\varepsilon I+\overline{\beta}A+\beta B)(\varepsilon I+\beta A+\overline{\beta} B)+(B-A)(B-A)
=[S^2]_{11}
=(2v-1)I.
\]
Finally,
\[
[S^2]_{12}
=(A-B)(\varepsilon I+\beta A+\overline{\beta}B)+(\varepsilon I+\beta A+\overline{\beta}B)(B-A)
=0,
\]
and $[S^2]_{21}=([(S^2)^*]_{12})^*=([S^2]_{12})^*=0$.
Overall, $S^2=(2v-1)I$, as desired.
\end{proof}

\begin{example}
\label{eq.double paley graph}
Applying Theorem~\ref{thm.doubling signature} to the Paley graph produces a $q\times 2q$ ETF.
We label this ETF by $2\cdot G_q$ with $q\equiv1\bmod 4$, thereby extending our notation from Example~\ref{ex.doubling the renes-strohmer ETF}.
This already gives an ETF with new parameters when $q=17$.
\end{example}

\begin{corollary}
If there exists a real $d\times 2d$ ETF, then there exists a complex $D\times 2D$ ETF with $D=2d-1$.
\end{corollary}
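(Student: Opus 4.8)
The plan is to reduce to Theorem~\ref{thm.doubling signature} by extracting a conference graph on $v:=2d-1$ vertices from a real $d\times 2d$ ETF. Indeed, a real $d\times 2d$ ETF is switching equivalent to an $F_{2d-1}+1$ ETF, so its existence is equivalent to the existence of a conference graph on $2d-1$ vertices; feeding that graph into Theorem~\ref{thm.doubling signature} with $v=2d-1$ then produces a complex $v\times 2v$ ETF, which is precisely a complex $D\times 2D$ ETF with $D=2d-1$.

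To carry this out, I would first analyze the signature matrix $S$ of the given real $d\times 2d$ ETF $\Phi$. Since $\Phi$ is real, $\Phi^*\Phi=I+\gamma S$ is real, so $S$ is a real symmetric matrix with zero diagonal and $\pm1$ off-diagonal entries. Comparing the spectrum of $\Phi^*\Phi$ with that of the frame operator $\Phi\Phi^*=2I$ shows that $\Phi^*\Phi$ has eigenvalues $2$ and $0$, each of multiplicity $d$; since $\gamma=\frac{1}{\sqrt{2d-1}}$ when $n=2d$, it follows that $S$ has eigenvalues $\pm\sqrt{2d-1}$, each of multiplicity $d$, and hence $S^2=(2d-1)I$. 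Thus $S$ is a symmetric conference matrix of order $2d$. (In particular $2d\equiv2\bmod4$, so $d$ is odd and $2d-1\equiv1\bmod4$, which is exactly the congruence under which the conference-graph parameters are integral.)

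Next I would invoke the classical normalization recalled at the start of Section~\ref{subsec.classic constructions}: negating suitable rows and columns of $S$ — an operation that preserves the conference matrix property and corresponds to a switching equivalence on the ETF side — puts $S$ in the form $\left[\begin{smallmatrix}0&\mathbf 1^\top\\\mathbf 1&S'\end{smallmatrix}\right]$, where $S'$ is the Seidel adjacency matrix of a conference graph $F_{2d-1}$ on $2d-1$ vertices. Applying Theorem~\ref{thm.doubling signature} to $F_{2d-1}$ yields the signature matrix of a complex $(2d-1)\times 2(2d-1)$ ETF, i.e., a complex $D\times 2D$ ETF with $D=2d-1$, as desired.

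The only step with genuine content is the spectral computation identifying $S$ as a symmetric conference matrix, which is essentially the converse of the construction recalled in Section~\ref{subsec.classic constructions} and is routine; the normalization and the appeal to Theorem~\ref{thm.doubling signature} are bookkeeping, so I anticipate no real obstacle here.
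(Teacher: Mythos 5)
Your proposal is correct and follows essentially the same route as the paper: identify the signature matrix of the real $d\times 2d$ ETF as a symmetric conference matrix of order $2d$, switch it into normalized form to extract a conference graph on $2d-1$ vertices, and then apply Theorem~\ref{thm.doubling signature}. The paper's proof is a two-sentence version of exactly this argument, and your spectral computation correctly fills in the details it leaves implicit.
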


\begin{proof}
The signature matrix of a real $d\times 2d$ ETF is switching equivalent to the Seidel adjacency matrix of a conference graph union an isolated vertex, meaning there exists a conference graph on $2d-1$ vertices.
The result then follows from Theorem~\ref{thm.doubling signature}.
\end{proof}

The following corollary summarizes both cases of the $2\cdot F_v$ construction of ETFs (see Example~\ref{ex.doubling the renes-strohmer ETF} and the paragraph after Theorem~\ref{thm.doubling signature}) in terms of conference matrices of order $n=v+1$.

\begin{corollary}
If there exists a conference matrix of order $n$, then there exists a complex $d\times 2d$ ETF with $d=n-1$.
\end{corollary}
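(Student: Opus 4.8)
The plan is to reduce the claim to the two $2\cdot F_v$ doubling constructions already in hand, splitting on the congruence class of $n$. By the Delsarte--Goethals--Seidel structure theorem recalled in Section~\ref{subsec.classic constructions}~\cite{DelsarteGS:71}, a conference matrix of order $n$ is equivalent to a symmetric conference matrix when $n\equiv 2\bmod 4$ and to a skew-symmetric conference matrix when $n\equiv 0\bmod 4$; these two congruence classes are exhaustive because a conference matrix of order $n$ necessarily has $n$ even (for any two distinct rows, the vanishing of their inner product is a sum of $n-2$ entries each equal to $\pm1$), the cases $n\le 2$ being trivial. So I would treat the two cases separately and show that each produces a complex $v\times 2v$ ETF with $v=n-1$, which is the asserted $d\times 2d$ ETF with $d=n-1$.

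For $n\equiv 2\bmod 4$, I would first conjugate the symmetric conference matrix by diagonal $\pm1$ matrices to bring it to the normalized form on the left of~\eqref{eq.sym of skew conf}, so that its lower-right block is the Seidel adjacency matrix of a conference graph on $v=n-1$ vertices (automatically $v\equiv 1\bmod 4$, as required for a conference graph to exist). Then I would apply Theorem~\ref{thm.doubling signature} to this conference graph, obtaining the $2\cdot F_v$ signature matrix and hence a complex $v\times 2v$ ETF; setting $d:=v=n-1$ closes this case.

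For $n\equiv 0\bmod 4$, the same sign-normalization puts the skew-symmetric conference matrix in the form on the right of~\eqref{eq.sym of skew conf}, whose anti-symmetric lower-right block $T$ encodes a conference digraph on $v=n-1$ vertices (automatically $v\equiv 3\bmod 4$). I would then quote Example~\ref{ex.doubling the renes-strohmer ETF}: this conference digraph yields a Renes--Strohmer ETF on $v=n-1$ vertices, and applying Proposition~\ref{prop.etf doubling} to it (the case $k=-1$) produces the complex $v\times 2v$ ETF labeled $2\cdot F_v$. Again $d:=v=n-1$, completing the argument.

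I do not expect a genuine obstacle here: the two doubling constructions already do all the work, so the proof is essentially bookkeeping. The only points that need care are (i) verifying that the two congruence classes really do exhaust all conference matrices (together with the trivial small cases), and (ii) in the skew-symmetric case, making sure the object being doubled is the Renes--Strohmer ETF built on all $v=n-1$ vertices -- so that the output dimension is $n-1$ -- rather than some smaller ETF read off directly from the conference matrix, which would give the wrong index.
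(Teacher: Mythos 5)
Your proposal is correct and matches the paper's own proof essentially step for step: both split on $n\bmod 4$ via the Delsarte--Goethals--Seidel equivalence, handle the symmetric case by normalizing to extract a conference graph on $n-1$ vertices and invoking Theorem~\ref{thm.doubling signature}, and handle the skew-symmetric case by doubling the Renes--Strohmer ETF via Proposition~\ref{prop.etf doubling} as in Example~\ref{ex.doubling the renes-strohmer ETF}. Your two points of care, including the identification $k=-1$ and the check that the doubled ETF has $n-1$ vectors per block, are accurate bookkeeping that the paper leaves implicit.
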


\begin{proof}
Delsarte, Goethals, and Seidel~\cite{DelsarteGS:71} showed that a conference matrix of order $n$ exists only if $n$ is even, and furthermore, any such matrix is equivalent to either a symmetric or a skew-symmetric matrix depending on whether $n \equiv 2$ or $0 \bmod 4$, respectively. 
In the symmetric case, after normalizing the first row and column, the core of a symmetric conference matrix is the Seidel adjacency matrix of a conference graph on $n-1$ vertices, and so Theorem~\ref{thm.doubling signature} delivers a complex $d\times 2d$ ETF with $d = n-1$. 
In the skew-symmetric case, one may apply Proposition~\ref{prop.etf doubling} to the resulting Renes--Strohmer ETF as in Example~\ref{ex.doubling the renes-strohmer ETF} to obtain a complex $d\times2d$ ETF with $d = n-1$.
\end{proof}

Next, we describe a complex $d\times 2d$ ETF whose signature matrix is given in Theorem~\ref{thm.doubling signature}.
We start with a helpful lemma.

\begin{lemma}
\label{lem.low rank extension}
Suppose $A$ is an invertible $k\times k$ matrix and $\operatorname{rank}\left(\left[\begin{smallmatrix}A&B\\C&D\end{smallmatrix}\right]\right)
=k$.
Then $D = C A^{-1} B$.
\end{lemma}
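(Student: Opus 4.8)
The plan is to exploit the rank hypothesis to express the bottom block rows as linear combinations of the top block rows. Since $\left[\begin{smallmatrix}A&B\\C&D\end{smallmatrix}\right]$ has rank $k$ and its first $k$ rows $\begin{bmatrix}A & B\end{bmatrix}$ already span a $k$-dimensional space (because $A$ is invertible, these rows are linearly independent), every row of $\begin{bmatrix}C & D\end{bmatrix}$ must lie in the row space of $\begin{bmatrix}A & B\end{bmatrix}$. Concretely, there is a matrix $M$ (of appropriate size) with $\begin{bmatrix}C & D\end{bmatrix} = M\begin{bmatrix}A & B\end{bmatrix}$, i.e., $C = MA$ and $D = MB$.

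From $C = MA$ and the invertibility of $A$ we immediately get $M = CA^{-1}$. Substituting into $D = MB$ yields $D = CA^{-1}B$, which is exactly the claim. The argument is short enough that I would write it essentially in this form, perhaps phrasing the first step as: append the rows of $\begin{bmatrix}C&D\end{bmatrix}$ one at a time to $\begin{bmatrix}A&B\end{bmatrix}$; since the rank does not increase past $k$ and the top $k$ rows are already independent, each appended row is a linear combination of the top rows.

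I do not expect a genuine obstacle here; the only point requiring a moment's care is justifying that the row space of the full matrix coincides with the row space of $\begin{bmatrix}A&B\end{bmatrix}$, which follows because the latter is a $k$-dimensional subspace contained in the former, and the former also has dimension $k$. One could alternatively argue via columns (using that the first $k$ columns $\left[\begin{smallmatrix}A\\C\end{smallmatrix}\right]$ have rank $k$, so the remaining columns $\left[\begin{smallmatrix}B\\D\end{smallmatrix}\right]$ are of the form $\left[\begin{smallmatrix}A\\C\end{smallmatrix}\right]N$ for some $N$, whence $B = AN$ gives $N = A^{-1}B$ and then $D = CN = CA^{-1}B$), but the row-space version seems cleanest to present.
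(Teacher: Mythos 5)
Your proof is correct and essentially matches the paper's, which runs the column-space version of your argument: since $A$ is invertible, the image of $\left[\begin{smallmatrix}A\\C\end{smallmatrix}\right]$ is $k$-dimensional and hence contains the image of $\left[\begin{smallmatrix}B\\D\end{smallmatrix}\right]$, giving $Bx=Ay$, $Dx=Cy$ with $y=A^{-1}Bx$. The row-space variant you lead with is just the transpose of this and is equally valid.
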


\begin{proof}
By assumption, the image of $\left[\begin{smallmatrix}B\\D\end{smallmatrix}\right]$ is contained in the image of $\left[\begin{smallmatrix}A\\C\end{smallmatrix}\right]$.
That is, for every appropriately sized vector $x$, there exists a vector $y$ such that $Bx = Ay$ and $Dx = Cy$.
Since $A$ is invertible, it necessarily holds that $y = A^{-1} B x$.
Thus, $D x = C A^{-1} B x$ for every $x$, and so $D = C A^{-1} B$.
\end{proof}

\begin{theorem}
\label{thm.short fat repn of doubling}
Let $A\in\mathbb{R}^{v\times v}$ denote the $\{0,1\}$-adjacency matrix of a conference graph on $v$ vertices, and let $B=J-I-A$ denote the adjacency matrix of the complementary graph.
Then there exist $a,b,c,d,e,f\in\mathbb{C}$ such that
\[
\left[\begin{array}{c|c}
aI+bA+cB & dI+eA+fB
\end{array}\right]
\]
is a $v\times 2v$ complex ETF.
Explicitly, given
\[
\gamma:=\frac{1}{\sqrt{2v-1}},
\qquad
k:=\frac{v-1}{2},
\qquad
r:=\frac{-1+\sqrt{v}}{2},
\qquad
s:=\frac{-1-\sqrt{v}}{2},
\]
as well as $\varepsilon$ and $\beta$ from Theorem~\ref{thm.doubling signature}, we may take
\begin{alignat*}{2}
&
a
=\tfrac{1}{v}\Big(1+k\sqrt{1+\gamma(r-s)}+k\sqrt{1-\gamma(r-s)}\Big),
&\quad&
d
=\tfrac{\gamma}{v}\Big(
\varepsilon+2k\operatorname{Re}\beta+k\tfrac{\varepsilon+\beta r+\overline{\beta}s}{\sqrt{1+\gamma(r-s)}}+k\tfrac{\varepsilon+\beta s+\overline{\beta}r}{\sqrt{1-\gamma(r-s)}}\Big),\\
&
b
=\tfrac{1}{v}\Big(1+r\sqrt{1+\gamma(r-s)}+s\sqrt{1-\gamma(r-s)}\Big),
&&
e
=\tfrac{\gamma}{v}\Big(
\varepsilon+2k\operatorname{Re}\beta+r\tfrac{\varepsilon+\beta r+\overline{\beta}s}{\sqrt{1+\gamma(r-s)}}+s\tfrac{\varepsilon+\beta s+\overline{\beta}r}{\sqrt{1-\gamma(r-s)}}\Big),\\
&
c
=\tfrac{1}{v}\Big(1+s\sqrt{1+\gamma(r-s)}+r\sqrt{1-\gamma(r-s)}\Big),
&&
f
=\tfrac{\gamma}{v}\Big(
\varepsilon+2k\operatorname{Re}\beta+s\tfrac{\varepsilon+\beta r+\overline{\beta}s}{\sqrt{1+\gamma(r-s)}}+r\tfrac{\varepsilon+\beta s+\overline{\beta}r}{\sqrt{1-\gamma(r-s)}}\Big).
\end{alignat*}
\end{theorem}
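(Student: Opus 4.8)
The plan is to exhibit an explicit synthesis operator whose Gram matrix is $I + \gamma S$, where $S$ is the signature matrix from Theorem~\ref{thm.doubling signature} and $\gamma = 1/\sqrt{2v-1}$. The key observation is that every block in $I + \gamma S$ lies in the commutative $*$-algebra $\mathcal{A}$ spanned by $\{I, A, B\}$. Since $A$ is the adjacency matrix of a conference graph, $\mathcal{A}$ is $3$-dimensional with a common eigenbasis: the all-ones vector $\mathbf{1}$ (eigenvalue $k = \frac{v-1}{2}$ for $A$), and two eigenspaces of dimension $k$ where $A$ acts by $r = \frac{-1+\sqrt v}{2}$ and $s = \frac{-1-\sqrt v}{2}$ respectively; correspondingly $B = J - I - A$ acts by $k$, $s$, $r$ on these three eigenspaces. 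So I would diagonalize everything simultaneously, reducing the $2v \times 2v$ Gram matrix to a block-diagonal matrix with three $2\times 2$ blocks (one for each eigenspace of $\mathcal{A}$), each appearing with multiplicity $1$, $k$, $k$.

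**Next I would** compute those three $2\times 2$ Gram blocks. On the eigenspace where $A \mapsto \mu_A$, $B \mapsto \mu_B$, the $2\times 2$ block of $\Phi^*\Phi = I + \gamma S$ is $\left[\begin{smallmatrix} 1 + \gamma(\mu_A - \mu_B) & \gamma(\varepsilon + \beta \mu_A + \overline\beta \mu_B) \\ \gamma(\varepsilon + \overline\beta \mu_A + \beta \mu_B) & 1 - \gamma(\mu_A - \mu_B)\end{smallmatrix}\right]$. For the $\mathbf{1}$-eigenspace, $\mu_A = \mu_B = k$, so the diagonal is $1$ and this $2\times 2$ matrix has determinant $1 - \gamma^2|\varepsilon + (\beta+\overline\beta)k|^2$, which Theorem~\ref{thm.doubling signature} already guarantees is rank-deficient (it must be, since $S^2 = (2v-1)I$ forces $(I+\gamma S)$ to have the right rank); I would record that this block is rank $1$. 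For the $r$- and $s$-eigenspaces, $\mu_A - \mu_B = \pm(r-s) = \pm\sqrt v$, giving diagonal entries $1 \pm \gamma\sqrt v = 1 \pm \gamma(r-s)$ — this is why $\sqrt{1 \pm \gamma(r-s)}$ appears in the stated formulas. Each of these blocks is also rank $1$ (same eigenvalue count). Then for each rank-$1$ block I would write down a $1\times 2$ row vector realizing it: the first entry is $\sqrt{1 + \gamma(\mu_A-\mu_B)}$ (a positive real), and the second entry is forced by the off-diagonal, namely $\gamma(\varepsilon + \beta\mu_A + \overline\beta\mu_B)/\sqrt{1+\gamma(\mu_A-\mu_B)}$; Lemma~\ref{lem.low rank extension} applied to the rank-$1$ $2\times 2$ block with invertible $(1,1)$ entry is exactly what certifies that the bottom-right entry then comes out correct automatically.

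**Assembling this**, the synthesis operator $\Phi$ is (after conjugating by the unitary that diagonalizes $\mathcal A$) block-diagonal with these three row vectors on the respective eigenspaces. Conjugating back, $\Phi$ becomes a $v \times 2v$ matrix whose two $v\times v$ halves each lie in $\mathcal A$; reading off the coefficients in the basis $\{I, A, B\}$ via the inverse of the change-of-basis (the spectral projections onto the $\mathbf{1}$-, $r$-, and $s$-eigenspaces have known expressions in terms of $I, A, B$, or equivalently one inverts a $3\times 3$ Vandermonde-like system in the eigenvalues $k, r, s$) produces $a, b, c$ for the left half and $d, e, f$ for the right half. Carrying out that linear algebra — the projection onto the $\mathbf1$-eigenspace contributes the $\frac1v(1 + \cdots)$ term uniformly, and the $r,s$-projections contribute the terms weighted by $r$ and $s$ — yields precisely the six formulas in the statement, with the $\gamma$ prefactor on $d,e,f$ coming from the $\gamma$ in front of the off-diagonal block of $I + \gamma S$. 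Finally I would confirm column norms: each column of $\Phi$ has norm $1$ because the diagonal of $\Phi^*\Phi = I + \gamma S$ is all ones, and the tight-frame condition $\Phi\Phi^* = 2I$ (the redundancy is $2v/v = 2$) follows since $(I+\gamma S)^2 = I + 2\gamma S + \gamma^2 S^2 = I + 2\gamma S + I = 2(I + \gamma S)$, so $I+\gamma S$ is $2$ times a projection of rank $v$; hence $\Phi$ is an ETF.

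**The main obstacle** I anticipate is bookkeeping rather than conceptual: keeping the branch of each square root consistent (the stated $a,b,c$ use the \emph{positive} square roots, which is the correct choice to make $\Phi$'s columns the intended frame vectors, but one must check $1 \pm \gamma(r-s) > 0$, which holds since $\gamma(r-s) = \sqrt v/\sqrt{2v-1} < 1$ for $v \geq 1$), and correctly inverting the $3\times 3$ spectral system so that the coefficients of $A$ and $B$ don't get swapped. A secondary subtlety is that there is a gauge freedom — each rank-$1$ block's realizing vector is determined only up to a unimodular scalar — so to land on \emph{these particular} formulas one must fix the phase by demanding the first coordinate of each block be a positive real; I would state that choice explicitly at the outset. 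Everything else is the routine but lengthy expansion already foreshadowed by the "lengthy but straightforward calculation" in the proof of Theorem~\ref{thm.doubling signature}.
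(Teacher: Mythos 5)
Your proposal is correct and follows essentially the same route as the paper: the paper also builds the synthesis operator as $\bigl[\,G_{11}^{1/2}\mid G_{11}^{-1/2}G_{12}\,\bigr]$ (which, restricted to each common eigenspace of $\{I,A,B\}$, is exactly your rank-one $1\times2$ factor with positive first entry), verifies its Gram matrix via Lemma~\ref{lem.low rank extension}, and extracts $a,\dots,f$ from the primitive idempotents $L,P,Q$ with eigenvalue triples $(k,r,s)$ and $(k,s,r)$. The only cosmetic difference is that you diagonalize down to three explicit $2\times2$ blocks of multiplicities $1,k,k$ and apply the rank-one factorization blockwise, whereas the paper applies the lemma once at the level of $v\times v$ blocks and uses functional calculus in the adjacency algebra; the underlying computation is identical.
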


\begin{proof}
Given the signature matrix $S\in(\mathbb{C}^{v\times v})^{2\times 2}$ from Theorem~\ref{thm.doubling signature}, let $G:=I+\gamma S\in(\mathbb{C}^{v\times v})^{2\times 2}$ denote the corresponding Gram matrix.
We will prove four things about the blocks $G_{11},G_{12}\in\mathbb{C}^{v\times v}$, which together imply the result:
\begin{itemize}
\item[(i)]
$G_{11}$ is positive definite.
\item[(ii)]
$\left[\begin{array}{cc}G_{11}^{1/2}&G_{11}^{-1/2}G_{12}
\end{array}\right]$ is a $v\times 2v$ complex ETF with Gram matrix $G$.
\item[(iii)]
$G_{11}^{1/2}$ and $G_{11}^{-1/2}G_{12}$ both reside in the complex span of $\{I,A,B\}$.
\item[(iv)]
The coefficients of $G_{11}^{1/2}$ and $G_{11}^{-1/2}G_{12}$ in $\{I,A,B\}$ are given by $a,b,c,d,e,f$ above, respectively.
\end{itemize}
First, we use Lemma~\ref{lem.low rank extension} to verify (ii) under the assumption that (i) holds:
\[
\left[\begin{array}{cc}G_{11}^{1/2}&G_{11}^{-1/2}G_{12}
\end{array}\right]^*
\left[\begin{array}{cc}G_{11}^{1/2}&G_{11}^{-1/2}G_{12}
\end{array}\right]
=\left[\begin{array}{cc}
G_{11} & G_{12} \\
G_{12}^* & G_{12}^* G_{11}^{-1} G_{12}
\end{array}\right]
=\left[\begin{array}{cc}
G_{11} & G_{12} \\
G_{21} & G_{21} G_{11}^{-1} G_{12}
\end{array}\right]
=G.
\]
For the remaining items, we recall that $I$, $A$, and $B$ span a complex commutative $*$-algebra $\mathcal{A}$.
The primitive idempotents $L,P,Q\in\mathcal{A}$ form an alternate orthogonal basis for $\mathcal{A}$ such that
\[
I = L+P+Q,
\qquad
A = kL+rP+sQ,
\qquad
B = kL+sP+rQ.
\]
(These eigenvalues can be derived from the spectral theory of conference graphs~\cite{BrouwerH:11}.)
It follows that the eigenvalues of $G_{11}=I+\gamma(A-B)=G_{11}^*$ are $1$ and $1\pm\gamma(r-s)$, all of which are strictly positive, meaning (i) holds.
For (iii), one may compute $G_{11}^{1/2}$ and $G_{11}^{-1/2}$ by taking (reciprocals of) the square roots of the coefficients of $G_{11}$ in $\{L,P,Q\}$.
It follows that $G_{11}^{1/2},G_{11}^{-1/2}\in\mathcal{A}$ and furthermore, $G_{11}^{-1/2}G_{12}\in\mathcal{A}$.
Finally, (iv) follows by actually performing these (straightforward but tedious) computations.
\end{proof}

\section{Multi-generator harmonic collections of vectors}
\label{sec.multi generator}

In this section, we study the symmetries exhibited by many of the ETFs constructed in the previous section, and then we use these symmetries to refine the $d\times 2d$ conjecture.
We start by considering some low-dimensional examples.

\begin{example}
\label{ex.paley 5}
Applying Theorem~\ref{thm.short fat repn of doubling} with $\varepsilon=1$ to the cyclic graph on $5$ vertices produces the ETF
\[
\left[
\begin{array}{ccccc|ccccc}
a & b & c & c & b & d & e & f & f & e \\
b & a & b & c & c & e & d & e & f & f \\
c & b & a & b & c & f & e & d & e & f \\
c & c & b & a & b & f & f & e & d & e \\
b & c & c & b & a & e & f & f & e & d
\end{array}
\right],
\qquad
\quad
\begin{array}{ll}
a = \tfrac{1}{5}+2\sqrt{\tfrac{2}{15}},
\qquad
&d = \tfrac{1}{5}-\operatorname{exp}(\tfrac{2\pi \mathrm{i}} {3})\sqrt{\tfrac{2}{15}},\\[2ex]
b = \tfrac{1}{5},
\qquad
&e = \tfrac{6-\sqrt{-15(13+3\mathrm{i}\sqrt{3})}}{30},\\[2ex]
c = \tfrac{1}{5}-\sqrt{\tfrac{2}{15}},
\qquad
&f = \tfrac{1}{5}-\tfrac{\mathrm{i}}{\sqrt{10}}.
\end{array}
\]
Notably, this ETF consists of two circulant matrices.
This occurs more generally when applying Theorem~\ref{thm.short fat repn of doubling} to a circulant conference graph, e.g., a Paley graph with a prime number of vertices.
\end{example}

\begin{example}
In the special case of the cyclic graph on $5$ vertices, one may replace the off-diagonal blocks in equation~\eqref{eq.doubling conference graph to signature matrix} to obtain the signature matrix of a real $5\times 10$ ETF.
The proof of Theorem~\ref{thm.short fat repn of doubling} then gives the ETF
\[
\left[
\begin{array}{ccccc|ccccc}
a & b & c & c & b & d & e & f & f & e \\
b & a & b & c & c & e & d & e & f & f \\
c & b & a & b & c & f & e & d & e & f \\
c & c & b & a & b & f & f & e & d & e \\
b & c & c & b & a & e & f & f & e & d
\end{array}
\right],
\qquad
\quad
\begin{array}{ll}
a = \tfrac{1}{5}+2\sqrt{\tfrac{2}{15}},
\qquad
&d = -\tfrac{1}{5}+2\sqrt{\tfrac{2}{15}},\\[2ex]
b = \tfrac{1}{5},
\qquad
&e = -\tfrac{1}{5}-\sqrt{\tfrac{2}{15}},\\[2ex]
c = \tfrac{1}{5}-\sqrt{\tfrac{2}{15}},
\qquad
&f = -\tfrac{1}{5}.
\end{array}
\]
As before, this ETF consists of two circulant matrices, but they are both real in this case.
\end{example}

The above examples suggest the following definition.

\begin{definition}
A $d\times td$ matrix in $(\mathbb{C}^{d\times d})^{1\times t}$ is \textbf{$t$-circulant} if each $d\times d$ block is a circulant matrix.
\end{definition}

As the following examples illustrate, many important ETFs are $t$-circulant in the appropriate basis.

\begin{example}
\label{ex.3x6}
Recall that a real $3\times 6$ matrix with unit-norm columns is an ETF if and only if its columns and their negatives form the vertices of a regular icosahedron~\cite{Waldron:09}.
The regular icosahedron exhibits a symmetry by rotating $\frac{2\pi}{3}$ radians about an axis through the center of one of its triangular faces.
(See Figure~\ref{figure.icosahedron} for an illustration.)
If we regard one direction of this axis as the north pole, then the $12$ vertices reside in $4$ lines of latitude.
If we orient the north pole to point in the $(1,1,1)$ direction, then in the appropriate coordinate system, the vertices in the first and third lines of latitude are given by the cyclic translations of
\[
\tfrac{1}{\sqrt{1+\phi^2}}\cdot(0,1,\phi)
\qquad
\text{and}
\qquad
\tfrac{1}{\sqrt{1+\phi^2}}\cdot(0,1,-\phi),
\]
respectively, where $\phi=\frac{1+\sqrt{5}}{2}$ is the golden ratio.
This $2$-circulant representation of the real $3\times 6$ ETF stems from the doubling construction found in Example~7 of~\cite{FallonI:23}, and we will generalize this correspondence in the next section.
\end{example}

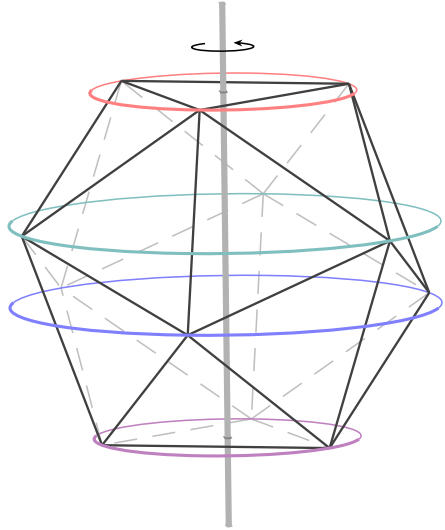
\begin{figure}
\begin{center}
\tdplotsetmaincoords{0}{0}
\resizebox{!}{200 pt}{
\begin{tikzpicture}[tdplot_main_coords]
	\tdplotsetrotatedcoords{38}{-13}{-38}
	\begin{scope}[tdplot_rotated_coords]

	\coordinate (0) at (0,0,0);
	\coordinate (1) at (0,0.187592474085080,-0.982246946376846);
	\coordinate (2) at (-0.850650808352040,0.187592474085080,0.491123473188423);
	\coordinate (3) at (0.850650808352040,0.187592474085080,0.491123473188423);
	\coordinate (4) at (0, -0.794654472291766,-0.607061998206686);
	\coordinate (5) at (-0.525731112119134,-0.794654472291766,0.303530999103343);
	\coordinate (6) at (0.525731112119134,-0.794654472291766,0.303530999103343);
	\coordinate (7) at (0,-0.187592474085080,0.982246946376846);
	\coordinate (8) at (0.850650808352040,-0.187592474085080,-0.491123473188423);
	\coordinate (9) at (-0.850650808352040,-0.187592474085080,-0.491123473188423);
	\coordinate (10) at (0, 0.794654472291766,0.607061998206686);
	\coordinate (11) at (0.525731112119134,0.794654472291766,-0.303530999103343);
	\coordinate (12) at (-0.525731112119134,0.794654472291766,-0.303530999103343);
        \coordinate (N) at (0,1.2,0);
        \coordinate (S) at (0,-1.2,0);

	\draw [densely dashed, very thin, lightgray] (1) -- (4);
	\draw [densely dashed, very thin, lightgray] (1) -- (8);
	\draw [densely dashed, very thin, lightgray] (1) -- (9);
	\draw [densely dashed, very thin, lightgray] (1) -- (11);
	\draw [densely dashed, very thin, lightgray] (1) -- (12);
	\draw [densely dashed, very thin, lightgray] (2) -- (9);
	\draw [densely dashed, very thin, lightgray] (4) -- (5);
	\draw [densely dashed, very thin, lightgray] (4) -- (6);
	\draw [densely dashed, very thin, lightgray] (4) -- (8);
	\draw [densely dashed, very thin, lightgray] (4) -- (9);
	\draw [densely dashed, very thin, lightgray] (5) -- (9);
	\draw [densely dashed, very thin, lightgray] (9) -- (12);

        
        \draw [thick, black!30] (S) -- (0,-0.794654472291766,0);
        \begin{scope}[canvas is xz plane at y=1.2,transform shape]
            \draw [black!30] (0,0) circle [radius=0.007];
        \end{scope} 
        \begin{scope}[canvas is xz plane at y=0.794654472291766,transform shape]
            \draw [very thin, red!50] (-0.6070619982,0) arc (180:360:0.6070619982);
        \end{scope} 
        \begin{scope}[canvas is xz plane at y=0.187592474085080,transform shape]
            \draw [very thin, teal!50] (-0.98224694637,0) arc (180:360:0.98224694637);
        \end{scope}
        \begin{scope}[canvas is xz plane at y=-0.187592474085080,transform shape]
            \draw [very thin, blue!50] (-0.98224694637,0) arc (180:360:0.98224694637);
        \end{scope}
        \begin{scope}[canvas is xz plane at y=-0.794654472291766,transform shape]
            \draw [black!50] (0,0) circle [radius=0.01];
            \draw [very thin, violet!50] (-0.6070619982,0) arc (180:360:0.6070619982);
        \end{scope} 
        \begin{scope}[canvas is xz plane at y=-1.2,transform shape]
            \draw [black!30] (0,0) circle [radius=0.007];
        \end{scope}

        \draw [thick, black!30] (0,-0.794654472291766,0) -- (0,0.794654472291766,0);
        \begin{scope}[canvas is xz plane at y=-0.792,transform shape]
        \end{scope} 
 
	\draw [darkgray, line width=0.3pt] (2) -- (5);
	\draw [darkgray, line width=0.3pt] (2) -- (7);
	\draw [darkgray, line width=0.3pt] (2) -- (10);
	\draw [darkgray, line width=0.3pt] (2) -- (12);
	\draw [darkgray, line width=0.3pt] (3) -- (6);
	\draw [darkgray, line width=0.3pt] (3) -- (7);
	\draw [darkgray, line width=0.3pt] (3) -- (8);
	\draw [darkgray, line width=0.3pt] (3) -- (10);
	\draw [darkgray, line width=0.3pt] (3) -- (11);
	\draw [darkgray, line width=0.3pt] (5) -- (6);
	\draw [darkgray, line width=0.3pt] (5) -- (7);
	\draw [darkgray, line width=0.3pt] (6) -- (7);
	\draw [darkgray, line width=0.3pt] (6) -- (8);
	\draw [darkgray, line width=0.3pt] (7) -- (10);
	\draw [darkgray, line width=0.3pt] (8) -- (11);
	\draw [darkgray, line width=0.3pt] (10) -- (11);
	\draw [darkgray, line width=0.3pt] (10) -- (12);
	\draw [darkgray, line width=0.3pt] (11) -- (12);

\begin{scope}[canvas is xz plane at y=0.794654472291766,transform shape]
            \draw [black!50] (0,0) circle [radius=0.01];
            \draw [red!50] (0.6070619982,0) arc (0:180:0.6070619982);
        \end{scope} 
        \begin{scope}[canvas is xz plane at y=0.187592474085080,transform shape]
            \draw [teal!50] (0.98224694637,0) arc (0:180:0.98224694637);
        \end{scope}
        \begin{scope}[canvas is xz plane at y=-0.187592474085080,transform shape]
            \draw [blue!50] (0.98224694637,0) arc (0:180:0.98224694637);
        \end{scope}
        \begin{scope}[canvas is xz plane at y=-0.794654472291766,transform shape]
            \draw [violet!50] (0.6070619982,0) arc (0:180:0.6070619982);
        \end{scope} 
        \draw [thick, black!30] (N) -- (0,0.794654472291766,0);
        \begin{scope}[canvas is xz plane at y=1,transform shape]
            \usetikzlibrary{arrows.meta}
            \draw [very thin, -{Stealth[scale=0.3]}] (-0.1,-0.1) arc (225:-80:0.141);
        \end{scope} 

	\end{scope}
\end{tikzpicture}
}
\end{center}
\caption{\label{figure.icosahedron} Illustration of Example~\ref{ex.3x6}. The vertices of a regular icosahedron partition into four lines of latitude. If we select coordinates so that the north pole points in the $(1,1,1)$ direction, then the vertices in the red and blue lines of latitude form a $2$-circulant representation of the real $3\times 6$ ETF.}
\end{figure}

\begin{example}
\label{ex.steiner}
Recall that a \textit{planar cyclic difference set} is a subset $D\subseteq C_v$ of size $k$ such that every nonzero member of $C_v$ can be expressed uniquely as a difference of members of $D$.
By counting, it necessarily holds that $v=m^2+m+1$ and $k=m+1$ for some positive integer $m$.
Such an object exists if (and conjecturally only if) $m$ is a prime power~\cite{Singer:38,Gordon:94}.
Now consider a (possibly complex) $(k+1)\times(k+1)$ Hadamard matrix $H$ with rows indexed by $D\cup\{\infty\}$ and columns indexed by $[k+1]$.
Then for each $i\in[k+1]$, the $i$th column of $H$ determines a vector $\varphi_i\in\mathbb{C}^{C_v}$ as follows:
\[
\varphi_i(g)
= \left\{\begin{array}{cl}
\frac{1}{\sqrt{k}}H_{g,i} & \text{if } g\in D\\
0 & \text{else.}
\end{array}\right.
\]
Recalling the general theory of \textit{Steiner ETFs}~\cite{FickusMT:12}, it follows that the cyclic translations of $\varphi_1,\ldots,\varphi_{k+1}$ form a $(k+1)$-circulant ETF.
For example, this construction produces a complex $3$-circulant $3\times 9$ ETF and a real $4$-circulant $7\times 28$ ETF.
\end{example}

\begin{example}
\label{ex.complex maximal}
The complex $3$-circulant $3\times 9$ ETF constructed in Example~\ref{ex.steiner} is extremal in the sense that a complex $d\times n$ ETF exists only if $n\leq d^2$; this is known as \textit{Gerzon's bound}~\cite{LemmensS:73-lines}.
In his PhD thesis, Zauner~\cite{Zauner:99} conjectured that for every $d$, there exists an ETF that saturates Gerzon's bound, and furthermore, such an ETF can be realized by taking all $d$ circular translations of each of $d$ modulations of a seed vector.
(Zauner also conjectured additional structure in the seed vector that we will not discuss here.)
Notably, such ETFs are necessarily $d$-circulant.
To date, Zauner's conjecture has been verified in scores of dimensions (see~\cite{ApplebyCFW:18}, for example), and numerical evidence supports the conjecture in hundreds of additional dimensions (see~\cite{ScottG:10}, for example).
The conjecture appears to follow from a strengthened version of the Stark conjectures from algebraic number theory (see~\cite{Kopp:21}, for example).
\end{example}

Returning to the primary subject of this paper, the prevalence of $2$-circulant ETFs that arise in Example~\ref{ex.paley 5} from doubling the Paley graph suggests a strengthening of the $d\times 2d$ conjecture.

\begin{conjecture}[medium $d\times 2d$ conjecture]
\label{conj.medium d by 2d}
For every $d$, there exists a $2$-circulant $d\times 2d$ ETF.
\end{conjecture}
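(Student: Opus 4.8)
The plan is to translate the existence of a $2$-circulant $d\times 2d$ ETF into a fixed-size system of polynomial equations and then to attack that system both constructively and by certified numerics. A $2$-circulant matrix $\Phi=[\,C_1\mid C_2\,]$ is determined by the two generator vectors $c_1,c_2\in\mathbb{C}^d$ forming the first columns of its circulant blocks. Because every circulant matrix on $C_d$ is diagonalized by the discrete Fourier transform, the frame operator $\Phi\Phi^*=C_1C_1^*+C_2C_2^*$ is circulant and equals $2I$ exactly when $|\widehat{c_1}(\omega)|^2+|\widehat{c_2}(\omega)|^2=2$ at every $d$th root of unity $\omega$; similarly each of the four blocks of the block-circulant Gram matrix $\Phi^*\Phi$ is circulant, so equiangularity with constant $\gamma=(2d-1)^{-1/2}$ amounts to the autocorrelations of $c_1$ and of $c_2$ having modulus $\gamma$ away from the origin and the cross-correlation of $c_1$ with $c_2$ having modulus $\gamma$ everywhere. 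In other words, a $2$-circulant $d\times 2d$ ETF is exactly a group frame for $C_d$ with two orbits of lines, and the task is to exhibit, for each $d$, a pair of generators whose Fourier data satisfies these relations.

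The base cases are already in hand: by Theorem~\ref{thm.short fat repn of doubling}, doubling a circulant conference graph produces a $2$-circulant $d\times 2d$ ETF, which in particular settles every prime $d\equiv 1\bmod 4$ via the Paley graph, as in Example~\ref{ex.paley 5}. For the remaining $d$ I would pursue two complementary lines. The first is to search for further closed-form generators by deforming or recombining the known ones---for instance, passing from the quadratic character of $\mathbb{F}_q$ to Gauss-sum expressions valid for more general orders, or asking when a Steiner-type difference-set ETF as in Example~\ref{ex.steiner} can be folded from $(k+1)$ circulant blocks down to two---after first quotienting by the symmetries (cyclic reindexing, translation, modulation, and Naimark-type moves) that preserve the $2$-circulant form and thereby shrink the search space. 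The second is perturbative: for dimensions with no clean construction, start from an approximate $2$-circulant ETF obtained by alternating projection onto the tight-frame and equiangular constraints, and then certify a genuine nearby solution by a Newton--Kantorovich argument in the spirit of~\cite{CohnKM:16}. Since the defining polynomials have bounded degree, one expects this to dispatch all sufficiently large $d$, reducing the conjecture to a finite computation.

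The main obstacle is uniformity in $d$. A Newton--Kantorovich reduction to finitely many dimensions requires $d$-uniform control of its hypotheses: a family of approximate solutions whose residual tends to zero together with a bound on the inverse of the linearization \emph{transverse} to the (conjecturally positive-dimensional) solution manifold, and it is not at all clear that alternating-projection iterates, or any cheaply computable candidates, enjoy this behavior. The parameter count makes abundant solutions plausible, but a dimension heuristic never by itself produces a real point on the variety, and such heuristics are known to fail sporadically. Barring either a single algebraic family valid for every $d$ or a robust asymptotic existence theorem, the conjecture can only be resolved finitely many dimensions at a time, and that is the program carried out in the remainder of the paper.
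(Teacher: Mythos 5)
The statement you are addressing is a \emph{conjecture}, and the paper does not prove it; it only assembles evidence. Your proposal honestly arrives at the same conclusion --- that absent a single algebraic family valid for all $d$ or a genuinely $d$-uniform certification argument, one can only verify the claim finitely many dimensions at a time --- and the program you sketch is essentially the one the paper carries out. Your reduction to Fourier-side conditions on the two generators is exactly Lemma~\ref{lem.2-circ constraints} (with one small redundancy: you impose the autocorrelation condition on both generators, whereas the paper observes that tightness plus equiangularity of the translates of $x$ already forces equiangularity of the translates of $y$, since the off-diagonal entries of $C_y^*C_y$ are the negatives of those of $C_x^*C_x$; this matters only for the variable/constraint count underlying the strong conjecture). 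Your ``base cases'' via doubling circulant conference graphs match Example~\ref{ex.paley 5}, and your proposed extension to further algebraic families is realized in Theorem~\ref{thm.2-circ ETF families}, which covers $G_q+1$, $2\cdot G_q$, and $2\cdot(G_q+1)$ by exhibiting automorphisms of cycle type $m^2$ and invoking Theorems~\ref{thm.t-gen Gamma-harm detector} and~\ref{thm.t-gen C_m-harm detector}. Your perturbative step --- alternating projections to find an approximate solution followed by a Newton--Kantorovich certificate in the spirit of~\cite{CohnKM:16} --- is precisely Theorem~\ref{thm.secant jacobian} and the proof of Theorem~\ref{thm.strong d by 2d for small d}, which settles $d\leq 165$. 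The obstacle you name, namely $d$-uniform control of the residual and of the inverse linearization transverse to the solution manifold, is indeed the open gap; neither you nor the paper closes it, so the conjecture remains a conjecture.
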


As evidence in favor of Conjecture~\ref{conj.medium d by 2d}, the arXiv version of this paper includes an ancillary file that contains numerical constructions for $d\leq 1500$.

It is frequently convenient to work with the following generalization of $t$-circulant matrices.

\begin{definition}
Given a finite abelian group $\Gamma$, a unitary representation $\rho\colon\Gamma\to U(d)$, and generators $\psi_1,\ldots,\psi_t\in\mathbb{C}^d$, we say $\{\rho(g)\psi_i\}_{g\in\Gamma,i\in[t]}$ is \textbf{$t$-generator $\Gamma$-harmonic} with underlying representation~$\rho$.
\end{definition}

\begin{example}
The \textit{regular representation} of a finite abelian group $\Gamma$ is $\rho\colon\Gamma\to U(\mathbb{C}^\Gamma)$ given by $\rho(g)\varphi_h=\varphi_{h-g}$ for $\varphi\in\mathbb{C}^\Gamma$ and $g,h\in\Gamma$.
For instance, when $\Gamma=C_d$ is cyclic, the regular representation implements cyclic translations.
Since every circulant matrix can be obtained by applying such translations to its columns, every $t$-circulant matrix is $t$-generator $C_d$-harmonic.
\end{example}

Note that $t$-generator $C_d$-harmonic does not imply $t$-circulant unless the underlying representation $\rho$ is the regular representation.
For the reader who is familiar with existing frame-theoretic nomenclature, \textit{harmonic frames} are precisely the $1$-generator $\Gamma$-harmonic frames that happen to be tight.
In what follows, we characterize the Gram matrices of $t$-generator $\Gamma$-harmonic collections of vectors.
Below, $\hat\Gamma$ denotes the Pontryagin dual of a finite abelian group $\Gamma$; it consists of all group homomorphisms $\alpha\colon\Gamma\to\mathbb{T}$, and it is a group under pointwise multiplication.

\begin{theorem}
\label{thm.t-gen Gamma-harm detector}
Given an abelian group $\Gamma$ of order $m$ and $G\in(\mathbb{C}^{\Gamma\times \Gamma})^{t\times t}$, the following are equivalent:
\begin{itemize}
\item[(a)]
$G$ is the Gram matrix of a $t$-generator $\Gamma$-harmonic collection of vectors.
\item[(b)]
$G$ is positive semidefinite and each block $G_{i,j}\in\mathbb{C}^{\Gamma\times\Gamma}$ is $\Gamma$-stable, that is, 
\[
(G_{i,j})_{k+g,k+h}=(G_{i,j})_{g,h}
\quad
\text{for every} 
\quad
g,h,k\in \Gamma.
\]
\item[(c)]
Each block $G_{i,j}\in\mathbb{C}^{\Gamma\times \Gamma}$ is diagonalized by the Fourier transform matrix $F\in\mathbb{C}^{\hat\Gamma\times\Gamma}$ defined by
\[
F_{\alpha,g}
=\frac{1}{\sqrt{m}}\overline{\alpha(g)}
\qquad
(\alpha\in\hat\Gamma,\,g\in\Gamma)
\]
and each  $H_\alpha\in\mathbb{C}^{t\times t}$ defined by 
\[
(H_\alpha)_{i,j}
=(FG_{i,j}F^{-1})_{\alpha,\alpha}
\qquad
(i,j\in[t])
\]
is positive semidefinite.
\end{itemize}
If in addition to the above equivalent conditions, $G$ satisfies both 
\begin{equation}
\label{eq.reg repn detector}
G^2=tG
\qquad
\text{and}
\qquad
\sum_{i\in[t]} G_{i,i}=tI,
\end{equation}
then $G$ is the Gram matrix of a $t$-generator $\Gamma$-harmonic collection of vectors in $\mathbb{C}^m$, where the underlying representation is the regular representation of $\Gamma$.
In particular, if this holds and $\Gamma=C_m$, then $G$ is the Gram matrix of a $t$-circulant ETF.
\end{theorem}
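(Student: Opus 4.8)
The plan is to prove the three-way equivalence by the cycle (a)$\Rightarrow$(b)$\Rightarrow$(a) and (b)$\Leftrightarrow$(c), and then handle the supplementary regular-representation claim. First I would prove (a)$\Rightarrow$(b): if $\{\rho(g)\psi_i\}$ is a $t$-generator $\Gamma$-harmonic collection with Gram matrix $G$, then since $\rho$ is unitary and $\Gamma$ is abelian, the entry indexed by $(g,i),(h,j)$ equals $\langle\rho(g)\psi_i,\rho(h)\psi_j\rangle=\langle\psi_i,\rho(h-g)\psi_j\rangle$, which depends on $g,h$ only through $h-g$. Thus $(G_{i,j})_{g,h}$ is a function of $h-g$, which is exactly $\Gamma$-stability, and $G$ is positive semidefinite because it is a Gram matrix. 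For the converse (b)$\Rightarrow$(a), the key observation is that $\Gamma$-stability of $G_{i,j}$ is equivalent to $G_{i,j}$ commuting with every translation, i.e.\ with the regular representation $\rho$ of $\Gamma$ on $\mathbb{C}^\Gamma$. Hence $G$ commutes with $\sigma:=\rho^{\oplus t}$, the $t$-fold block-diagonal copy of the regular representation on $(\mathbb{C}^\Gamma)^{t}=\mathbb{C}^{\Gamma\times[t]}$. Since $G$ is positive semidefinite I would form $G^{1/2}$, which as a function of $G$ also commutes with $\sigma$; reading off its columns as vectors $\{\varphi_{(g,i)}\}$ gives Gram matrix $G$, and the relation $\sigma(k)\varphi_{(h,j)}=\varphi_{(h+k,j)}$ shows $\varphi_{(g,i)}=\sigma(g)\psi_i$ with $\psi_i:=\varphi_{(0,i)}$. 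This exhibits $G$ as a $t$-generator $\Gamma$-harmonic Gram matrix with underlying representation $\sigma$.

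Next I would establish (b)$\Leftrightarrow$(c). The Fourier matrix $F$ simultaneously diagonalizes the regular representation, so a block is $\Gamma$-stable (equivalently, commutes with all translations) if and only if it is diagonalized by $F$, matching the first half of (c). Conjugating $G$ by the block-unitary $I_t\otimes F$ and permuting indices to group by the character $\alpha\in\hat\Gamma$ turns $G$ into the block-diagonal matrix $\bigoplus_\alpha H_\alpha$, since the $(\alpha,\alpha)$-entry of $FG_{i,j}F^{-1}$ is precisely $(H_\alpha)_{i,j}$. As positive semidefiniteness is preserved under unitary conjugation and permutation, $G$ is positive semidefinite if and only if every $H_\alpha$ is, which is exactly the remaining content of (c).

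For the supplement I would work through the block-diagonalization just constructed, which identifies the restriction of $G$ to the $\alpha$-isotypic component $V_\alpha\cong\mathbb{C}^t$ of $\sigma$ with $H_\alpha$. From $G^2=tG$ I get $H_\alpha^2=tH_\alpha$, so $\tfrac1t H_\alpha$ is idempotent and (being positive semidefinite) an orthogonal projection, giving $\operatorname{rank}H_\alpha=\tfrac1t\operatorname{tr}H_\alpha$. Applying $F$ to $\sum_i G_{i,i}=tI$ yields $\operatorname{tr}H_\alpha=\sum_i(H_\alpha)_{i,i}=t$ for every $\alpha$, whence $\operatorname{rank}H_\alpha=1$. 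Since $\sigma$ permutes the spanning vectors $\varphi_{(g,i)}$, the span $W=\operatorname{im}(G)$ is $\sigma$-invariant, and the multiplicity of the character $\alpha$ in $W$ equals $\dim\operatorname{im}(H_\alpha)=\operatorname{rank}H_\alpha=1$. Thus $\sigma|_W$ contains each of the $m=|\hat\Gamma|$ characters exactly once and is therefore isomorphic to the regular representation; conjugating by a unitary intertwiner $T\colon W\to\mathbb{C}^\Gamma$, the generators $T\psi_i$ realize $G$ in $\mathbb{C}^m$ with underlying representation literally the regular representation of $\Gamma$. Specializing to $\Gamma=C_m$, the regular representation acts by cyclic shifts, so each generator's orbit forms a circulant block and the resulting $m\times tm$ matrix is $t$-circulant; in the detection setting of this section, where $G$ is the Gram matrix of an ETF, this exhibits $G$ as a $t$-circulant ETF.

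The hard part will be the supplement, and specifically the bookkeeping that matches the block-diagonalization $G\cong\bigoplus_\alpha H_\alpha$ with the isotypic decomposition of $\sigma$, so that $\operatorname{rank}H_\alpha$ is exactly the multiplicity of $\alpha$ in $\operatorname{im}(G)$, and then the passage from ``$\operatorname{rank}H_\alpha=1$ for every $\alpha$'' to ``$\sigma|_W$ is the \emph{full} regular representation'' rather than merely some $m$-dimensional subrepresentation. By contrast, the equivalences (a)--(c) should be comparatively routine once the guiding slogan ``$\Gamma$-stable $=$ commutes with translations $=$ Fourier-diagonal'' is in place.
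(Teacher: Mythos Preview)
Your argument is correct, and it takes a genuinely different route from the paper's.  The paper proves the cycle (a)$\Rightarrow$(b)$\Rightarrow$(c)$\Rightarrow$(a): its (c)$\Rightarrow$(a) step is an explicit construction that, for each $\alpha$ with $H_{\overline\alpha}\neq 0$, chooses vectors $\psi_i^{(\alpha)}\in\mathbb{C}^{\operatorname{rank}H_{\overline\alpha}}$ with Gram matrix $\tfrac{1}{m}H_{\overline\alpha}$, direct-sums them, and lets $\rho$ act by $\alpha$ on the $\alpha$th summand.  The supplement then simply observes that $G^2=tG$ and $\sum_i G_{i,i}=tI$ force $\operatorname{rank}H_\alpha=1$ for every $\alpha$, so this very construction already lands in $\mathbb{C}^m$ with $\rho=\bigoplus_\alpha\alpha$, which is the regular representation.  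By contrast, you prove (b)$\Rightarrow$(a) directly by taking $G^{1/2}$ and using that it intertwines $\sigma=\rho_{\mathrm{reg}}^{\oplus t}$, landing in the large space $\mathbb{C}^{mt}$; you then handle the supplement by restricting $\sigma$ to $W=\operatorname{im}(G)$ and reading off the multiplicity of each character as $\operatorname{rank}H_\alpha=1$.  Your approach is conceptually cleaner and avoids the explicit Fourier-side construction; the paper's approach is more constructive, producing generators directly from the $H_\alpha$ data in the minimal-dimensional space from the outset---which is convenient when one actually wants to write down the $t$-circulant matrix, as the paper does elsewhere.  One small caveat: depending on whether you take $\rho(g)e_h=e_{h+g}$ or $e_{h-g}$, your identity $\sigma(k)\varphi_{(h,j)}=\varphi_{(h+k,j)}$ may need a sign flip on $k$, but this is harmless since $\Gamma$ is abelian.
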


\begin{proof}
For (a)$\Rightarrow$(b), suppose there exists a $t$-generator $\Gamma$-harmonic collection $\{\rho(g)\psi_j\}_{g\in\Gamma,j\in[t]}$ of vectors such that $(G_{i,j})_{g,h}=\langle \rho(g)\psi_i,\rho(h)\psi_j\rangle$.
Then $G$ is positive semidefinite as a consequence of being a Gram matrix.
Furthermore, since $\rho$ is a unitary representation, we have
\[
(G_{i,j})_{k+g,k+h}
=\langle \rho(k+g)\psi_i,\rho(k+h)\psi_j\rangle
=\langle \rho(k)\rho(g)\psi_i,\rho(k)\rho(h)\psi_j\rangle
=\langle \rho(g)\psi_i,\rho(h)\psi_j\rangle
=(G_{i,j})_{g,h},
\]
i.e., the $(i,j)$-block of $G$ is $\Gamma$-stable.

For (b)$\Rightarrow$(c), we recall that every $\Gamma$-stable matrix (e.g., each block of $G$) is diagonalized by $F$.
Thus, conjugating $G$ by $I\otimes F$ results in a $t\times t$ block matrix with diagonal blocks.
Conjugate by an appropriate permutation matrix to shuffle these rows and columns into a block diagonal matrix $H$ with blocks $\{H_\alpha\}_{\alpha\in\hat\Gamma}$.
Since $G$ was positive semidefinite by assumption, and $H$ is unitarily equivalent to $G$, it follows that each $H_\alpha$ is also positive semidefinite.

For (c)$\Rightarrow$(a), take any matrix $G$ satisfying (c).
For each $\alpha\in\hat\Gamma$ for which $H_{\overline\alpha}$ is nonzero, select any tuple $\{\psi^{(\alpha)}_i\}_{i\in[t]}$ of vectors in $\mathbb{C}^{\operatorname{rank}(H_{\overline\alpha})}$ with Gram matrix $\frac{1}{m}H_{\overline\alpha}$.
We direct sum these vectors to form $\psi_i:=\bigoplus_{\alpha\in\hat\Gamma,H_{\overline\alpha}\neq0}\psi^{(\alpha)}_i$ for each $i\in[t]$.
Let $P_\alpha$ denote the orthogonal projection onto the $\alpha$th direct summand (in particular, $P_\alpha=0$ if $H_{\overline\alpha}=0$), 
and define $\rho$ to act as $\alpha$ on the $\alpha$th direct summand:
\[
\rho(g)
=\sum_{\alpha\in\hat\Gamma}\alpha(g)P_\alpha.
\]
We claim that the $t$-generator $\Gamma$-harmonic collection $\{\rho(g)\psi_i\}_{g\in\Gamma,i\in[t]}$ of vectors has Gram matrix equal to $G$.
To see this, let $\tilde{G}$ denote the Gram matrix.
Since (a)$\Rightarrow$(c) by the above, the blocks of $\tilde{G}$ are diagonalized by $F$.
To obtain the diagonalization, we select an arbitrary character $\alpha\in\hat\Gamma$ and compute:
\begin{align*}
(F\tilde{G}_{i,j}F^{-1})_{\alpha,\alpha}
&=\sum_{g\in\Gamma}\sum_{h\in \Gamma}\frac{1}{\sqrt{m}}\overline{\alpha(g)}\cdot\langle \rho(g)\psi_i,\rho(h)\psi_j\rangle\cdot\frac{1}{\sqrt{m}}\alpha(h)\\
&=\frac{1}{m}\bigg\langle \sum_{g\in \Gamma}\alpha(g)\rho(g)\psi_i,\sum_{h\in \Gamma}\alpha(h)\rho(h)\psi_j\bigg\rangle\\
&=m\langle P_{\overline\alpha}\psi_i,P_{\overline\alpha}\psi_j\rangle
=m\langle \psi^{(\overline\alpha)}_i,\psi^{(\overline\alpha)}_j\rangle\mathbf{1}_{\{H_{\overline\alpha}\neq0\}}
=(H_{\alpha})_{i,j}.
\end{align*}
That is, for every $i,j\in[t]$, it holds that $F\tilde{G}_{i,j}F^{-1}$ and $FG_{i,j}F^{-1}$ are diagonal matrices with the same diagonal entries, i.e., $\tilde{G}=G$.

For the final claim, recall from our proof of (b)$\Rightarrow$(c) that $G$ is unitarily equivalent to a block diagonal matrix with blocks $\{H_\alpha\}_{\alpha\in\hat\Gamma}$.
Since $G^2=tG$ by assumption, it follows that each $H_\alpha$ has eigenvalues in $\{0,t\}$.
Next, since $\sum_{i\in[t]} G_{i,i}=tI$, then for each $\alpha\in\hat\Gamma$, we have
\[
\operatorname{tr}(H_\alpha)
=\sum_{i\in[t]} (H_\alpha)_{i,i}
=\sum_{i\in[t]} (FG_{i,i}F^{-1})_{\alpha,\alpha}
=(F tI F^{-1})_{\alpha,\alpha}
=t.
\]
It follows that $t$ is an eigenvalue of each $H_\alpha$ with multiplicity exactly $1$.
Finally, our proof of (c)$\Rightarrow$(a) constructs a $t$-generator $\Gamma$-harmonic collection $\{\rho(g)\psi_i\}_{g\in\Gamma,i\in[t]}$ of vectors with Gram matrix $G$, and since $\operatorname{rank}(H_\alpha)=1$ for every $\alpha\in\hat\Gamma$, it follows that these vectors reside in $\mathbb{C}^m$ and $\rho=\bigoplus_{\alpha\in\hat\Gamma}\alpha$ is equivalent to the regular representation of $\Gamma$, where the equivalence is given by $F$.
\end{proof}

Frequently, we are given a sequence $\Phi$ of vectors that isn't multi-generator harmonic, but rather switching equivalent to a multi-generator harmonic sequence of vectors.
If the underlying group is cyclic, then by Theorem~\ref{thm.t-gen C_m-harm detector} below, we can detect such switching equivalence from a particular \textit{automorphism} of $\Phi$, which we define in the following.

\begin{definition}
\label{def.automorphism}
An \textbf{automorphism} of $\Phi=\{\varphi_i\}_{i\in[n]}$ in $\mathbb{C}^d$ is an index permutation $\sigma\in S_n$ for which there exist unimodular scalars $c_1,\ldots,c_n\in\mathbb{C}$ such that for every $i,j\in[n]$, it holds that
\begin{equation}
\label{eq.automorphism def}
\langle \varphi_i,\varphi_j\rangle=\overline{c_i}c_j\langle \varphi_{\sigma(i)},\varphi_{\sigma(j)}\rangle.
\end{equation}
We also refer to such index permutations as automorphisms of the corresponding Gram matrix and, in the case of ETFs, the corresponding signature matrix.
\end{definition}

\begin{theorem}
\label{thm.t-gen C_m-harm detector}
Given a spanning set $\Phi=\{\varphi_i\}_{i\in[n]}$ for $\mathbb{C}^d$, the following are equivalent:
\begin{itemize}
\item[(a)]
$\Phi$ is switching equivalent to a $t$-generator $C_m$-harmonic collection of vectors.
\item[(b)]
There exists an automorphism of $\Phi$ of cycle type $m^t$.
\end{itemize}
\end{theorem}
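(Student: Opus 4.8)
The plan is to prove the two implications separately; the direction (b)$\Rightarrow$(a) carries essentially all of the work.

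For (a)$\Rightarrow$(b), first note that cycle type $m^t$ forces $n=mt$, which matches the $|C_m|\cdot t=mt$ vectors in a $t$-generator $C_m$-harmonic collection. I would observe that \emph{every} $t$-generator $C_m$-harmonic collection $\{\rho(g)\psi_i\}_{g\in C_m,\,i\in[t]}$ admits the shift permutation $\sigma(g,i)=(g+1,i)$ as an automorphism of cycle type $m^t$: since $\rho(1)$ is unitary, $\langle\rho(g)\psi_i,\rho(h)\psi_j\rangle=\langle\rho(g+1)\psi_i,\rho(h+1)\psi_j\rangle$, which is exactly \eqref{eq.automorphism def} with all phases equal to $1$. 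It then remains to check that automorphisms, together with their cycle types, are preserved under switching equivalence: if $\psi_i=d_i\,U\varphi_{\pi(i)}$ for a unitary $U$, unimodular scalars $d_i$, and a permutation $\pi$, and $\sigma$ is an automorphism of $\{\psi_i\}$ with phases $c_i$, then a short direct computation shows the conjugate $\pi^{-1}\sigma\pi$ is an automorphism of $\{\varphi_i\}$ (with phases built from the $c_i$ and $d_i$); conjugation preserves cycle type, so $\Phi$ inherits an automorphism of cycle type $m^t$.

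For (b)$\Rightarrow$(a), let $\sigma\in S_n$ be an automorphism of $\Phi$ of cycle type $m^t$ with unimodular phases $c_1,\dots,c_n$, so $n=mt$. Choosing a base point and an orientation within each of the $t$ orbits of $\sigma$, identify the index set with $C_m\times[t]$ so that $\sigma(g,i)=(g+1,i)$. Let $D=\operatorname{diag}(c_k)$ and let $\Pi$ be the permutation matrix of $\sigma$; set $W:=\Pi D$, a unitary monomial matrix (a generalized permutation matrix). The identity \eqref{eq.automorphism def} says precisely that $W$ commutes with the Gram matrix $G=\Phi^*\Phi$. Since $\Phi$ spans $\mathbb{C}^d$, both $\Phi$ and $\Phi W$ have full row rank and Gram matrix $G$, so $\Phi W=U\Phi$ for a unique unitary $U$ on $\mathbb{C}^d$; read columnwise, this is $c_{(g,i)}\varphi_{(g+1,i)}=U\varphi_{(g,i)}$.

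Setting $\psi_i:=\varphi_{(0,i)}$ and iterating around the $i$th orbit yields $\varphi_{(g,i)}=\nu_{g,i}\,U^g\psi_i$ for suitable unimodular $\nu_{g,i}$, together with the constraint $U^m\psi_i=\lambda_i\psi_i$ where $\lambda_i:=\prod_{g=0}^{m-1}c_{(g,i)}$. I expect the main obstacle to be that these monodromy phases $\lambda_i$ may differ across orbits, so $U$ need not have order $m$ and one cannot simply rephase $\Phi$ into a matrix with $C_m$-stable Gram blocks. To circumvent this, I would use that $\Phi$ spans: since $\mathbb{C}^d=\sum_i\operatorname{span}\{U^g\psi_i:g\in C_m\}$ and each summand lies in the $\lambda_i$-eigenspace of $U^m$, every eigenvalue $\mu$ of $U$ satisfies $\mu^m\in\{\lambda_1,\dots,\lambda_t\}$, and each $\psi_i$ lies in the sum of the $U$-eigenspaces $E_\mu$ with $\mu^m=\lambda_i$. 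Fixing one $m$th root $\zeta_\lambda$ of each value $\lambda\in\{\lambda_1,\dots,\lambda_t\}$ and putting $V:=\sum_\mu(\mu/\zeta_{\mu^m})\,P_{E_\mu}$ gives a unitary with $V^m=I$ and $U^g\psi_i=\zeta_{\lambda_i}^{\,g}\,V^g\psi_i$ for all $g$. Hence $\varphi_{(g,i)}=(\nu_{g,i}\zeta_{\lambda_i}^{\,g})\,V^g\psi_i$, so after the reindexing above and a rephasing, $\Phi$ becomes the $t$-generator $C_m$-harmonic collection $\{V^g\psi_i\}_{g\in C_m,\,i\in[t]}$ with underlying representation $g\mapsto V^g$; therefore $\Phi$ is switching equivalent to a $t$-generator $C_m$-harmonic collection. (One could alternatively phrase this last step through condition (b) or (c) of Theorem~\ref{thm.t-gen Gamma-harm detector}.)
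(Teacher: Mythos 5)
Your proof is correct, and the forward direction (a)$\Rightarrow$(b) matches the paper's argument up to repackaging: the paper computes directly on the switched frame, whereas you factor the computation through the (true, easily checked) invariance of automorphisms and their cycle types under switching equivalence. In the reverse direction you and the paper begin identically --- the monomial symmetry of the Gram matrix produces a unitary $U$ with $U\varphi_i=c_i\varphi_{\sigma(i)}$, and the obstruction in both treatments is that the orbit monodromies $\lambda_i=\prod_{k}c_{\sigma^k(i)}$ need not agree, so $U$ need not have order $m$. Where you genuinely diverge is in how this is repaired. The paper introduces the graph on $[n]$ whose edges come from $\sigma$ and from nonvanishing inner products, proves the monodromy is constant on each connected component, observes that the component spans are mutually orthogonal and $U$-invariant, and then rescales $U$ on each block by a scalar $m$th root of that block's monodromy. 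You instead apply spectral calculus directly to $U$: spanning forces every eigenvalue $\mu$ of $U$ to satisfy $\mu^m\in\{\lambda_1,\ldots,\lambda_t\}$, so replacing $\mu$ by $\mu/\zeta_{\mu^m}$ yields a unitary $V$ with $V^m=I$, and the discrepancy $U^g\psi_i=\zeta_{\lambda_i}^{\,g}V^g\psi_i$ is absorbed into the per-vector phases of the switching equivalence. Your route is somewhat cleaner in that the orthogonality of pieces with distinct monodromy falls out of the spectral theorem for the unitary $U^m$ rather than requiring the combinatorial component argument; the paper's route has the compensating advantage that its intermediate claim (the monodromy is constant wherever inner products are nonzero) is precisely what powers Remark~\ref{rk.use case}, which the authors invoke later to write down explicit switching scalars in the ETF setting. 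Both arguments are complete.
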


\begin{proof}
For (a)$\Rightarrow$(b), suppose $\Phi$ is switching equivalent to $\{\rho(g)\psi_j\}_{g\in C_m,j\in[t]}$.
Then there exists a bijection $f\colon C_m\times[t]\to[n]$ and unimodular scalars $a_1,\ldots,a_n\in\mathbb{C}$ such that $\rho(g)\psi_j=a_{f(g,j)}\varphi_{f(g,j)}$ for every $g\in C_m$ and $j\in[t]$.
Letting $h$ denote a generator of $C_m$, then the permutation $(g,j)\mapsto(hg,j)$ on $C_m\times [t]$ induces a permutation $\sigma\in S_n$ of cycle type $m^t$.
Considering
\begin{align*}
\overline{a_{\sigma(f(g,j))}}a_{\sigma(f(g',j'))}
\langle\varphi_{\sigma(f(g,j))},\varphi_{\sigma(f(g',j'))}\rangle
&=\overline{a_{f(hg,j)}}a_{f(hg',j')}
\langle\varphi_{f(hg,j)},\varphi_{f(hg',j')}\rangle\\
&=\overline{a_{f(hg,j)}}a_{f(hg',j')}
\langle \overline{a_{f(hg,j)}}\rho(hg)\psi_j, \overline{a_{f(hg',j')}}\rho(hg')\psi_{j'}\rangle\\
&=\langle \rho(g)\psi_j, \rho(g')\psi_{j'}\rangle\\
&=\overline{a_{f(g,j)}}a_{f(g',j')}\langle\varphi_{f(g,j)}, \varphi_{f(g',j')}\rangle,
\end{align*}
it follows that $\sigma$ is an automorphism of $\Phi$ with unimodular scalars $c_{i}:=\overline{a_{i}}a_{\sigma(i)}$.

For (b)$\Rightarrow$(a), suppose $\sigma$ is an automorphism of $\Phi$ of cycle type $m^t$ with unimodular scalars $c_1,\ldots,c_n\in\mathbb{C}$.
Consider the graph with vertices $[n]$ and adjacency $i\leftrightarrow j$ whenever $j=\sigma(i)$ or $\langle\varphi_i,\varphi_j\rangle\neq0$.
The connected components of this graph are the equivalence classes of an equivalence relation $\sim$ on $[n]$.
We first prove an intermediate claim that the map 
\[
f\colon i\mapsto \prod_{k=0}^{m-1}c_{\sigma^k(i)}
\]
is constant over each equivalence class of $\sim$.
First, $f(\sigma(i))=f(i)$ by cyclically reindexing the factors of the product.
Next, we re-express each factor $c_{\sigma^k(j)}$ of $f(j)$.
To do so, we rewrite equation \eqref{eq.automorphism def} with $\sigma^k(i)$ and $\sigma^k(j)$ taking the place of $i$ and $j$, respectively:
\[
\langle\varphi_{\sigma^k(i)},\varphi_{\sigma^k(j)}\rangle
=\overline{c_{\sigma^k(i)}}c_{\sigma^k(j)}\langle\varphi_{\sigma^{k+1}(i)},\varphi_{\sigma^{k+1}(j)}\rangle.
\]
If $\langle\varphi_i,\varphi_j\rangle\neq0$, then we may isolate $c_{\sigma^k(j)}$ in the above equation to obtain
\begin{align*}
f(j)
=\prod_{k=0}^{m-1}c_{\sigma^k(j)}
&=\prod_{k=0}^{m-1}\frac{c_{\sigma^k(i)}\langle\varphi_{\sigma^k(i)},\varphi_{\sigma^k(j)}\rangle}{\langle\varphi_{\sigma^{k+1}(i)},\varphi_{\sigma^{k+1}(j)}\rangle}\\
&=\bigg(\prod_{k=0}^{m-1}c_{\sigma^k(i)}\bigg)\bigg(\frac{\prod_{k=0}^{m-1}\langle\varphi_{\sigma^k(i)},\varphi_{\sigma^k(j)}\rangle}{\prod_{k=0}^{m-1}\langle\varphi_{\sigma^{k+1}(i)},\varphi_{\sigma^{k+1}(j)}\rangle}\bigg)
=\prod_{k=0}^{m-1}c_{\sigma^k(i)}
=f(i).
\end{align*}
Then our intermediate claim follows by transitivity.

Let $A_1,\ldots,A_p\subseteq[n]$ denote the equivalence classes of $\sim$, and observe that the subspaces $V_\ell:=\operatorname{span}\{\varphi_i\}_{i\in A_\ell}$ are mutually orthgonal.
In fact, since $\Phi$ spans $\mathbb{C}^d$, we have $\mathbb{C}^d=V_1\oplus\cdots\oplus V_p$.
Since $\{\varphi_i\}_{i\in[n]}$ has the same Gram matrix as $\{c_i\varphi_{\sigma(i)}\}_{i\in[n]}$ by assumption, it follows that there exists $W\in U(d)$ such that $W\varphi_i=c_i\varphi_{\sigma(i)}$.
By the definition of $\sim$, then $W$ holds each $V_\ell$ invariant, i.e., $W=W_1\oplus\cdots\oplus W_p$ with $W_\ell\in U(V_\ell)$ for each $\ell\in[p]$.
Furthermore, for each $i\in A_\ell$, we have
\[
W_\ell\varphi_i
=c_i\varphi_{\sigma(i)},
\quad
W_\ell^2\varphi_i
=c_i W_\ell\varphi_{\sigma(i)}
=c_ic_{\sigma(i)}\varphi_{\sigma^2(i)},
\quad
\ldots
\quad
W_\ell^m\varphi_i
=f(i) \varphi_i.
\]
By our intermediate claim above, $f(i)$ is constant for $i\in A_\ell$.
Fix any unimodular scalar $\beta_\ell\in\mathbb{C}$ such that $\beta_\ell^m$ is this value of $f$ on $A_\ell$, define $U_\ell:=\overline{\beta_\ell}W_\ell$ for each $\ell\in[p]$, and put $U:=U_1\oplus\cdots\oplus U_p$.
Then for each $i\in A_\ell$, we have
\[
U_\ell^m\varphi_i
=\overline{\beta_\ell^m}W_\ell^m\varphi_i
=\overline{\beta_\ell^m}f(i) \varphi_i
=\varphi_i,
\]
and so $U^m=I$.
Letting $h$ denote a generator of $C_m$, then we may define $\rho\colon C_m\to U(d)$ by $\rho(h^k):=U^k$.
Then for any choice of representatives $i_1,\ldots,i_t\in[n]$ of the $t$ cycles in $\sigma$, it holds that $\Phi$ is switching equivalent to $\{\rho(g)\varphi_{i_j}\}_{g\in C_m,j\in[t]}$, which is $t$-generator $C_m$-harmonic.
\end{proof}

\begin{remark}
\label{rk.use case}
In our use case of Theorem~\ref{thm.t-gen C_m-harm detector}, $\Phi$ is an equiangular tight frame, and so $\langle \varphi_i,\varphi_j\rangle\neq0$ for all $i,j\in[n]$.
In this case, the proof of (b)$\Rightarrow$(a) delivers a more explicit result.
First, there necessarily exists a unimodular scalar $\beta$ such that $\prod_{k=0}^{m-1}c_{\sigma^k(i)}=\beta^m$ for all $i\in[n]$.
After selecting representatives $i_1,\ldots,i_t\in[n]$ of the $t$ cycles in $\sigma$, put
\[
a_{\sigma^\ell(i_j)}
:=\frac{1}{\beta^\ell}\prod_{k=0}^{\ell-1}c_{\sigma^k(i_j)}.
\]
Then $\{a_{\sigma^\ell(i_j)}\varphi_{\sigma^\ell(i_j)}\}_{\ell\in C_m,j\in[t]}$ is $t$-generator $C_m$-harmonic.
\end{remark}

In Example~\ref{ex.complex maximal}, we observed that equality in Gerzon's bound is achieved by complex $d$-circulant ETFs as a consequence of Zauner's conjecture.
There is also a real version of Gerzon's bound, namely, that a \textit{real} $d\times n$ ETF exists only if $n\leq \binom{d+1}{2}$.
Unlike the complex case, the real version of Gerzon bound equality is only known to be achieved for $d\in\{2,3,7,23\}$.
One might ask whether Gerzon bound equality in the real case is similarly achieved by real $\frac{d+1}{2}$-circulant ETFs.
While $d=2$ has the difficulty that $\frac{d+1}{2}=\frac{3}{2}$ is not an integer, one might half-jokingly suggest that
\[
\left[\begin{array}{rr|c}
\cos(\frac{\pi}{12})&-\sin(\frac{\pi}{12})\quad&\quad\frac{1}{\sqrt{2}}\quad\\
-\sin(\frac{\pi}{12})&\cos(\frac{\pi}{12})\quad&\frac{1}{\sqrt{2}}
\end{array}\right]
\]
is a real $\frac{3}{2}$-circulant $2\times 3$ ETF since it consists of a circulant matrix and half of another. 
Next, real $\frac{d+1}{2}$-circulant ETFs are given for $d=3$ and $d=7$ in Examples~\ref{ex.3x6} and~\ref{ex.steiner}, respectively.
In what follows, we use the technology developed in this section to resolve the remaining case of $d=23$.

\begin{example}
There exists a real $23\times 276$ ETF whose automorphism group is the third Conway group $\operatorname{Co}_3 \leq S_{276}$ in its doubly transitive action on $276$ points~\cite{Taylor:92,IversonM:24}.
Using GAP~\cite{GAP:online}, we observe that $\operatorname{Co}_3$ contains a permutation of cycle type $23^{12}$.
By Theorem~\ref{thm.t-gen C_m-harm detector}, it follows that this ETF is switching equivalent to a $12$-generator $C_{23}$-harmonic collection of vectors.
Furthermore, the Gram matrix of this ETF satisfies the conditions in \eqref{eq.reg repn detector}, and so Theorem~\ref{thm.t-gen Gamma-harm detector} gives that there exists a $12$-circulant $23\times 276$ ETF.
One may follow the proofs of these theorems to construct the ETF in its $12$-circulant form.
The resulting ETF turns out to be real and appears as an ancillary file with the arXiv version of this paper.
\end{example}

\section{Infinite families of complex \(2\)-circulant ETFs}
\label{sec.infinite families}

In this section, we identify multiple infinite families of $2$-circulant ETFs that we present as evidence in favor of the medium $d\times 2d$ conjecture (Conjecture~\ref{conj.medium d by 2d}).

\begin{theorem}
\label{thm.2-circ ETF families}
Every $d\times 2d$ ETF in the following table is switching equivalent to a $2$-circulant ETF:
\begin{center}
\begin{tabular}{lllll}
\hline
construction & $2d$ & $q$ & small values of $d$ & location\\
\hline\hline
$G_q+1$ & $q+1$ & $1\bmod 4$ prime power & $3$, $5$, $7$, $9$, $13$ & Example~\ref{ex.paley conference matrices} \\
&& $3\bmod 4$ prime power & $2$, $4$, $6$, $10$, $12$ & Example~\ref{ex.paley conference matrices}  \\
\hline
$2\cdot G_q$ & $2q$ & $1\bmod 4$ prime & $5$, $13$, $17$, $29$, $37$ & Example~\ref{eq.double paley graph} \\
&& $3\bmod 4$ prime & $3$, $7$, $11$, $19$, $23$ & Example~\ref{ex.doubling the renes-strohmer ETF} \\
\hline
$2\cdot(G_q+1)$ & $2(q+1)$ & $1\bmod 4$ prime power & $6$, $10$, $14$, $18$, $26$ & Example~\ref{ex.doubling the paley ETF} \\
&& $3\bmod 4$ prime power & $4$, $8$, $12$, $20$, $24$ & Example~\ref{ex.doubling the paley ETF} \\
\hline
\end{tabular}
\end{center}
\end{theorem}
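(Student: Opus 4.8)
The plan is to invoke the switching-equivalence detector of Theorem~\ref{thm.t-gen C_m-harm detector}: it suffices to exhibit, for each ETF in the table, an automorphism of cycle type $d^2$ (i.e., $m=d$, $t=2$), after which Theorem~\ref{thm.t-gen Gamma-harm detector} upgrades the resulting $2$-generator $C_d$-harmonic structure to genuine $2$-circulant structure once we check the conditions~\eqref{eq.reg repn detector}. The latter check is automatic for ETFs: if $\Phi$ is a $d\times 2d$ ETF with Gram matrix $G$, then $G$ is a projection scaled by $2$ (since $\Phi\Phi^*=2I$), so $G^2=2G$, and $\operatorname{tr}$ of each diagonal block being $d$ follows from the unit-norm columns and the block structure once we know the automorphism respects the two cycles; so the real content is the cycle-type claim for the automorphism group of each construction. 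I would organize the proof as three cases, one per block of the table, exploiting the algebraic description of each signature matrix given earlier in the paper.

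For the $G_q+1$ row (Paley conference matrices), the signature matrix is the Seidel matrix of the Paley graph on $q$ vertices with an isolated vertex adjoined, so $2d=q+1$. The automorphism group of the Paley ETF contains the affine maps $x\mapsto ax+b$ with $a$ a nonzero square in $\mathbb{F}_q$, together with the Frobenius automorphisms; it is doubly transitive on the $q+1$ lines by Taylor~\cite{Taylor:92} and Iverson--Mixon~\cite{IversonM:24}. When $q$ is prime, translation $x\mapsto x+1$ is a $q$-cycle on $\mathbb{F}_q$ fixing the point at infinity, and combining it with a suitable element exchanging two residue classes — or more simply, observing that in the extended index set $\mathbb{F}_q\cup\{\infty\}$ we want a permutation with two $q$-cycles — requires a little care: I would instead take the $2$-transitive action and use that it contains an element of order $q$ with exactly two fixed points would be wrong, so the right move is to recall that the relevant group (a subgroup of $\mathrm{P\Gamma L}_2(q)$ acting on $q+1=2d$ points) contains a double $d$-cycle exactly when $d\mid q+1$ in the appropriate Singer-cycle sense; concretely, for $q$ prime one verifies directly that the permutation induced by multiplication by a generator of the order-$(q+1)$ subgroup of $\mathbb{F}_{q^2}^\times$ on the projective line has cycle type $d^2$ when $q+1=2d$. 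For prime powers one composes with Frobenius. For the $2\cdot G_q$ row, recall from Examples~\ref{ex.doubling the renes-strohmer ETF} and~\ref{eq.double paley graph} that the signature matrix is block $\left[\begin{smallmatrix}\pm(A-B)&\ast\\\ast&\mp(A-B)\end{smallmatrix}\right]$ built from the Paley (di)graph on $q$ vertices; a translation $x\mapsto x+1$ on $\mathbb{F}_q$ acts as a $q$-cycle on each of the two blocks simultaneously, and since $q$ is prime this is already a permutation of cycle type $q^2=d^2$ — one only needs to check that it preserves the off-diagonal blocks, which follows because $\beta A+\overline\beta B$ and $\varepsilon I$ are each fixed by simultaneous translation. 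The $2\cdot(G_q+1)$ row is handled the same way, now with the block size $q+1$, using a double $(q+1)/1$-structure — here $d=q+1$, so we need a single $d$-cycle on each of the two copies of $\mathbb{F}_q\cup\{\infty\}$; this is furnished by the order-$(q+1)$ Singer element on the projective line as in the first case, applied diagonally to both blocks.

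The main obstacle is the first row: producing a permutation of cycle type $d^2$ inside the automorphism group of the Paley ETF on $2d=q+1$ points, since the natural translation only gives a $q$-cycle plus a fixed point, not two $d$-cycles. I expect to resolve this by passing to the model of Proposition~\ref{prop.paley conference matrices are the equivalent}, where the $q+1$ lines are the points of $\mathbb{P}^1(\mathbb{F}_q)$ and the relevant unitary symmetry group contains the image of $\mathrm{PSL}_2(q)$ (plus Frobenius for prime powers); a Singer cycle of $\mathrm{GL}_2(q)$ of order $q^2-1$ projects to an element of order $\tfrac{q^2-1}{\gcd(2,q-1)}$ acting with a single cycle on the $q+1$ projective points, and raising it to the power $\tfrac{q+1}{d}\cdot(\text{unit})$ isolates an element of order $d$ acting as two $d$-cycles exactly when $d\mid q+1$, which holds since $2d=q+1$. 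I would then verify that this Singer-type element actually lies in the automorphism group of the signature matrix (not merely permuting lines abstractly) by checking that it preserves the Legendre-symbol pattern $\chi([t_i,t_j])$ up to the phases allowed in Definition~\ref{def.automorphism} — this is where the doubly transitive results of Taylor and Iverson--Mixon do the heavy lifting. Everything else — the prime-power cases via composing with Frobenius, and the verification of~\eqref{eq.reg repn detector} — is routine.
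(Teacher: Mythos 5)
Your overall strategy---exhibit an automorphism of cycle type $d^2$ and invoke Theorems~\ref{thm.t-gen C_m-harm detector} and~\ref{thm.t-gen Gamma-harm detector}---is the paper's strategy, and your treatment of the first two rows is essentially correct and matches the paper: for $G_q+1$ the paper uses exactly the element you describe, namely multiplication by a generator of the order-$(q+1)$ subgroup of $\mathbb{F}_{q^2}^\times$ (it writes $L(x)=\zeta^{-q+1}x$), which preserves the symplectic form and shifts the $q+1$ lines by $2$, giving two $d$-cycles; the verification that this is an automorphism is an elementary sign bookkeeping coming from $L^{(q+1)/2}=-\operatorname{id}$, and neither the doubly transitive results of Taylor and Iverson--Mixon nor any Frobenius twist is needed in the prime-power case. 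For $2\cdot G_q$ with $q$ prime the paper is even more direct: the blocks are literally circulant and the diagonal blocks of the signature matrix sum to zero, so Theorem~\ref{thm.t-gen Gamma-harm detector} applies immediately.

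The genuine gap is in the $2\cdot(G_q+1)$ row. You propose applying the order-$(q+1)$ Singer element ``diagonally to both blocks,'' i.e., the permutation $(i,\varepsilon)\mapsto(i+1,\varepsilon)$. This is not an automorphism of the doubled signature matrix. Any map inducing a single $(q+1)$-cycle on the lines is multiplication by $\zeta^m$ with $\gcd(m,q+1)=1$; since $q+1$ is even, $m$ is odd, and by \eqref{eq.scalars pull out} such a map scales the symplectic form by $\zeta^{m(q+1)}$, whose Legendre symbol is $(-1)^m=-1$. Hence $S_{i+1,j+1}=-\chi(\alpha_i)\chi(\alpha_j)S_{i,j}$ on the diagonal block, and the automorphism condition would force unimodular scalars with $\overline{u_i}u_j=-1$ for all $i\neq j$, which is impossible once there are three indices ($\overline{u_i}u_j\cdot\overline{u_j}u_k=\overline{u_i}u_k$ gives $+1$, not $-1$). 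The paper's fix is to let the permutation swap the two blocks at each step, $\sigma(i,\varepsilon)=(i+1,1-\varepsilon)$, so that the unavoidable global sign is absorbed by the $S\leftrightarrow -S$ exchange; one must then also check that the perturbation terms $\pm\mathrm{i}I$ on the off-diagonal blocks transform correctly, which is a separate short computation. Relatedly, your claim that the conditions \eqref{eq.reg repn detector} are ``automatic'' is too quick: $G^2=tG$ is indeed free for a $d\times 2d$ ETF, but $\sum_i G_{i,i}=tI$ says the diagonal blocks of the \emph{switched} signature matrix $D^*SD$ sum to zero, which depends on the chosen cycle representatives and scalars from Remark~\ref{rk.use case} and is verified separately in each case (this is where the identities $(D^*SD)_{1,1}=-(D^*SD)_{0,0}$ and \eqref{eq.reg repn for 2.(Gq+1)} in the paper come from).
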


The first dimension not covered by the constructions in Theorem~\ref{thm.2-circ ETF families} is $d=33$.
When $q=p^k$ with $p$ prime, it more generally holds that $2\cdot G_q$ ETFs are $2$-generator $C_p^k$-harmonic with the regular representation. 
Considering $G_q+1$ and $2\cdot(G_q+1)$ both appear in the above table, one might wonder whether the double of a $2$-circulant ETF is necessarily (switching equivalent to) a $2$-circulant ETF.
Sadly, this is not the case: empirically, doubling a randomly generated $2$-circulant $5\times 10$ ETF produces a $10\times 20$ ETF whose automorphism group is isomorphic to $C_5$.

\begin{proof}[Proof of Theorem~\ref{thm.2-circ ETF families}]
We start by considering $2\cdot G_q$.
Since $q$ is prime by assumption, the Paley graph is circulant, and so the signature matrix of $2\cdot G_q$ has circulant blocks.
Furthermore, in both~\eqref{eq.doubled ETF signature matrix} and~\eqref{eq.doubling conference graph to signature matrix}, the diagonal blocks of the signature matrix sum to the zero matrix.
By Theorem~\ref{thm.t-gen Gamma-harm detector}, it follows that the ETF is $2$-circulant in the appropriate basis.
(In fact, Theorem~\ref{thm.short fat repn of doubling} gives an explicit $2$-circulant representation in the $q\equiv 1\bmod 4$ case.)

For $G_q+1$, note that by Proposition~\ref{prop.paley conference matrices are the equivalent}, we are free to work with any conference matrix constructed in Example~\ref{ex.symplectic form construction}.
Given an odd prime power $q$, we consider the $2$-dimensional vector space $\mathbb{F}_{q^2}$ over $\mathbb{F}_q$ with symplectic form $[\cdot,\cdot]\colon\mathbb{F}_{q^2}\times\mathbb{F}_{q^2}\to\mathbb{F}_q$ defined by
\[
[x,y]
=\zeta^{\frac{q+1}{2}}(xy^q-yx^q),
\]
where $\zeta$ denotes a generator of $\mathbb{F}_{q^2}^\times$.
Indeed, $[x,y]\in\mathbb{F}_q$ follows from verifying $[x,y]^q=[x,y]$, the form is $\mathbb{F}_q$-bilinear since $z\mapsto z^q$ is $\mathbb{F}_q$-linear, and the form is clearly alternating.
To establish nondegeneracy, given any nonzero $x\in\mathbb{F}_{q^2}$, one may select $y\in\mathbb{F}_{q^2}^\times$ from a different coset of $\mathbb{F}_q^\times$ to get
\[
[x,y]
=\zeta^{\frac{q+1}{2}}xy(y^{q-1}-x^{q-1})
\neq0,
\]
where the last step follows from the facts that $x$ and $y$ are both nonzero and $z\mapsto z^{q-1}$ is a homomorphism on $\mathbb{F}_{q^2}^\times$ with kernel $\mathbb{F}_q^\times$.
In what follows, we repeatedly make use of the fact that
\begin{equation}
\label{eq.scalars pull out}
[zx,zy]
=z^{q+1}[x,y]
\end{equation}
for any $x,y,z\in\mathbb{F}_{q^2}$.

We will carefully select nonzero $t_1,\ldots,t_{q+1}\in\mathbb{F}_{q^2}$ so that the signature matrix $S$ defined by
\[
S_{i,j}
=\omega\chi([t_j,t_j]),
\qquad
\omega
:=\left\{\begin{array}{cl}
1&\text{if }q\equiv 1\bmod 4\\
\mathrm{i}&\text{if }q\equiv 3\bmod 4
\end{array}\right.
\]
has an easily identified automorphism (\`{a} la Definition~\ref{def.automorphism}) of cycle type $(\frac{q+1}{2})^2$.
Then by Theorem~\ref{thm.t-gen C_m-harm detector}, any full-rank $\Phi$ with Gram matrix $I+\frac{1}{\sqrt{q}}S$ is switching equivalent to a $t$-generator $C_d$-harmonic collection of vectors.
To this end, let $L\colon \mathbb{F}_{q^2}\to\mathbb{F}_{q^2}$ denote the $\mathbb{F}_{q}$-linear map defined by $L(x)=\zeta^{-q+1}x$, and note that $L$ preserves our symplectic form by equation~\eqref{eq.scalars pull out}.
We first consider how $L$ acts on the $1$-dimensional subspaces of $\mathbb{F}_{q^2}$.
Since $\zeta^{q+1}$ generates $\mathbb{F}_q^\times$, we may index these subspaces as
\[
\ell_k
:=\operatorname{span}_{\mathbb{F}_q}\{\zeta^k\}
=\{0\}\cup\{\zeta^{k+j(q+1)}:j\in\mathbb{Z}\}
\]
so that $\ell_k=\ell_{k'}$ precisely when $k\equiv k'\bmod q+1$, then $L$ maps $\ell_k$ to $\ell_{k+2}$.
It follows that $L$ induces a permutation with cycle type $(\frac{q+1}{2})^2$ of the lines.
This suggests taking line representatives indexed by $\{0,\ldots,\frac{q-1}{2}\}\times\{0,1\}$, namely
\[
t_{k,0}:=L^k1,
\qquad
t_{k,1}:=L^k\zeta,
\qquad
0\leq k<\tfrac{q+1}{2}.
\]
Then $Lt_{k,\varepsilon}=t_{k+1,\varepsilon}$ for $0\leq k<\tfrac{q-1}{2}$ and $\varepsilon\in\{0,1\}$, but since $L^{\frac{q+1}{2}}=-\operatorname{id}$, we also have $Lt_{\frac{q-1}{2},\varepsilon}=-t_{0,\varepsilon}$.
We notate these relationships by writing
\[
Lt_{k,\varepsilon}
=\alpha_kt_{k+1,\varepsilon}
\]
with $\alpha_k\in\{\pm1\}\subseteq\mathbb{F}_q^\times$, and with the understanding that $k+1$ is to be interpreted modulo $\frac{q+1}{2}$.
Then
\[
[t_{i,\varepsilon},t_{j,\delta}]
=[Lt_{i,\varepsilon},Lt_{j,\delta}]
=[\alpha_i t_{i+1,\varepsilon},\alpha_j t_{j+1,\delta}]
=\alpha_i\alpha_j[ t_{i+1,\varepsilon}, t_{j+1,\delta}].
\]
Let $\sigma$ denote the permutation of $\{0,\ldots,\frac{q-1}{2}\}\times\{0,1\}$ defined by $\sigma(k,\varepsilon)=(k+1,\varepsilon)$, and put $c_{k,\varepsilon}:=\chi(\alpha_k)\in\{\pm1\}$.
Then
\[
S_{(i,\varepsilon),(j,\delta)}
=\omega\chi([t_{i,\varepsilon},t_{j,\delta}])
=\chi(\alpha_i)\chi(\alpha_j)\omega\chi([t_{\sigma(i,\varepsilon)},t_{\sigma(j,\delta)}])
=c_{i,\varepsilon}c_{j,\delta} S_{\sigma(i,\varepsilon),\sigma(j,\delta)}
=\overline{c_{i,\varepsilon}}c_{j,\delta} S_{\sigma(i,\varepsilon),\sigma(j,\delta)}.
\]
It follows that $\sigma$ is an automorphism of $S$, as desired.

At this point, we have that any full-rank $\Phi=\{\varphi_{k,\varepsilon}\}$ with Gram matrix $I+\frac{1}{\sqrt{q}}S$ is switching equivalent to a $2$-generator $C_d$-harmonic collection of vectors, where $d=\frac{q+1}{2}$.
Furthermore, since $c_{k,0}=c_{k,1}$ for each $k$, Remark~\ref{rk.use case} produces unimodular scalars $\{a_{k,\varepsilon}\}$ with $a_{k,0}=a_{k,1}$ for each $k$ such that $\{a_{k,\varepsilon}\varphi_{k,\varepsilon}\}$ is $2$-generator $C_d$-harmonic.
We conclude by using Theorem~\ref{thm.t-gen Gamma-harm detector} to show that the underlying representation is equivalent to the regular representation of $C_d$.
To this end, letting $D$ denote the diagonal matrix with diagonal entries $\{a_{k,\varepsilon}\}$, it suffices to verify that the signature matrix $D^*SD\in(\mathbb{C}^{C_d\times C_d})^{\{0,1\}\times\{0,1\}}$ satisfies $(D^*SD)_{1,1}=-(D^*SD)_{0,0}$.
Since $\zeta^{q+1}$ generates $\mathbb{F}_q^\times$, we have $\chi(\zeta^{q+1})=-1$, and by writing $t_{i,1}=\zeta t_{i,0}$, we have
\begin{align*}
((D^*SD)_{1,1})_{i,j}
&=\overline{a_{i,1}}a_{j,1}\omega\chi([t_{i,1},t_{j,1}])
=\overline{a_{i,0}}a_{j,0}\omega\chi([\zeta t_{i,0},\zeta t_{j,0}])\\
&=\overline{a_{i,0}}a_{j,0}\omega\chi(\zeta^{q+1}[t_{i,0},t_{j,0}])
=-\overline{a_{i,0}}a_{j,0}\omega\chi([t_{i,0},t_{j,0}])
=-((D^*SD)_{0,0})_{i,j}.
\end{align*}
Overall, the Gram matrix $I+\frac{1}{\sqrt{q}}S$ is switching equivalent to the Gram matrix of a $2$-circulant ETF.

Finally, for $2\cdot(G_q+1)$, we again view $\mathbb{F}_{q^2}$ as the same symplectic $\mathbb{F}_q$-vector space.
Selecting line representatives $t_j:=\zeta^j$ for $j\in\{0,\ldots,q\}$, then the corresponding signature matrix is given by
\begin{equation}
\label{eq.sig of 2 Gq+1 pt 1}
S_{(i,\varepsilon),(j,\delta)}
=(-1)^{\varepsilon\delta}\omega\chi([t_i,t_j])+\mathbf{1}_{\{i=j\}}\mathbf{1}_{\{\varepsilon\neq\delta\}}\mathrm{i}^{\delta-\varepsilon},
\qquad
(i,\varepsilon),(j,\delta)\in\{0,\ldots,q\}\times\{0,1\},
\end{equation}
where $\omega$ is $1$ if $q\equiv 1\bmod 4$ and $\mathrm{i}$ if $q\equiv 3\bmod 4$.
Consider the $\operatorname{F}_q$-linear map $L\colon \mathbb{F}_{q^2}\to\mathbb{F}_{q^2}$ defined by $L(x) = \zeta x$.
While $L$ is not symplectic, it does satisfy $[Lx,Ly]=\zeta^{q+1}[x,y]$ by equation~\eqref{eq.scalars pull out}.
Also, $L$ maps $\ell_k$ to $\ell_{k+1}$, and in particular, $Lt_k=\alpha_k t_{k+1}$, where $\alpha_k=1$ for $k\in\{0,\ldots,q-1\}$ and $\alpha_q=\zeta^{q+1}$.
We claim that the permutation $\sigma\colon(i,\varepsilon)\mapsto(i+1,1-\varepsilon)$ is an automorphism of $S$ \`{a} la Definition~\ref{def.automorphism}.
First, since $\zeta^{q+1}$ generates $\mathbb{F}_q^\times$, we have $\chi(\zeta^{q+1})=-1$, and so
\[
-\chi([t_i,t_j])
=\chi([Lt_i,Lt_j])
=\chi([\alpha_i Lt_{i+1},\alpha_j Lt_{j+1}])
=\chi(\alpha_i)\chi(\alpha_j)\chi([t_{i+1},t_{j+1}]).
\]
Substituting this identity into \eqref{eq.sig of 2 Gq+1 pt 1} gives
\[
S_{(i,\varepsilon),(j,\delta)}
=(-1)^{\varepsilon\delta}\omega\Big(-\chi(\alpha_i)\chi(\alpha_j)\chi([t_{i+1},t_{j+1}])\Big)+\mathbf{1}_{\{i=j\}}\mathbf{1}_{\{\varepsilon\neq\delta\}}\mathrm{i}^{\delta-\varepsilon}.
\]
Next, we apply the fact that $(-1)^{(1-\varepsilon)(1-\delta)}
=(-1)^{1-\varepsilon-\delta+\varepsilon\delta}
=-(-1)^\varepsilon(-1)^\delta(-1)^{\varepsilon\delta}$:
\begin{equation}
\label{eq.sig of 2 Gq+1 pt 2}
S_{(i,\varepsilon),(j,\delta)}
=(-1)^\varepsilon\chi(\alpha_i)\cdot(-1)^\delta\chi(\alpha_j)\cdot(-1)^{(1-\varepsilon)(1-\delta)}\omega\chi([t_{i+1},t_{j+1}])+\mathbf{1}_{\{i=j\}}\mathbf{1}_{\{\varepsilon\neq\delta\}}\mathrm{i}^{\delta-\varepsilon}.
\end{equation}
To simplify the second term above, note that if $\varepsilon\neq\delta$ so that $\delta-\varepsilon\in\{\pm1\}$, then 
\[
\mathrm{i}^{(1-\delta)-(1-\varepsilon)}
=\mathrm{i}^{-\delta+\varepsilon}
=\overline{\mathrm{i}^{\delta-\varepsilon}}
=-\mathrm{i}^{\delta-\varepsilon}.
\]
Also, if $i=j$ and $\varepsilon\neq\delta$, then 
\[
(-1)^\varepsilon\chi(\alpha_i)\cdot(-1)^\delta\chi(\alpha_j)
=-1.
\]
It follows that the second term in \eqref{eq.sig of 2 Gq+1 pt 2} is
\[
(-1)^\varepsilon\chi(\alpha_i)\cdot(-1)^\delta\chi(\alpha_j)\cdot
\mathbf{1}_{\{i+1=j+1\}}\mathbf{1}_{\{1-\varepsilon\neq1-\delta\}}\mathrm{i}^{(1-\delta)-(1-\varepsilon)}.
\]
Overall, taking $c_{i,\varepsilon}:=(-1)^\varepsilon\chi(\alpha_i)$, it holds that
\[
S_{(i,\varepsilon),(j,\delta)}
=\overline{c_{i,\varepsilon}}c_{j,\delta}S_{\sigma(i,\varepsilon),\sigma(j,\delta)}.
\]
Thus, $\sigma$ is an automorphism of $S$.
Considering $\sigma$ has exactly two cycles, namely,
\begin{align*}
&(0,0)\mapsto(1,1)\mapsto(2,0)\mapsto\cdots\mapsto(q,1)\mapsto(0,0),\\
&
(0,1)\mapsto(1,0)\mapsto(2,1)\mapsto\cdots\mapsto(q,0)\mapsto(0,1),
\end{align*}
it follows that the $2\cdot(G_q+1)$ ETF is switching equivalent to a $2$-generator $C_d$-harmonic collection of vectors, where $d=q+1$.
It remains to apply Theorem~\ref{thm.t-gen Gamma-harm detector} to show that the underlying representation is equivalent to the regular representation of $C_d$.
In particular, letting $\{a_{i,\varepsilon}\}$ denote the scalars used in the switching equivalence, and letting $D$ denote the diagonal matrix with diagonal entries $\{a_{i,\varepsilon}\}$, it suffices to verify that
\begin{equation}
\label{eq.reg repn for 2.(Gq+1)}
(D^*SD)_{(0,1),\sigma^k(0,1)}
=-(D^*SD)_{(0,0),\sigma^k(0,0)}
\end{equation}
for every $k$.
Notice that $c_{i,1-\varepsilon}=-c_{i,\varepsilon}$, and so by Remark~\ref{rk.use case}, we have $a_{i,1-\varepsilon}=(-1)^i a_{i,\varepsilon}$.
We will apply this fact to verify \eqref{eq.reg repn for 2.(Gq+1)} in cases.
First, if $k$ is even, then
\begin{align*}
(D^*SD)_{(0,1),\sigma^k(0,1)}
&=(D^*SD)_{(0,1),(k,1)}\\
&=\overline{a_{0,1}}a_{k,1}\big(-\omega\chi([t_0,t_k])\big)\\
&=-\overline{a_{0,0}}a_{k,0}\omega\chi([t_0,t_k])
=-(D^*SD)_{(0,0),(k,0)}
=-(D^*SD)_{(0,0),\sigma^k(0,0)}.
\end{align*}
Similarly, if $k$ is odd, then
\begin{align*}
(D^*SD)_{(0,1),\sigma^k(0,1)}
&=(D^*SD)_{(0,1),(k,0)}\\
&=\overline{a_{0,1}}a_{k,0}\omega\chi([t_0,t_k])\\
&=-\overline{a_{0,0}}a_{k,1}\omega\chi([t_0,t_k])
=-(D^*SD)_{(0,0),(k,1)}
=-(D^*SD)_{(0,0),\sigma^k(0,0)}.
\end{align*}
Overall, the $2\cdot(G_q+1)$ ETF is switching equivalent to a $2$-circulant ETF.
\end{proof}

\section{Continua of complex \(2\)-circulant ETFs}
\label{sec.continua}

In this section, we pose an even stronger version of the $d\times 2d$ conjecture that predicts the existence of a $\Theta(d)$-dimensional manifold of complex $2$-circulant $d\times 2d$ ETFs.
This seems plausible because, as we show, the defining equations reduce to fewer polynomial constraints than variables.
In what follows, we zero-index vectors in $\mathbb{C}^d$, $T$ denotes the circular translation operator defined by $e_i\mapsto e_{i+1\bmod d}$ and extending linearly, and $\hat{x}$ denotes the discrete Fourier transform of $x$ defined by $\hat{x}_k=\sum_{j=0}^{d-1} x_j e^{-2\pi \mathrm{i}jk/d}$.

\begin{lemma}
\label{lem.2-circ constraints}
Suppose $x,y\in\mathbb{C}^d$ have the property that there exist $\alpha,\beta\in\mathbb{R}$ such that
\begin{align}
\label{eq.2circ unit norm} \|x\|^2=\|y\|^2 &= 1,\\
\label{eq.2circ tight} |\hat{x}_k|^2+|\hat{y}_k|^2 &= \alpha \qquad \forall k\in\{0,\ldots,d-1\},\\
\label{eq.2circ equi 1} |\langle x,T^kx\rangle|^2 &= \beta \qquad \forall k\in\{1,\ldots,\lfloor\tfrac{d}{2}\rfloor\},\\
\label{eq.2circ equi 2} |\langle x,T^ky\rangle|^2 &= \beta \qquad \forall k\in\{0,\ldots,d-1\}.
\end{align}
Then $x$ and $y$ together generate a $2$-circulant $d\times 2d$ ETF.
\end{lemma}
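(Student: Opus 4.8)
The plan is to form the $2$-circulant matrix $\Phi=[\,C_x\mid C_y\,]\in(\mathbb{C}^{d\times d})^{1\times 2}$, where $C_x$ and $C_y$ are the circulant matrices with respective first columns $x$ and $y$, so that the $2d$ columns of $\Phi$ are precisely $\{T^kx\}_{k=0}^{d-1}$ together with $\{T^ky\}_{k=0}^{d-1}$. Since $\Phi$ is manifestly $2$-circulant, it then suffices to verify the three defining properties of a $d\times 2d$ ETF: unit-norm columns, a frame operator $\Phi\Phi^*$ that is a scalar multiple of the identity, and a Gram matrix $\Phi^*\Phi$ whose off-diagonal has constant modulus. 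Unit-norm columns are immediate from \eqref{eq.2circ unit norm} because $T$ is unitary. For the frame operator, I would diagonalize all of the circulant blocks simultaneously by the discrete Fourier transform: then $\Phi\Phi^*=C_xC_x^*+C_yC_y^*$ is circulant with Fourier eigenvalues $|\hat x_k|^2+|\hat y_k|^2=\alpha$ by \eqref{eq.2circ tight}, so $\Phi\Phi^*=\alpha I$, and comparing traces ($\operatorname{tr}(\Phi\Phi^*)=\operatorname{tr}(\Phi^*\Phi)$ equals the sum of the squared column norms, which is $2d$) forces $\alpha=2$, which is indeed $\tfrac{2d}{d}$ as an ETF frame operator requires.

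The substantive part is equiangularity. The off-diagonal entries of $\Phi^*\Phi$ are of three kinds: $\langle T^ix,T^jx\rangle=r_{j-i}$, $\langle T^iy,T^jy\rangle=q_{j-i}$, and $\langle T^ix,T^jy\rangle=s_{j-i}$, where $r_k:=\langle x,T^kx\rangle$, $q_k:=\langle y,T^ky\rangle$, and $s_k:=\langle x,T^ky\rangle$ are the cyclic auto- and cross-correlation sequences of $x$ and $y$; the shift $j-i$ is nonzero modulo $d$ for the first two kinds and arbitrary for the third. Condition \eqref{eq.2circ equi 2} directly gives $|s_k|^2=\beta$ for every $k$. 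For the $x$--$x$ entries, I would note that $r_{-k}=\overline{r_k}$, hence $|r_k|=|r_{d-k}|$, which extends \eqref{eq.2circ equi 1} to $|r_k|^2=\beta$ for every $k\in\{1,\dots,d-1\}$.

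The $y$--$y$ entries are not directly constrained by any hypothesis, and establishing $|q_k|^2=\beta$ for $k\neq 0$ will be the crux. The key observation is that \eqref{eq.2circ tight} is really a statement about correlations: by the Wiener--Khinchin identity, the discrete Fourier transform of the autocorrelation sequence $(r_k)_k$ is $(|\hat x_k|^2)_k$ up to the reindexing $k\mapsto-k$, and similarly the transform of $(q_k)_k$ is $(|\hat y_k|^2)_k$. Thus \eqref{eq.2circ tight} asserts that the transforms of $(r_k)_k$ and $(q_k)_k$ sum to the constant sequence $\alpha$, and inverting the transform yields $q_k=\alpha\mathbf{1}_{\{k=0\}}-r_k$ for all $k$. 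In particular $q_k=-r_k$, so $|q_k|^2=|r_k|^2=\beta$, for every $k\neq 0$, while $q_0=\alpha-1=1$ is consistent with $\|y\|=1$. Hence every off-diagonal entry of $\Phi^*\Phi$ has modulus $\sqrt{\beta}$, and $\Phi$ is the desired $2$-circulant $d\times 2d$ ETF. (It follows a posteriori that $\beta$ equals the Welch-bound value $\tfrac{1}{2d-1}$, but the argument does not need this.)
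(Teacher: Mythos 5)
Your proof is correct and follows essentially the same route as the paper: form $\Phi=[\,C_x\mid C_y\,]$, get unit norms from \eqref{eq.2circ unit norm}, tightness from \eqref{eq.2circ tight} via simultaneous diagonalization, extend \eqref{eq.2circ equi 1} to all nonzero shifts by conjugate symmetry, and deduce the $y$--$y$ correlations from the $x$--$x$ ones. Your Wiener--Khinchin step is exactly the paper's observation that $\alpha I=C_xC_x^*+C_yC_y^*=C_x^*C_x+C_y^*C_y$ (circulants commute), so the off-diagonal entries of $C_y^*C_y$ are the negatives of those of $C_x^*C_x$; the two phrasings are the same fact in Fourier versus matrix language.
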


\begin{proof}
Let $C_x$ and $C_y$ denote the circulant matrices with first columns $x$ and $y$, respectively.
We wish to show that $\Phi=\left[\begin{array}{cc} C_x & C_y \end{array}\right]$ is an ETF.
First, the columns of $\Phi$ have unit norm by \eqref{eq.2circ unit norm}.
Next, \eqref{eq.2circ tight} implies that every eigenvalue of $\Phi\Phi^*=C_x^{}C_x^*+C_y^{}C_y^*$ equals $\alpha$, meaning $\Phi\Phi^*=\alpha I$, i.e., $\Phi$ is tight.
It remains to verify that $\Phi$ is equiangular.
First, for any $i,j\in\mathbb{Z}$ with $i\not\equiv j \bmod d$, it holds that
\[
|\langle T^ix, T^jx \rangle|^2
=|\langle x, T^{j-i}x \rangle|^2
=|\langle x, T^{i-j}x \rangle|^2,
\]
and either $j-i$ or $i-j$ resides in $\{1,\ldots,\lfloor\tfrac{d}{2}\rfloor\}$ modulo $d$.
As such, the translations of $x$ are $\beta$-equiangular by \eqref{eq.2circ equi 1}.
Similarly, \eqref{eq.2circ equi 2} implies that the translations of $x$ are $\beta$-equiangular with the translations of $y$.
Finally, \eqref{eq.2circ tight} gives that
\[
\alpha I
=\Phi\Phi^*
=C_x^{}C_x^*+C_y^{}C_y^*
=C_x^*C_x^{}+C_y^*C_y^{},
\]
where the last step applies the fact that circulant matrices commute.
Thus, the off-diagonal entries of $C_y^*C_y^{}$ are opposite the off-diagonal entries of $C_x^*C_x^{}$.
Since the translations of $x$ are $\beta$-equiangular by \eqref{eq.2circ equi 1}, it follows that the translations of $y$ are also $\beta$-equiangular.
\end{proof}

The following tables count the real variables and constraints in Lemma~\ref{lem.2-circ constraints}:
\begin{center}
\begin{tabular}{cc}
object & variables \\ \hline
$x$ & $2d$ \\
$y$ & $2d$ \\
$\alpha$ & 1 \\
$\beta$ & 1 
\end{tabular}
\qquad\qquad
\begin{tabular}{cc}
equation & constraints \\ \hline
\eqref{eq.2circ unit norm} & $2$ \\
\eqref{eq.2circ tight} & $d$ \\
\eqref{eq.2circ equi 1} & $\lfloor\frac{d}{2}\rfloor$ \\
\eqref{eq.2circ equi 2} & $d$
\end{tabular}
\end{center}
We expect the dimension of the set of $(x,y)$'s that generate $2$-circulant $d\times 2d$ ETFs to equal the dimension of the set of $(x,y,\alpha,\beta)$'s that satisfy \eqref{eq.2circ unit norm}--\eqref{eq.2circ equi 2}, since $\alpha$ and $\beta$ are determined by $x$ and $y$.
Furthermore, we naively expect the dimension of this set of $(x,y,\alpha,\beta)$'s to equal the total number of variables minus the total number of constraints, namely $\lceil\frac{3d}{2}\rceil$.
In addition, one might mod out by switching equivalence, which is easier to account for in the Fourier basis that diagonalizes the translation operator.
In this basis, one may phase any of the $d$ coordinates to obtain another $2$-circulant ETF.
One may also phase either $x$ or $y$, though phasing both $x$ and $y$ with a common phase is equivalent to phasing all $d$ coordinates with that phase.
After modding out by this $(d+1)$-dimensional torus action, we (again, naively) predict that the set of inequivalent $2$-circulant $d\times 2d$ ETFs has dimension
\begin{equation}
\label{eq.predicted dimension}
\lceil\tfrac{3d}{2}\rceil - (d + 1).
\end{equation}
Perhaps surprisingly, this suggests that the dimension of the solution set grows with $d$.
The following examples evaluate this prediction in small dimensions.

\begin{example}
When $d=2$, the naive calculation \eqref{eq.predicted dimension} predicts a $0$-dimensional set of inequivalent $2$-circulant ETFs.
In his thesis~\cite{Zauner:99}, Zauner presents a $2$-circulant $2\times 4$ ETF with signature matrix
\[
\left[\begin{array}{cc|cc}
0&\phantom{-}1&\phantom{-}1&-\mathrm{i}\\
1&\phantom{-}0&-\mathrm{i}&\phantom{-}1\\\hline
1&\phantom{-}\mathrm{i}&\phantom{-}0&-1\\
\mathrm{i}&\phantom{-}1&-1&\phantom{-}0
\end{array}\right].
\]
(Such a $2$-circulant ETF also arises by applying Theorem~\ref{thm.2-circ ETF families} to $G_3+1$.)
It turns out that there is, indeed, a $0$-dimensional set of inequivalent $2$-circulant $2\times 4$ ETFs, and in fact, the $2\times 4$ ETF is \textit{unique} up to switching equivalence (even without insisting on $2$-circulant structure).
To see this, note that a $2\times 4$ matrix with columns $\varphi_1,\ldots,\varphi_4$ is an ETF precisely when $\{\varphi_1\varphi_1^*,\ldots,\varphi_4\varphi_4^*\}$ is the vertex set of a regular tetrahedon in the so-called \textit{Bloch sphere}, namely, the $2$-sphere in the affine space of unit-trace self-adjoint $2\times 2$ matrices that is centered at $\frac{1}{2}I$ and has Frobenius-radius $\frac{1}{\sqrt{2}}$.
Then any two $2\times 4$ ETFs are switching equivalent since one may rotate any such regular tetrahedron into another by applying an appropriate $2\times 2$ unitary transformation to the ETF vectors.
(Here, we are making use of a well-known isomorphism between $PU(2)$ and $SO(3)$.)
\end{example}

\begin{example}
When $d=3$, the naive calculation \eqref{eq.predicted dimension} predicts a $1$-dimensional set of inequivalent $2$-circulant ETFs.
Et-Taoui presented a $1$-dimensional set of $3\times 6$ ETFs (see Subsection~\ref{subsec.other etfs of paley size}), and we empirically observe that these are switching equivalent to $2$-circulant ETFs.
As an explicit alternative, for any unimodular $\alpha\in\mathbb{C}$, the following is the signature matrix of a $2$-circulant $3\times 6$ ETF:
\[
\left[\begin{array}{ccc|ccc}
\phantom{-}0&\phantom{-}\alpha&\phantom{-}\overline\alpha & \phantom{-}1&\phantom{-}\alpha&-\overline\alpha \\
\phantom{-}\overline\alpha&\phantom{-}0&\phantom{-}\alpha & -\overline\alpha&\phantom{-}1&\phantom{-}\alpha \\
\phantom{-}\alpha&\phantom{-}\overline\alpha&\phantom{-}0 & \phantom{-}\alpha&-\overline\alpha&\phantom{-}1 \\ \hline
\phantom{-}1&-\alpha&\phantom{-}\overline\alpha & \phantom{-}0&-\alpha&-\overline\alpha \\
\phantom{-}\overline\alpha&\phantom{-}1&-\alpha & -\overline\alpha&\phantom{-}0&-\alpha \\
-\alpha&\phantom{-}\overline\alpha&\phantom{-}1 & -\alpha&-\overline\alpha&\phantom{-}0
\end{array}\right].
\]
Furthermore, generic members of this $1$-dimensional set are inequivalent since they exhibit distinct triple products~\cite{ChienW:16}.
This confirms our $1$-dimensional prediction above.
\end{example}

\begin{example}
When $d=4$, the naive calculation \eqref{eq.predicted dimension} predicts a $1$-dimensional set of inequivalent $2$-circulant ETFs.
Applying Theorem~\ref{thm.2-circ ETF families} to $G_7+1$ produces the following signature matrix:
\[
\left[
\begin{array}{llll|llll}
\phantom{-}0& -\omega^3 & -1 & \phantom{-}\omega & \phantom{-}\mathrm{i} & \phantom{-}\omega^3 & -1 & -\omega \\
\phantom{-}\omega & \phantom{-}0 & -\omega^3 & -1 & -\omega & \phantom{-}\mathrm{i} & \phantom{-}\omega^3 & -1 \\
-1 & \phantom{-}\omega & \phantom{-}0 & -\omega^3 & -1 & -\omega & \phantom{-}\mathrm{i} & \phantom{-}\omega^3 \\
-\omega^3 & -1 & \phantom{-}\omega & \phantom{-}0 & \phantom{-}\omega^3 & -1 & -\omega & \phantom{-}\mathrm{i} \\ \hline
-\mathrm{i} & \phantom{-}\omega^3 & -1 & -\omega & \phantom{-}0 & \phantom{-}\omega^3 & \phantom{-}1 & -\omega \\
-\omega & -\mathrm{i} & \phantom{-}\omega^3 & -1 & -\omega & \phantom{-}0 & \phantom{-}\omega^3 & \phantom{-}1 \\
-1 & -\omega & -\mathrm{i} & \phantom{-}\omega^3 & \phantom{-}1 & -\omega & \phantom{-}0 & \phantom{-}\omega^3 \\
\phantom{-}\omega^3 & -1 & -\omega & -\mathrm{i} & \phantom{-}\omega^3 & \phantom{-}1 & -\omega & \phantom{-}0
\end{array}
\right],
\]
where $\omega$ denotes the primitive eighth root of unity $e^{\pi\mathrm{i}/4}$.
Despite our $1$-dimensional prediction, we have not found a $2$-circulant $4\times 8$ ETF that is inequivalent to the one above.
In fact, we believe the $4\times 8$ ETF is unique up to switching equivalence (even without insisting on $2$-circulant structure).
As evidence for this claim, we conducted the following experiment:
From a random initialization, run $10,000$ iterations of the alternating projections routine in~\cite{TroppDHS:05} to obtain the Gram matrix of a numerical ETF, then pass to the signature matrix of phases, normalize the phases in the first row and column to be all $1$s, and store the largest absolute real part of the entries in the remaining $7\times 7$ core.
This quantifies the extent to which the numerical ETF is switching equivalent to a $4\times 8$ ETF obtained as in Subsection~\ref{subsec.classic constructions} from a skew-symmetric $8\times 8$ conference matrix (which is known to be unique up to switching~\cite{OEIS:online}).
After $1,000$ trials of this experiment, the largest deviation we encountered was less than $0.05$, which is quite small considering alternating projections converges slowly in this complex setting.
Furthermore, in all of our trials, rounding to the nearest signature matrix with imaginary core resulted in the signature matrix of an ETF.
\end{example}

\begin{conjecture}
\label{conj.4}
There exists a unique complex $4\times 8$ ETF up to switching equivalence.
\end{conjecture}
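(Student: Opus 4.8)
First I would translate the statement into one about signature matrices and normalize by switching. A complex $4\times 8$ ETF has an $8\times 8$ Hermitian signature matrix $S$ with zero diagonal, unimodular off-diagonal entries, and $S^2=(2\cdot 4-1)I=7I$. Phasing the first row and column to all ones, write
\[
S=\left[\begin{array}{cc} 0 & \mathbf 1^\top\\ \mathbf 1 & M\end{array}\right],
\]
so that $S^2=7I$ is equivalent to $M$ being a $7\times 7$ Hermitian matrix with zero diagonal, unimodular off-diagonal entries, and $M^2=7I-J$. Once the first row and column are normalized, the only remaining diagonal phasings are global and act trivially on $S$, so the residual switching freedom is exactly the $S_7$ action permuting the last seven indices. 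Thus the statement becomes: every such core $M$ yields, after undoing the normalization, a signature matrix switching equivalent to that of the known $4\times 8$ ETF, namely $\mathrm i C$ for the unique real skew-symmetric conference matrix $C$ of order $8$ (\cite{DelsarteGS:71,OEIS:online}).

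I would then attack this finite classification along two complementary lines. The \emph{structural} route uses doubling: since $8=2\cdot 4$ and $4=2\cdot 2$, Proposition~\ref{prop.etf doubling} exhibits the doubling of the (unique) $2\times 4$ ETF as an explicit $4\times 8$ ETF, so it would suffice to prove that \emph{every} order-$8$ Hermitian complex conference matrix is switching equivalent to one in the block form $\left[\begin{smallmatrix} S_0 & S_0+\beta I\\ S_0+\overline\beta I & -S_0\end{smallmatrix}\right]$ arising from that proposition, with $S_0$ a $4\times 4$ Hermitian conference matrix. One would try to locate the required block structure within the switching class by analysing commuting involutory symmetries of $S$, or the incidence geometry of the eight ETF lines in $\mathbb C^4$. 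The \emph{computational} route works with $M^2=7I-J$ directly: writing $M_{ij}=e^{\mathrm i\theta_{ij}}$ with $\theta_{ji}=-\theta_{ij}$, the diagonal of $M^2$ is automatic, while each of the $\binom 72=21$ off-diagonal equations reads $\sum_{k\neq i,j} e^{\mathrm i(\theta_{ik}+\theta_{kj})}=-1$, a rigid identity forcing a sum of five unit complex numbers to equal $-1$. This is a square Laurent-polynomial system on the $21$-dimensional torus; I would enumerate all of its solutions by certified numerical algebraic geometry (polyhedral homotopy together with a trace test for completeness, and interval-Newton certification of the solutions, in the spirit of the Newton--Kantorovich arguments used elsewhere in this paper), then verify that the solution set is a single $S_7$-orbit containing the known example.

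The hard part will be establishing exhaustiveness rather than exhibiting the known solution. In the structural route the obstacle is showing that the doubled block form is always attainable, which risks being as delicate as the original problem. In the computational route the obstacle is certifying that the enumeration is \emph{complete}: ruling out both solutions that escape to infinity under the homotopy and the presence of any positive-dimensional component of the real solution variety, and then carrying out the $S_7$ bookkeeping that collapses the many normalized solutions to one equivalence class. A promising way to keep the computation within reach is to first use the ``sum of five unimodular numbers equal to $-1$'' constraints to cut down the configuration space drastically (as in the real case these rigidify into a strongly-regular-graph-like pattern), and only then invoke the algebraic solver on the surviving branches; success would simultaneously explain why the naive dimension count \eqref{eq.predicted dimension} overshoots in this sporadic case.
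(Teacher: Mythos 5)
The statement you set out to prove is posed in the paper as a \emph{conjecture}, not a theorem: the authors offer only numerical evidence (the alternating-projections experiment described in the example immediately preceding Conjecture~\ref{conj.4}) and explicitly report that their own attempt at a rigorous proof, via a Gr\"obner basis computation in the spirit of~\cite{Szollosi:14}, failed with out-of-memory errors. So there is no proof in the paper to compare against, and your proposal does not supply one either. Both of your routes stop exactly at the step that constitutes the entire difficulty. In the structural route, ``prove that every order-$8$ Hermitian complex conference matrix is switching equivalent to one in the doubled block form'' is essentially a restatement of the uniqueness claim rather than a reduction of it, and you give no mechanism for producing the required block decomposition inside an arbitrary switching class. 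In the computational route, everything hinges on certifying that the homotopy-continuation enumeration of the $21$-variable torus system is \emph{complete} --- ruling out solutions at infinity and positive-dimensional components of the real solution variety --- and you flag this yourself as the hard part without offering an argument for why it is achievable where the authors' elimination-theoretic attempt was not. As written, this is a research plan, not a proof.

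There is also a concrete error in the setup that would need repair before the computational route could certify anything. After phasing the first row and column of $S$ to all ones, the residual switching freedom on the $7\times 7$ core $M$ is \emph{not} just the $S_7$ action permuting the last seven indices. Switching equivalence also permits permutations that move index $1$, after which one re-normalizes at the new base point; these re-basing moves induce additional, nontrivial rephasings of the core (exactly the phenomenon that distinguishes a conference matrix from its Seidel switching class). The correct target of the enumeration is therefore a single orbit under this larger equivalence, and it is entirely possible for the solution set of $M^2=7I-J$ to decompose into several distinct $S_7$-orbits even if the conjecture is true; your reduction as stated would then wrongly report non-uniqueness. The rest of the setup is sound: normalizing does give a Hermitian core with unimodular off-diagonal and $M^2=7I-J$ (which forces $M\mathbf{1}=0$ automatically), and the off-diagonal equations are indeed $21$ conditions each asserting that a sum of five unimodular numbers equals $-1$.
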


We attempted to prove Conjecture~\ref{conj.4} by reducing to a Gr\"{o}bner basis calculation as in~\cite{Szollosi:14}, but our naive implementation resulted in out-of-memory errors.
While the $d=4$ case appears to defy the predicted dimension of $2$-circulant ETFs, we believe that this case is sporadic, as stated in the following conjecture.

\begin{conjecture}[strong $d\times 2d$ conjecture]
\label{conj.strong d by 2d}
For each $d\neq4$, there exists a real smooth submanifold $M\subseteq(\mathbb{C}^d)^2$ of dimension $\lceil\frac{3d}{2}\rceil$ such that every member of $M$ generates a $2$-circulant $d\times 2d$ ETF.
\end{conjecture}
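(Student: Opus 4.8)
The plan is to fix $d$ (in the range $5\le d\le 165$; the cases $d\in\{2,3\}$ are settled by the explicit families above, and $d=4$ is excluded) and deduce the manifold from a quantitative implicit function theorem anchored at a numerically constructed point. First I would recast the hypotheses of Lemma~\ref{lem.2-circ constraints} as the zero locus of one explicit polynomial map. Identify $(\mathbb{C}^d)^2$ with $\mathbb{R}^{4d}$ via real and imaginary parts, and eliminate the auxiliary reals $\alpha,\beta$ from \eqref{eq.2circ unit norm}--\eqref{eq.2circ equi 2} by replacing each ``all equal'' assertion with consecutive differences; since the relevant quantities $|\hat{x}_k|^2+|\hat{y}_k|^2$, $|\langle x,T^kx\rangle|^2$, $|\langle x,T^ky\rangle|^2$ are automatically real, this loses nothing, and the constraints become $F(x,y)=0$ for a polynomial map $F\colon\mathbb{R}^{4d}\to\mathbb{R}^{N}$ with
\[
N=2+(d-1)+\big(\lfloor d/2\rfloor+d-1\big)=2d+\lfloor d/2\rfloor.
\]
The point of this bookkeeping is that $4d-N=2d-\lfloor d/2\rfloor=\lceil\tfrac{3d}{2}\rceil$ is exactly the conjectured dimension. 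Hence, by the submersion theorem, it suffices to produce a single point $p$ with $F(p)=0$ at which $DF(p)$ is surjective: then $M:=F^{-1}(0)$ is, near $p$, a smooth real submanifold of $(\mathbb{C}^d)^2$ of dimension $\lceil\tfrac{3d}{2}\rceil$, and Lemma~\ref{lem.2-circ constraints} certifies that every member of $M$ generates a $2$-circulant $d\times 2d$ ETF.

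To obtain such a $p$, I would start from a high-precision numerical approximate solution $p_0$---available, for instance, from the numerical constructions reported for $d\le 1500$, or from an alternating-projections run---and then certify an exact nearby zero using the Newton--Kantorovich technology of Cohn, Kumar, and Minton~\cite{CohnKM:16}, in the form adapted to underdetermined systems via the Moore--Penrose pseudoinverse. Concretely, with interval arithmetic I would rigorously bound: (i) the residual $\|F(p_0)\|$; (ii) a positive lower bound $\sigma$ on the smallest singular value of $DF(p_0)$, equivalently an upper bound on $\|DF(p_0)^{+}\|$; and (iii) an operator-norm bound $L$ on $D^2F$ over an explicit ball $B(p_0,r)$. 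When the resulting Kantorovich-type inequality holds---schematically, when $\|F(p_0)\|$ is small compared with $\sigma^2/L$---the pseudoinverse Newton iteration $p_{k+1}=p_k-DF(p_k)^{+}F(p_k)$ converges to an exact zero $p\in B(p_0,r)$; moreover the same bounds keep $DF$ of full row rank $N$ throughout $B(p_0,r)$, so $DF(p)$ is surjective, as required. Carrying out this certificate for each $d$ in the stated range finishes the argument.

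The main obstacle is genuinely two-layered. Computationally, the delicate step is making the rigorous singular-value and Hessian bounds tight enough that the Kantorovich inequality closes; this forces one to polish $p_0$ to high precision and to hope that the conditioning of $DF(p_0)$ does not degrade too severely with $d$, which is precisely why this method reaches only a finite range rather than all $d$. Conceptually, a proof valid for \emph{every} $d\neq 4$ would demand a construction (or a deformation argument around a known ETF) producing a family whose dimension grows like $\Theta(d)$, and the infinite families currently available---Theorem~\ref{thm.2-circ ETF families} together with the one-parameter Et-Taoui continuum---fall far short; bridging that gap is the real difficulty. Finally, the exclusion of $d=4$ is not cosmetic: if the $4\times 8$ ETF is unique up to switching (Conjecture~\ref{conj.4}), then $DF$ fails to be surjective at the corresponding point, the dimension count provably breaks, and no such manifold can exist in that sporadic case.
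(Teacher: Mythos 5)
The statement you are addressing is a conjecture that the paper itself only verifies for $d\le 165$ (Theorem~\ref{thm.strong d by 2d for small d}), and your proposal is essentially the paper's argument for that partial result: encode Lemma~\ref{lem.2-circ constraints} as a polynomial system with $\lceil\frac{3d}{2}\rceil$ more real variables than real constraints, certify an exact zero near a numerical one via a Newton--Kantorovich criterion checked in interval arithmetic, and use the resulting full-rank Jacobian to obtain a local manifold of the predicted dimension. The only differences are implementation details: you eliminate $\alpha,\beta$ by consecutive differences and keep the tightness constraints in the Fourier domain, whereas the paper introduces an auxiliary variable $w$ (to remove the rank deficiency caused by the $j=0$ tightness constraint being the sum of the two norm constraints), works in the time domain (since Mathematica's FFT lacks an interval-arithmetic implementation), and replaces the exact Jacobian by a $\delta$-secant approximation (Theorem~\ref{thm.secant jacobian}) to avoid computing derivatives.
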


\begin{theorem}
\label{thm.strong d by 2d for small d}
The strong $d\times 2d$ conjecture holds for all $d\leq165$.
\end{theorem}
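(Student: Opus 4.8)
The plan is to verify the conjecture computationally, one dimension at a time, via a rigorous Newton--Kantorovich argument in the spirit of Cohn, Kumar, and Minton~\cite{CohnKM:16}. First I would encode the hypotheses of Lemma~\ref{lem.2-circ constraints} as a real polynomial map $F\colon\mathbb{R}^{4d}\to\mathbb{R}^m$ in the real and imaginary parts of $(x,y)$: the two unit-norm equations, the $d-1$ equations forcing $|\hat{x}_k|^2+|\hat{y}_k|^2$ to be independent of $k$, and the $\lfloor\frac{d}{2}\rfloor+d-1$ equations forcing the squared inner products $|\langle x,T^kx\rangle|^2$ and $|\langle x,T^ky\rangle|^2$ to share a common value. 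One need not carry $\alpha$ and $\beta$ along, since the quantities in question are automatically nonnegative reals; in particular every zero of $F$ satisfies the hypotheses of Lemma~\ref{lem.2-circ constraints}, hence generates a $2$-circulant $d\times 2d$ ETF. A count as in Section~\ref{sec.continua} gives $4d-m=\lceil\frac{3d}{2}\rceil$, so it suffices to exhibit, for each $d\leq165$ with $d\neq4$, a point at which $F$ vanishes and $DF$ has full rank $m$: the implicit function theorem then realizes $F^{-1}(0)$ locally as a real smooth submanifold $M\subseteq(\mathbb{C}^d)^2$ of dimension $\lceil\frac{3d}{2}\rceil$, which is exactly the assertion of the strong conjecture.

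Second, for each such $d$ I would compute a high-precision approximate zero $p_0$ of $F$ by polishing (with Newton's method) the numerical $2$-circulant ETFs already produced for $d\leq1500$, and then certify $p_0$ rigorously using interval arithmetic. Concretely, I would (i) verify that $DF(p_0)$ has rank $m$, for instance by exhibiting an invertible $m\times m$ submatrix selected via a pivoted factorization of $DF(p_0)$; (ii) use the complementary coordinate slice through $p_0$, which is transverse to $\ker DF(p_0)$, to obtain a square system $G\colon\mathbb{R}^m\to\mathbb{R}^m$; and (iii) check the Newton--Kantorovich hypotheses for $G$ at $p_0$ --- a certified upper bound on $\|DG(p_0)^{-1}\|$, a certified bound on the residual $\|DG(p_0)^{-1}G(p_0)\|$, and a Lipschitz bound on $DG$ over an explicit ball around $p_0$ --- so that the theorem produces an exact zero $p^\ast$ of $G$, and hence of $F$, inside a certified small ball of $p_0$. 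Taking that ball small enough that $DF$ remains rank $m$ throughout it (again discharged by interval arithmetic), $F^{-1}(0)$ is a smooth submanifold of dimension $\lceil\frac{3d}{2}\rceil$ near $p^\ast$, which finishes the case. The whole verification would be organized as a single program looping over $d\in\{1,\dots,165\}\setminus\{4\}$, with the numerical seeds and interval certificates recorded in an ancillary file for independent reproduction.

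The main obstacle is the rigorous part of step (iii) carried out uniformly up to $d=165$. The system has on the order of $4d$ variables and equations, so $DF$ is a dense $\Theta(d)\times\Theta(d)$ matrix whose conditioning tends to degrade as $d$ grows; obtaining certified bounds on $\|DG(p_0)^{-1}\|$ and on the second-order term over a neighborhood large enough to enclose a true solution requires both a very accurate $p_0$ and a comfortable margin of working precision, and controlling the interval blow-up at that scale is delicate. A secondary subtlety is choosing, for each $d$, a coordinate slice genuinely transverse to the tangent space of the solution set at $p_0$, which I would handle automatically through the pivoted factorization of $DF(p_0)$; and one should double-check that the expected full-rank (hence expected-dimension) behavior really does hold at the chosen $p_0$ for every $d$ in range, since the bound $d\leq165$ presumably reflects exactly where the certified computation ceases to be feasible rather than where the phenomenon stops.
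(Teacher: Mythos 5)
Your proposal is correct and follows essentially the same route as the paper: encode the conditions of Lemma~\ref{lem.2-circ constraints} as a polynomial system whose variable/constraint count gives codimension $2d+\lfloor\frac{d}{2}\rfloor$ and hence a solution manifold of dimension $\lceil\frac{3d}{2}\rceil$, seed with a numerical $2$-circulant ETF, and certify an exact nearby zero together with full rank of the Jacobian by an interval-arithmetic Newton--Kantorovich argument. The only differences are implementation details: the paper keeps the tightness constant as an auxiliary variable, states the tightness constraint in the time domain (precisely to avoid interval-arithmetic FFTs), and applies a rectangular, derivative-free variant (Theorem~\ref{thm.secant jacobian}, built on Corollary~3.4 of~\cite{CohnKM:16}) in which the pseudoinverse of a secant approximation to the Jacobian serves as the approximate right inverse, rather than slicing to a square system and differentiating exactly as you suggest.
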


To prove Theorem~\ref{thm.strong d by 2d for small d}, we first applied local methods to find, for each $d\leq 165$, a $2$-circulant matrix of size $d\times 2d$ that is very nearly an ETF.
Finding such numerical $2$-circulant ETFs was remarkably easy in practice, and in fact, we observed that local methods converge from any random initialization.
We then used the following variant of the Newton--Kantorovich technique laid out by Cohn, Kumar, and Minton in~\cite{CohnKM:16} to establish the existence of a large manifold of exact $2$-circulant ETFs that is close to each of our numerical ETFs.
Here, the \textit{total degree} of a polynomial map $f\colon\mathbb{R}^m\to\mathbb{R}^n$ is the largest degree of its coordinate functions $f_1,\ldots,f_n\colon\mathbb{R}^m\to\mathbb{R}$, and $|f|:=\max_i|f_i|$, where $|f_i|$ is the sum of the absolute values of the coefficients of $f_i$.

\begin{theorem}
\label{thm.secant jacobian}
Fix a polynomial $f\colon\mathbb{R}^m\to\mathbb{R}^n$ of total degree $d$ and a point $x_0\in\mathbb{R}^m$.
Given $\delta>0$, consider the $\delta$-secant approximation $S\colon\mathbb{R}^m\to\mathbb{R}^n$ of the Jacobian of $f$ at $x_0$, defined by
\[
e_j
\mapsto
\frac{f(x_0+\delta e_j)-f(x_0)}{\delta}
\]
and extending linearly.
Denote $\tilde\alpha:=\alpha\max\{1,\|x_0\|_\infty+\alpha\}^{d-2}$ for any $\alpha>0$, and suppose there exists $\varepsilon>0$ and a linear map $T\colon \mathbb{R}^n\to\mathbb{R}^m$ such that 
\[
\|S\circ T-\operatorname{id}_{\mathbb{R}^n}
\|_{\infty\to\infty}
+(\tfrac{1}{2}\tilde\delta+\tilde\varepsilon)|f|d(d-1)\|T\|_{\infty\to\infty}
<1-\tfrac{1}{\varepsilon}\|T\|_{\infty\to\infty}\|f(x_0)\|_\infty.
\]
Then there exists $x^\star\in \mathbb{R}^m$ such that $f(x^\star)=0$ and $\|x^\star-x_0\|_\infty<\varepsilon$.
Furthermore, $f^{-1}(0)$ is locally a manifold of dimension $m-n$ near $x^\star$.
\end{theorem}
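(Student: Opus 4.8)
\textbf{Overall strategy.}The plan is to run a quantitative Banach fixed point argument on the $n$-dimensional affine subspace $x_0+\operatorname{im}(T)$, in the spirit of Cohn, Kumar, and Minton. Parametrize this subspace by $v\mapsto x_0+Tv$ and set $F\colon\mathbb{R}^n\to\mathbb{R}^n$, $F(v):=f(x_0+Tv)$, so that a zero of $F$ near $v=0$ produces a zero of $f$ near $x_0$. Note first that $T$ is injective: if $Tv=0$ then $STv=0$, and $\|S\circ T-\operatorname{id}_{\mathbb{R}^n}\|_{\infty\to\infty}<1$ forces $v=0$ (so distinct $v$'s really do give distinct solutions). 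Since $DF(0)=Df(x_0)\circ T$ ought to be close to $S\circ T\approx\operatorname{id}_{\mathbb{R}^n}$, I would study the Picard map $\psi(v):=v-F(v)$ on the closed ball $\bar B:=\{v:\|v\|_\infty\le\varepsilon/\|T\|_{\infty\to\infty}\}$.

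The estimates that drive everything are two standard polynomial bounds. For a polynomial of total degree $d$ with $\ell^1$-coefficient bound $|f|$, the $\infty\to\infty$ operator norm of its Hessian at a point $w$ is at most $|f|\,d(d-1)\max\{1,\|w\|_\infty\}^{d-2}$, since summing $\sum_j a_i(a_j-\delta_{ij})$ over a monomial exponent vector $a$ with $|a|\le d$ contributes at most $d(d-1)$. Applying one-variable Taylor along the $j$th coordinate yields the secant error bound $\|S-Df(x_0)\|_{\infty\to\infty}\le\tfrac12\tilde\delta\,|f|\,d(d-1)$, and integrating the Hessian along a segment of length at most $\varepsilon$ yields $\|Df(x_0)-Df(x')\|_{\infty\to\infty}\le\tilde\varepsilon\,|f|\,d(d-1)$ whenever $\|x'-x_0\|_\infty\le\varepsilon$. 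These are the only places the hypotheses on $d$, $|f|$, $\tilde\delta$, $\tilde\varepsilon$ enter.

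Next I would estimate the contraction constant of $\psi$. For $v\in\bar B$ we have $\|Tv\|_\infty\le\varepsilon$, so $x_0+Tv$ lies in the $\varepsilon$-ball, and therefore $\|\operatorname{id}_n-DF(v)\|_{\infty\to\infty}=\|\operatorname{id}_n-Df(x_0+Tv)T\|_{\infty\to\infty}\le\|ST-\operatorname{id}_n\|_{\infty\to\infty}+\|T\|_{\infty\to\infty}\big(\|S-Df(x_0)\|_{\infty\to\infty}+\|Df(x_0)-Df(x_0+Tv)\|_{\infty\to\infty}\big)\le\kappa$, where $\kappa:=\|S\circ T-\operatorname{id}_{\mathbb{R}^n}\|_{\infty\to\infty}+(\tfrac12\tilde\delta+\tilde\varepsilon)|f|d(d-1)\|T\|_{\infty\to\infty}$ is exactly the left-hand side of the hypothesis. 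Since $\bar B$ is convex, the mean value inequality makes $\psi$ $\kappa$-Lipschitz on $\bar B$. Moreover $\|\psi(0)\|_\infty=\|F(0)\|_\infty=\|f(x_0)\|_\infty$, and the hypothesis rearranges to $\|f(x_0)\|_\infty<(1-\kappa)\,\varepsilon/\|T\|_{\infty\to\infty}$, so $\psi$ maps $\bar B$ into itself. The contraction mapping theorem then gives a fixed point $v^\star$ with $\|v^\star\|_\infty\le\|\psi(0)\|_\infty/(1-\kappa)<\varepsilon/\|T\|_{\infty\to\infty}$, whence $x^\star:=x_0+Tv^\star$ satisfies $f(x^\star)=F(v^\star)=0$ and $\|x^\star-x_0\|_\infty\le\|T\|_{\infty\to\infty}\|v^\star\|_\infty<\varepsilon$.

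For the manifold claim, observe that $x^\star$ lies in the $\varepsilon$-ball, so the estimate above gives $\|\operatorname{id}_n-Df(x^\star)T\|_{\infty\to\infty}\le\kappa<1$; hence $Df(x^\star)\circ T$ is invertible by a Neumann series, and so $Df(x^\star)\colon\mathbb{R}^m\to\mathbb{R}^n$ is surjective. Thus $0$ is a regular value of $f$ in a neighborhood of $x^\star$, and the submersion (regular value) theorem shows $f^{-1}(0)$ is, near $x^\star$, a smooth submanifold of $\mathbb{R}^m$ of dimension $m-n$. The main obstacle is the first bookkeeping step: pinning down the explicit polynomial Hessian and Taylor-remainder bounds with the stated constants $\tfrac12\tilde\delta+\tilde\varepsilon$, $|f|$, and $d(d-1)$; once those are in hand the rest is a routine fixed-point-plus-implicit-function-theorem argument.
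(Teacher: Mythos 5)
Your proposal is correct, but it takes a more self-contained route than the paper. The paper's proof is a reduction: it invokes Corollary~3.4 of Cohn--Kumar--Minton as a black box (that corollary already packages the contraction/Newton--Kantorovich argument, the Jacobian Lipschitz estimate with the constant $\tilde\varepsilon|f|d(d-1)$, and the manifold conclusion), so the only thing the paper actually proves is the new ingredient, namely the secant-versus-exact-Jacobian error bound $\|S-Df(x_0)\|_{\infty\to\infty}\leq\tfrac{1}{2}\tilde\delta|f|d(d-1)$, obtained exactly as you describe via coordinate-wise Taylor expansion, the monomial estimate $|z^{\alpha-2e_j}|\leq\max\{1,\|x_0\|_\infty+\delta\}^{d-2}$, and the combinatorial bound $\sum_j\alpha_j(\alpha_j-1)\leq\bigl(\sum_j\alpha_j\bigr)\bigl(\sum_j\alpha_j-1\bigr)\leq d(d-1)$. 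You instead re-derive the entire statement from scratch: the Picard map $\psi(v)=v-f(x_0+Tv)$ on the ball of radius $\varepsilon/\|T\|_{\infty\to\infty}$, the identity $\operatorname{id}-Df(x')T=(\operatorname{id}-ST)+(S-Df(x_0))T+(Df(x_0)-Df(x'))T$ showing the contraction constant is exactly the left-hand side of the hypothesis, the self-mapping condition being exactly the rearranged inequality, and the submersion theorem for the manifold claim; all of these steps check out, including the full-Hessian row-sum bound $\sum_{j,k}\alpha_j(\alpha_k-\delta_{jk})=\bigl(\sum_j\alpha_j\bigr)\bigl(\sum_j\alpha_j-1\bigr)\leq d(d-1)$ needed for the Lipschitz estimate on $Df$ (your displayed sum has an index typo but the bound is right). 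What your approach buys is independence from the external reference and an explicit account of where each constant enters; what the paper's approach buys is brevity, since only the secant estimate is genuinely new relative to \cite{CohnKM:16}.
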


Theorem~\ref{thm.secant jacobian} is a version of Newton--Kantorovich that avoids computing an exact Jacobian. 
Performance may suffer, but this derivative-free alternative is much easier to implement in practice (at least when automatic differentiation is unavailable). 
Theorem~\ref{thm.strong d by 2d for small d} might be improved by computing exact Jacobians, but the primary bottleneck seems to be that the fast Fourier transform has no interval arithmetic implementation in Mathematica.

\begin{proof}[Proof of Theorem~\ref{thm.secant jacobian}]
By Corollary~3.4 in~\cite{CohnKM:16}, it suffices to show that
\[
\|Df(x_0)\circ T-\operatorname{id}_{\mathbb{R}^n}
\|_{\infty\to\infty}
+\tilde\varepsilon|f|d(d-1)\|T\|_{\infty\to\infty}
<1-\tfrac{1}{\varepsilon}\|T\|_{\infty\to\infty}\|f(x_0)\|_\infty,
\]
where $Df(x_0)\colon\mathbb{R}^m\to\mathbb{R}^n$ denotes the Jacobian of $f$ at $x_0$.
Considering
\begin{align*}
\|Df(x_0)\circ T-\operatorname{id}_{\mathbb{R}^n}
\|_{\infty\to\infty}
&=\|(Df(x_0)-S+S)\circ T-\operatorname{id}_{\mathbb{R}^n}
\|_{\infty\to\infty}\\
&\leq\|S-Df(x_0)\|_{\infty\to\infty}\|T\|_{\infty\to\infty}+\|S\circ T-\operatorname{id}_{\mathbb{R}^n}
\|_{\infty\to\infty},
\end{align*}
it suffices to prove the estimate
\[
\|S-Df(x_0)\|_{\infty\to\infty}
\leq \tfrac{1}{2}\tilde\delta|f|d(d-1).
\]
To this end, we apply the following version of Taylor's theorem:
Given a compact interval $I$ with midpoint $a$ and a smooth function $p\colon I\to\mathbb{R}$, then for every $x\in I$, it holds that
\[
\big| p(x) - \big( p(a) + p'(a)(x-a)\big) \big|
\leq \tfrac{1}{2}|x-a|^2\cdot \sup_{t\in I}|p''(t)|.
\]
Letting $S_{ij}$ denote the $(i,j)$-entry of the matrix representation of $S$, then taking $p\colon[-\delta,\delta]\to\mathbb{R}$ defined by $p(t)=f_i(x_0+te_j)$  and applying Taylor's theorem gives
\[
\Big|S_{ij}-\tfrac{\partial f_i(x)}{\partial x_j}\big|_{x=x_0}\Big|
=\tfrac{1}{\delta}\big|
p(\delta) -\big(
p(0) + p'(0) \delta 
\big)
\big|
\leq\tfrac{\delta}{2}\sup_{t\in[-\delta,\delta]}|p''(t)|
\leq \tfrac{\delta}{2}\sup_{\|z\|_\infty\leq\|x_0\|_\infty+\delta}\Big|\tfrac{\partial^2 f_i(x)}{\partial x_j^2}\big|_{x=z}\Big|.
\]
Next, writing $f_i(x)=\sum_{\alpha\in E_i}c_\alpha^{(i)}x^\alpha$ for some set $E_i$ of multi-indices $\alpha$ with $\sum_j\alpha_j\leq d$, then the second partial derivative is $\tfrac{\partial^2 f_i(x)}{\partial x_j^2}=\sum_{\alpha\in E_i}c_\alpha^{(i)}\alpha_j(\alpha_j-1)x^{\alpha-2e_j}$, and so
\[
\Big|\tfrac{\partial^2 f_i(x)}{\partial x_j^2}\big|_{x=z}\Big|
\leq \sum_{\alpha\in E_i}|c_\alpha^{(i)}|\cdot\alpha_j(\alpha_j-1)\cdot|z^{\alpha-2e_j}|.
\]
Putting everything together, we obtain
\begin{align*}
\|S-Df(x_0)\|_{\infty\to\infty}
&=\max_{i\in[n]}\sum_{j\in[m]}\Big|S_{ij}-\tfrac{\partial f_i(x)}{\partial x_j}\big|_{x=x_0}\Big|\\
&\leq\max_{i\in[n]}\sum_{j\in[m]}\tfrac{\delta}{2}\sup_{\|z\|_\infty\leq\|x_0\|_\infty+\delta}\Big|\tfrac{\partial^2 f_i(x)}{\partial x_j^2}\big|_{x=z}\Big|\\
&\leq\max_{i\in[n]}\sum_{j\in[m]}\tfrac{\delta}{2}\sup_{\|z\|_\infty\leq\|x_0\|_\infty+\delta}\sum_{\alpha\in E_i}|c_\alpha^{(i)}|\cdot\alpha_j(\alpha_j-1)\cdot|z^{\alpha-2e_j}|\\
&\leq\tfrac{\delta}{2}\max_{i\in[n]}\sum_{\alpha\in E_i}|c_\alpha^{(i)}|\sum_{j\in[m]}\alpha_j(\alpha_j-1)\sup_{\|z\|_\infty\leq\|x_0\|_\infty+\delta}|z^{\alpha-2e_j}|.
\end{align*}
To estimate the supremum, observe that $\|z\|_\infty\leq\|x_0\|_\infty+\delta$ implies
\[
|z^{\alpha-2e_j}|
=\prod_{k\in[m]}|z_k|^{\alpha_k-2\delta_{jk}}
\leq \prod_{k\in[m]}\max\{1,\|x_0\|_\infty+\delta\}^{\alpha_k-2\delta_{jk}}
\leq \max\{1,\|x_0\|_\infty+\delta\}^{d-2}.
\]
Next, each $\alpha\in E_i$ satisfies $\alpha_j\geq0$ for every $j$ and $\sum_j\alpha_j\leq d$, and so
\[
\sum_j\alpha_j(\alpha_j-1)
=\sum_j\alpha_j^2-\sum_j\alpha_j
\leq \sum_j\alpha_j^2+\sum_i\sum_{j\neq i}\alpha_i\alpha_j-\sum_j\alpha_j
=\bigg(\sum_j\alpha_j\bigg)\bigg(\sum_j\alpha_j-1\bigg)
\leq d(d-1).
\]
Finally, we recall that $|f_i|:=\sum_{\alpha\in E_i}|c_\alpha^{(i)}|$ and $|f|:=\max_{i\in[n]}|f_i|$.
The result follows.
\end{proof}

\begin{proof}[Proof sketch of Theorem~\ref{thm.strong d by 2d for small d}]
First, we describe a polynomial map $f\colon\mathbb{R}^{4d+1}\to\mathbb{R}^{2d+\lfloor\frac{d}{2}\rfloor+1}$ such that 
\[
f(\operatorname{Re}x,\operatorname{Im}x,\operatorname{Re}y,\operatorname{Im}y,w)
=0
\]
precisely when $x,y\in\mathbb{C}^d$ generate a $2$-circulant $d\times 2d$ ETF and $w=\frac{1}{2}$.
To do this, we follow Lemma~\ref{lem.2-circ constraints}.
The first two coordinate functions of $f$ force $\|x\|^2-1=0$ and $\|y\|^2-1=0$ so as to implement \eqref{eq.2circ unit norm}.
Next, we implement \eqref{eq.2circ tight} in the time domain by forcing
\[
\langle x,T^j x\rangle + \langle y,T^j y\rangle
-4w\delta_{j,0}
=0
\qquad
\forall j\in\{0,\ldots,d-1\}.
\]
When $j=0$, this is a real constraint.
When $j=\frac{d}{2}$ with $d$ even, this is also a real constraint.
For $0<j<\frac{d}{2}$, we take the real and imaginary parts of this constraint, which in turn imply that the constraint holds with $d-j$ in place of $j$.
Overall, \eqref{eq.2circ tight} accounts for $d$ coordinate functions of $f$.
We implement \eqref{eq.2circ equi 1} and \eqref{eq.2circ equi 2} with the remaining coordinate functions:
\begin{align*}
|\langle x,T^jx\rangle|^2-|\langle x,y\rangle|^2=0&
\qquad \forall j\in\{1,\ldots,\lfloor\tfrac{d}{2}\rfloor\},\\
|\langle x,T^jy\rangle|^2-|\langle x,y\rangle|^2=0&
\qquad \forall j\in\{1,\ldots,d-1\}.
\end{align*}
The resulting polynomial map $f$ has total degree $4$ and $|f|\leq 16d^2$.
Take any $z_0\in\mathbb{R}^{4d+1}$ whose first $4d$ coordinates determine a numerical $2$-circulant ETF, and whose last coordinate is $\frac{1}{2}$, thereby implying $f(z_0)\approx 0$.
Put $\delta:=10^{-10}$, compute the $\delta$-secant approximation $S$ of the Jacobian of $f$ at $z_0$, and then numerically compute the pseudoinverse $T$ of $S$.
It remains to test whether there exists $\varepsilon>0$ that satisfies the inequality in Theorem~\ref{thm.secant jacobian}.
Since the $w$-coordinate of $z_0$ is $\frac{1}{2}$, it turns out that $\|z_0\|_\infty<1$ in practice, and so we may insist on $\tilde\varepsilon=\varepsilon$, in which case the inequality reduces to a quadratic in $\varepsilon$.
As such, whether there exists an appropriate $\varepsilon>0$ amounts to testing a few inequalities in the coefficients of this quadratic, which we implement using interval arithmetic in Mathematica.
We provide our code as an ancillary file with the arXiv version of this paper.
\end{proof}

\section{Discussion}
\label{sec.discussion}

In this paper, we reviewed what is known about $d\times2d$ ETFs, we introduced a new infinite family of $d\times2d$ ETFs, we observed their emergent $2$-circulant structure, we used this structure to formulate strengthened versions of the $d\times 2d$ conjecture from~\cite{FallonI:23}, and we made partial progress on both strengthenings.
In this section, we reflect on our results and discuss opportunities for future work.

The current state of play for $d\leq150$ is summarized in Table~\ref{table.first 150}.
Circles denote problems that were open before this paper, and checkmarks denote problems that are now solved.
The column labeled ``$\exists$'' indicates whether a $d\times 2d$ ETF is known to exist.
Every entry in this column has a checkmark as a consequence of Theorem~\ref{thm.strong d by 2d for small d}.
The column labeled ``x'' indicates whether an explicit $d\times 2d$ ETF is known.
At the moment, the only $d\leq 150$ for which no explicit $d\times 2d$ ETF is known are $d\in\{77,93,105,133\}$.
(Curiously, dimensions $77$, $93$, and $133$ all satisfy the integrality conditions that are necessary for a \textit{real} ETF to exist~\cite{FickusM:15}.)
The column labeled ``$\exists2$'' indicates whether a $2$-circulant $d\times 2d$ ETF is known to exist, and the column labeled ``x$2$'' indicates whether an \textit{explicit} $2$-circulant $d\times 2d$ ETF is known.
Every entry in the $\exists2$ column has a checkmark as a consequence of Theorem~\ref{thm.strong d by 2d for small d}.
Almost every checkmark in the x$2$ column follows from Theorem~\ref{thm.2-circ ETF families}, and the smallest open case here is $d=33$.
Explicit constructions are listed in the ``notes'' column, with bold entries belonging to the new infinite family in Theorem~\ref{thm.doubling signature}.
We do not describe all known constructions in the notes column.
In particular, if an explicit $2$-circulant ETF is known, then we only list such constructions.
While Theorem~\ref{thm.2-circ ETF families} is our primary source of explicit $2$-circulant ETFs, other constructions also appear in Table~\ref{table.first 150}. 
For example, when $p$ and $p+2$ are both prime, there exists a difference set $\operatorname{TPP}(p)$ of size $d:=\frac{p(p+2)-1}{2}$ in the cyclic group $C_p\times C_{p+2}$, which in turn induces a harmonic ETF of size $d\times (2d+1)$; see~\cite{JungnickelPS:07,FickusM:15}.
Applying Proposition~\ref{prop.etf doubling} then doubles this ETF to produce a $p(p+2)\times 2p(p+2)$ ETF that we label by $2\cdot\operatorname{TPP}(p)$.
In general, the double of any harmonic ETF over a cyclic group is $2$-circulant by Theorem~\ref{thm.t-gen Gamma-harm detector}.
For this reason, $2\cdot\operatorname{TPP}(p)$ appears in Table~\ref{thm.doubling signature} for every $p\in\{3,5,11\}$.
Similarly, \cite{Gordon:online} reports a difference set of size $31$ in $C_{63}$ called $\operatorname{Singer}(5,2)$, which we double to produce a $2$-circulant $63\times 126$ ETF.
When an explicit $2$-circulant construction is not known, we list constructions of the form $F_{v}+1$ or $2\cdot F_v$.
The conference graph $F_{65}$ was recently discovered in~\cite{Gritsenko:21}.
All other non-Paley $F_v$'s that appear in Table~\ref{table.first 150} satisfy $v=4k-1$ for some $k$, in which case $F_v$ is known to exist whenever $k<89$; see~\cite{Djokovic:23} and references therein.

For future work, we describe a few possible approaches to the $d\times 2d$ conjecture.
Is there any hope of finding an explicit solution to the weak form of the conjecture?
Perhaps one may impose additional structure beyond $2$-circulant to obtain a discrete solution set that is more amenable to tools from algebraic combinatorics.
Note that the skew Hadamard conjecture implies ``three-fourths'' of the $d\times 2d$ conjecture in the sense that it produces $d\times 2d$ ETFs for every $d\not\equiv 1\bmod 4$.
(Explicitly, take $F_{2d-1}+1$ when $d$ is even and $2\cdot F_{d}$ when $d\equiv 3\bmod 4$.)
Is there a conditional proof of the remaining case where $d\equiv 1\bmod 4$?
In lieu of an explicit solution, one might seek an implicit proof of the $d\times 2d$ conjecture.
For example, are the defining polynomials amenable to tools from (real) algebraic geometry?
In practice, we find that numerical $d\times 2d$ ETFs are easy to obtain by local methods.
In particular, generic initializations tend to avoid spurious local minimizers (if they even exist).
Can one prove the $d\times 2d$ conjecture by performing a landscape analysis akin to~\cite{BenedettoF:03,MixonNSV:24}?

\begin{table}[h]
\caption{Parameters of known $d\times 2d$ ETFs (see Section~\ref{sec.discussion} for details) \label{table.first 150}}
\begin{center}
\scriptsize{
\begin{tabular}{rlllll}
$d$ & $\exists$ & x & \!$\exists2$ & \!x$2$ & notes \\ \hline
$1$ & \checkmark & \checkmark & \checkmark & \checkmark & trivial \\
$2$ & \checkmark & \checkmark & \checkmark & \checkmark & $G_3+1$ \\
$3$ & \checkmark & \checkmark & \checkmark & \checkmark & $G_5+1$, $2\cdot G_3$ \\
$4$ & \checkmark & \checkmark & \checkmark\hspace{-1em}$\bigcirc$ & \checkmark\hspace{-1em}$\bigcirc$ & $G_7+1$, $2\cdot(G_3+1)$ \\
$5$ & \checkmark & \checkmark & \checkmark\hspace{-1em}$\bigcirc$ & \checkmark\hspace{-1em}$\bigcirc$ & $G_9+1$, $\boldsymbol{2\cdot G_5}$ \\
$6$ & \checkmark & \checkmark & \checkmark\hspace{-1em}$\bigcirc$ & \checkmark\hspace{-1em}$\bigcirc$ & $G_{11}+1$, $2\cdot (G_5+1)$ \\
$7$ & \checkmark & \checkmark & \checkmark\hspace{-1em}$\bigcirc$ & \checkmark\hspace{-1em}$\bigcirc$ & $G_{13}+1$, $2\cdot G_7$ \\
$8$ & \checkmark & \checkmark & \checkmark\hspace{-1em}$\bigcirc$ & \checkmark\hspace{-1em}$\bigcirc$ & $2\cdot(G_7+1)$ \\
$9$ & \checkmark & \checkmark & \checkmark\hspace{-1em}$\bigcirc$ & \checkmark\hspace{-1em}$\bigcirc$ & $G_{17}+1$ \\
$10$ & \checkmark & \checkmark & \checkmark\hspace{-1em}$\bigcirc$ & \checkmark\hspace{-1em}$\bigcirc$ & $G_{19}+1$, $2\cdot (G_{9}+1)$ \\
$11$ & \checkmark & \checkmark & \checkmark\hspace{-1em}$\bigcirc$ & \checkmark\hspace{-1em}$\bigcirc$ & $2\cdot G_{11}$ \\
$12$ & \checkmark & \checkmark & \checkmark\hspace{-1em}$\bigcirc$ & \checkmark\hspace{-1em}$\bigcirc$ & $G_{23}+1$, $2\cdot (G_{11}+1)$ \\
$13$ & \checkmark & \checkmark & \checkmark\hspace{-1em}$\bigcirc$ & \checkmark\hspace{-1em}$\bigcirc$ & $G_{25}+1$, $\boldsymbol{2\cdot G_{13}}$ \\
$14$ & \checkmark & \checkmark & \checkmark\hspace{-1em}$\bigcirc$ & \checkmark\hspace{-1em}$\bigcirc$ & $2\cdot (G_{13}+1)$ \\
$15$ & \checkmark & \checkmark & \checkmark\hspace{-1em}$\bigcirc$ & \checkmark\hspace{-1em}$\bigcirc$ & $G_{29}+1$, $2\cdot\operatorname{TPP}(3)$ \\
$16$ & \checkmark & \checkmark & \checkmark\hspace{-1em}$\bigcirc$ & \checkmark\hspace{-1em}$\bigcirc$ & $G_{31}+1$ \\
$17$ & \checkmark\hspace{-1em}$\bigcirc$ & \checkmark\hspace{-1em}$\bigcirc$ & \checkmark\hspace{-1em}$\bigcirc$ & \checkmark\hspace{-1em}$\bigcirc$ & $\boldsymbol{2\cdot G_{17}}$ \\
$18$ & \checkmark & \checkmark & \checkmark\hspace{-1em}$\bigcirc$ & \checkmark\hspace{-1em}$\bigcirc$ & $2\cdot (G_{17}+1)$ \\
$19$ & \checkmark & \checkmark & \checkmark\hspace{-1em}$\bigcirc$ & \checkmark\hspace{-1em}$\bigcirc$ & $G_{37}+1$, $2\cdot G_{19}$ \\
$20$ & \checkmark & \checkmark & \checkmark\hspace{-1em}$\bigcirc$ & \checkmark\hspace{-1em}$\bigcirc$ & $2\cdot (G_{19}+1)$ \\
$21$ & \checkmark & \checkmark & \checkmark\hspace{-1em}$\bigcirc$ & \checkmark\hspace{-1em}$\bigcirc$ & $G_{41}+1$ \\
$22$ & \checkmark & \checkmark & \checkmark\hspace{-1em}$\bigcirc$ & \checkmark\hspace{-1em}$\bigcirc$ & $G_{43}+1$ \\
$23$ & \checkmark & \checkmark & \checkmark\hspace{-1em}$\bigcirc$ & \checkmark\hspace{-1em}$\bigcirc$ & $2\cdot G_{23}$ \\
$24$ & \checkmark & \checkmark & \checkmark\hspace{-1em}$\bigcirc$ & \checkmark\hspace{-1em}$\bigcirc$ & $G_{47}+1$, $2\cdot (G_{23}+1)$ \\
$25$ & \checkmark & \checkmark & \checkmark\hspace{-1em}$\bigcirc$ & \checkmark\hspace{-1em}$\bigcirc$ & $G_{49}+1$ \\
$26$ & \checkmark & \checkmark & \checkmark\hspace{-1em}$\bigcirc$ & \checkmark\hspace{-1em}$\bigcirc$ & $2\cdot (G_{25}+1)$ \\
$27$ & \checkmark & \checkmark & \checkmark\hspace{-1em}$\bigcirc$ & \checkmark\hspace{-1em}$\bigcirc$ & $G_{53}+1$, $2\cdot G_{27}$ \\
$28$ & \checkmark & \checkmark & \checkmark\hspace{-1em}$\bigcirc$ & \checkmark\hspace{-1em}$\bigcirc$ & $2\cdot (G_{27}+1)$ \\
$29$ & \checkmark\hspace{-1em}$\bigcirc$ & \checkmark\hspace{-1em}$\bigcirc$ & \checkmark\hspace{-1em}$\bigcirc$ & \checkmark\hspace{-1em}$\bigcirc$ & $\boldsymbol{2\cdot G_{29}}$ \\
$30$ & \checkmark & \checkmark & \checkmark\hspace{-1em}$\bigcirc$ & \checkmark\hspace{-1em}$\bigcirc$ & $G_{59}+1$, $2\cdot (G_{29}+1)$ \\
$31$ & \checkmark & \checkmark & \checkmark\hspace{-1em}$\bigcirc$ & \checkmark\hspace{-1em}$\bigcirc$ & $G_{61}+1$, $2\cdot G_{31}$ \\
$32$ & \checkmark & \checkmark & \checkmark\hspace{-1em}$\bigcirc$ & \checkmark\hspace{-1em}$\bigcirc$ & $2\cdot (G_{31}+1)$ \\
$33$ & \checkmark & \checkmark & \checkmark\hspace{-1em}$\bigcirc$ & \phantom{\checkmark}\hspace{-1em}$\bigcirc$ & $F_{65}+1$ \\
$34$ & \checkmark & \checkmark & \checkmark\hspace{-1em}$\bigcirc$ & \checkmark\hspace{-1em}$\bigcirc$ & $G_{67}+1$ \\
$35$ & \checkmark & \checkmark & \checkmark\hspace{-1em}$\bigcirc$ & \checkmark\hspace{-1em}$\bigcirc$ & $2\cdot\operatorname{TPP}(5)$ \\
$36$ & \checkmark & \checkmark & \checkmark\hspace{-1em}$\bigcirc$ & \checkmark\hspace{-1em}$\bigcirc$ & $G_{71}+1$ \\
$37$ & \checkmark & \checkmark & \checkmark\hspace{-1em}$\bigcirc$ & \checkmark\hspace{-1em}$\bigcirc$ & $G_{73}+1$, $\boldsymbol{2\cdot G_{37}}$ \\
$38$ & \checkmark & \checkmark & \checkmark\hspace{-1em}$\bigcirc$ & \checkmark\hspace{-1em}$\bigcirc$ & $2\cdot (G_{37}+1)$ \\
$39$ & \checkmark & \checkmark & \checkmark\hspace{-1em}$\bigcirc$ & \phantom{\checkmark}\hspace{-1em}$\bigcirc$ & $2\cdot F_{39}$ \\
$40$ & \checkmark & \checkmark & \checkmark\hspace{-1em}$\bigcirc$ & \checkmark\hspace{-1em}$\bigcirc$ & $G_{79}+1$ \\
$41$ & \checkmark & \checkmark & \checkmark\hspace{-1em}$\bigcirc$ & \checkmark\hspace{-1em}$\bigcirc$ & $\boldsymbol{2\cdot G_{41}}$ \\
$42$ & \checkmark & \checkmark & \checkmark\hspace{-1em}$\bigcirc$ & \checkmark\hspace{-1em}$\bigcirc$ & $G_{83}+1$, $2\cdot (G_{41}+1)$ \\
$43$ & \checkmark & \checkmark & \checkmark\hspace{-1em}$\bigcirc$ & \checkmark\hspace{-1em}$\bigcirc$ & $2\cdot G_{43}$ \\
$44$ & \checkmark & \checkmark & \checkmark\hspace{-1em}$\bigcirc$ & \checkmark\hspace{-1em}$\bigcirc$ & $2\cdot (G_{43}+1)$ \\
$45$ & \checkmark & \checkmark & \checkmark\hspace{-1em}$\bigcirc$ & \checkmark\hspace{-1em}$\bigcirc$ & $G_{89}+1$ \\
$46$ & \checkmark & \checkmark & \checkmark\hspace{-1em}$\bigcirc$ & \phantom{\checkmark}\hspace{-1em}$\bigcirc$ & $F_{91}+1$ \\
$47$ & \checkmark & \checkmark & \checkmark\hspace{-1em}$\bigcirc$ & \checkmark\hspace{-1em}$\bigcirc$ & $2\cdot G_{47}$ \\
$48$ & \checkmark & \checkmark & \checkmark\hspace{-1em}$\bigcirc$ & \checkmark\hspace{-1em}$\bigcirc$ & $2\cdot (G_{47}+1)$ \\
$49$ & \checkmark & \checkmark & \checkmark\hspace{-1em}$\bigcirc$ & \checkmark\hspace{-1em}$\bigcirc$ & $G_{97}+1$ \\
$50$ & \checkmark & \checkmark & \checkmark\hspace{-1em}$\bigcirc$ & \checkmark\hspace{-1em}$\bigcirc$ & $2\cdot (G_{49}+1)$ \\
$51$ & \checkmark & \checkmark & \checkmark\hspace{-1em}$\bigcirc$ & \checkmark\hspace{-1em}$\bigcirc$ & $G_{101}+1$ \\
$52$ & \checkmark & \checkmark & \checkmark\hspace{-1em}$\bigcirc$ & \checkmark\hspace{-1em}$\bigcirc$ & $G_{103}+1$ \\
$53$ & \checkmark\hspace{-1em}$\bigcirc$ & \checkmark\hspace{-1em}$\bigcirc$ & \checkmark\hspace{-1em}$\bigcirc$ & \checkmark\hspace{-1em}$\bigcirc$ &  $\boldsymbol{2\cdot G_{53}}$ \\
$54$ & \checkmark & \checkmark & \checkmark\hspace{-1em}$\bigcirc$ & \checkmark\hspace{-1em}$\bigcirc$ & $G_{107}+1$, $2\cdot (G_{53}+1)$ \\
$55$ & \checkmark & \checkmark & \checkmark\hspace{-1em}$\bigcirc$ & \checkmark\hspace{-1em}$\bigcirc$ & $G_{109}+1$ \\
$56$ & \checkmark & \checkmark & \checkmark\hspace{-1em}$\bigcirc$ & \phantom{\checkmark}\hspace{-1em}$\bigcirc$ & $F_{111}+1$ \\
$57$ & \checkmark & \checkmark & \checkmark\hspace{-1em}$\bigcirc$ & \checkmark\hspace{-1em}$\bigcirc$ & $G_{113}+1$ \\
$58$ & \checkmark & \checkmark & \checkmark\hspace{-1em}$\bigcirc$ & \phantom{\checkmark}\hspace{-1em}$\bigcirc$ & $F_{115}+1$ \\
$59$ & \checkmark & \checkmark & \checkmark\hspace{-1em}$\bigcirc$ & \checkmark\hspace{-1em}$\bigcirc$ & $2\cdot G_{59}$ \\
$60$ & \checkmark & \checkmark & \checkmark\hspace{-1em}$\bigcirc$ & \checkmark\hspace{-1em}$\bigcirc$ & $2\cdot (G_{59}+1)$ \\
$61$ & \checkmark & \checkmark & \checkmark\hspace{-1em}$\bigcirc$ & \checkmark\hspace{-1em}$\bigcirc$ & $G_{121}+1$, $\boldsymbol{2\cdot G_{61}}$ \\
$62$ & \checkmark & \checkmark & \checkmark\hspace{-1em}$\bigcirc$ & \checkmark\hspace{-1em}$\bigcirc$ & $2\cdot (G_{61}+1)$ \\
$63$ & \checkmark & \checkmark & \checkmark\hspace{-1em}$\bigcirc$ & \checkmark\hspace{-1em}$\bigcirc$ & $2\cdot\operatorname{Singer}(5,2)$ \\
$64$ & \checkmark & \checkmark & \checkmark\hspace{-1em}$\bigcirc$ & \checkmark\hspace{-1em}$\bigcirc$ & $G_{127}+1$ \\
$65$ & \checkmark\hspace{-1em}$\bigcirc$ & \checkmark\hspace{-1em}$\bigcirc$ & \checkmark\hspace{-1em}$\bigcirc$ & \phantom{\checkmark}\hspace{-1em}$\bigcirc$ & $\boldsymbol{2\cdot F_{65}}$ \\
$66$ & \checkmark & \checkmark & \checkmark\hspace{-1em}$\bigcirc$ & \checkmark\hspace{-1em}$\bigcirc$ & $G_{131}+1$ \\
$67$ & \checkmark & \checkmark & \checkmark\hspace{-1em}$\bigcirc$ & \checkmark\hspace{-1em}$\bigcirc$ & $2\cdot G_{67}$ \\
$68$ & \checkmark & \checkmark & \checkmark\hspace{-1em}$\bigcirc$ & \checkmark\hspace{-1em}$\bigcirc$ & $2\cdot (G_{67}+1)$ \\
$69$ & \checkmark & \checkmark & \checkmark\hspace{-1em}$\bigcirc$ & \checkmark\hspace{-1em}$\bigcirc$ & $G_{137}+1$ \\
$70$ & \checkmark & \checkmark & \checkmark\hspace{-1em}$\bigcirc$ & \checkmark\hspace{-1em}$\bigcirc$ & $G_{139}+1$ \\
$71$ & \checkmark & \checkmark & \checkmark\hspace{-1em}$\bigcirc$ & \checkmark\hspace{-1em}$\bigcirc$ & $2\cdot G_{71}$ \\
$72$ & \checkmark & \checkmark & \checkmark\hspace{-1em}$\bigcirc$ & \checkmark\hspace{-1em}$\bigcirc$ & $2\cdot (G_{71}+1)$ \\
$73$ & \checkmark\hspace{-1em}$\bigcirc$ & \checkmark\hspace{-1em}$\bigcirc$ & \checkmark\hspace{-1em}$\bigcirc$ & \checkmark\hspace{-1em}$\bigcirc$ & $\boldsymbol{2\cdot G_{73}}$ \\
$74$ & \checkmark & \checkmark & \checkmark\hspace{-1em}$\bigcirc$ & \checkmark\hspace{-1em}$\bigcirc$ & $2\cdot (G_{73}+1)$ \\
$75$ & \checkmark & \checkmark & \checkmark\hspace{-1em}$\bigcirc$ & \checkmark\hspace{-1em}$\bigcirc$ & $G_{149}+1$
\end{tabular}
}
\qquad\qquad
\scriptsize{
\begin{tabular}{rlllll}
$d$ & $\exists$ & x & \!$\exists2$ & \!x$2$ & notes \\ \hline
$76$ & \checkmark & \checkmark & \checkmark\hspace{-1em}$\bigcirc$ & \checkmark\hspace{-1em}$\bigcirc$ & $G_{151}+1$ \\
$77$ & \checkmark\hspace{-1em}$\bigcirc$ & \phantom{\checkmark}\hspace{-1em}$\bigcirc$ & \checkmark\hspace{-1em}$\bigcirc$ & \phantom{\checkmark}\hspace{-1em}$\bigcirc$ & \\
$78$ & \checkmark & \checkmark & \checkmark\hspace{-1em}$\bigcirc$ & \phantom{\checkmark}\hspace{-1em}$\bigcirc$ & $F_{155}+1$ \\
$79$ & \checkmark & \checkmark & \checkmark\hspace{-1em}$\bigcirc$ & \checkmark\hspace{-1em}$\bigcirc$ & $G_{157}+1$, $2\cdot G_{79}$ \\
$80$ & \checkmark & \checkmark & \checkmark\hspace{-1em}$\bigcirc$ & \checkmark\hspace{-1em}$\bigcirc$ & $2\cdot (G_{79}+1)$ \\
$81$ & \checkmark\hspace{-1em}$\bigcirc$ & \checkmark\hspace{-1em}$\bigcirc$ & \checkmark\hspace{-1em}$\bigcirc$ & \phantom{\checkmark}\hspace{-1em}$\bigcirc$ & $\boldsymbol{2\cdot G_{81}}$ \\
$82$ & \checkmark & \checkmark & \checkmark\hspace{-1em}$\bigcirc$ & \checkmark\hspace{-1em}$\bigcirc$ & $G_{163}+1$, $2\cdot (G_{81}+1)$ \\
$83$ & \checkmark & \checkmark & \checkmark\hspace{-1em}$\bigcirc$ & \checkmark\hspace{-1em}$\bigcirc$ & $2\cdot G_{83}$ \\
$84$ & \checkmark & \checkmark & \checkmark\hspace{-1em}$\bigcirc$ & \checkmark\hspace{-1em}$\bigcirc$ & $G_{167}+1$, $2\cdot (G_{83}+1)$ \\
$85$ & \checkmark & \checkmark & \checkmark\hspace{-1em}$\bigcirc$ & \checkmark\hspace{-1em}$\bigcirc$ & $G_{169}+1$ \\
$86$ & \checkmark & \checkmark & \checkmark\hspace{-1em}$\bigcirc$ & \phantom{\checkmark}\hspace{-1em}$\bigcirc$ & $F_{171}+1$ \\
$87$ & \checkmark & \checkmark & \checkmark\hspace{-1em}$\bigcirc$ & \checkmark\hspace{-1em}$\bigcirc$ & $G_{173}+1$ \\
$88$ & \checkmark & \checkmark & \checkmark\hspace{-1em}$\bigcirc$ & \phantom{\checkmark}\hspace{-1em}$\bigcirc$ & $F_{175}+1$ \\
$89$ & \checkmark\hspace{-1em}$\bigcirc$ & \checkmark\hspace{-1em}$\bigcirc$ & \checkmark\hspace{-1em}$\bigcirc$ & \checkmark\hspace{-1em}$\bigcirc$ & $\boldsymbol{2\cdot G_{89}}$ \\
$90$ & \checkmark & \checkmark & \checkmark\hspace{-1em}$\bigcirc$ & \checkmark\hspace{-1em}$\bigcirc$ & $G_{179}+1$, $2\cdot (G_{89}+1)$ \\
$91$ & \checkmark & \checkmark & \checkmark\hspace{-1em}$\bigcirc$ & \checkmark\hspace{-1em}$\bigcirc$ & $G_{181}+1$ \\
$92$ & \checkmark & \checkmark & \checkmark\hspace{-1em}$\bigcirc$ & \phantom{\checkmark}\hspace{-1em}$\bigcirc$ & $F_{183}+1$ \\
$93$ & \checkmark\hspace{-1em}$\bigcirc$ & \phantom{\checkmark}\hspace{-1em}$\bigcirc$ & \checkmark\hspace{-1em}$\bigcirc$ & \phantom{\checkmark}\hspace{-1em}$\bigcirc$ & \\
$94$ & \checkmark & \checkmark & \checkmark\hspace{-1em}$\bigcirc$ & \phantom{\checkmark}\hspace{-1em}$\bigcirc$ & $F_{187}+1$ \\
$95$ & \checkmark & \checkmark & \checkmark\hspace{-1em}$\bigcirc$ & \phantom{\checkmark}\hspace{-1em}$\bigcirc$ & $2\cdot F_{95}$ \\
$96$ & \checkmark & \checkmark & \checkmark\hspace{-1em}$\bigcirc$ & \checkmark\hspace{-1em}$\bigcirc$ & $G_{191}+1$ \\
$97$ & \checkmark & \checkmark & \checkmark\hspace{-1em}$\bigcirc$ & \checkmark\hspace{-1em}$\bigcirc$ & $G_{193}+1$, $\boldsymbol{2\cdot G_{97}}$ \\
$98$ & \checkmark & \checkmark & \checkmark\hspace{-1em}$\bigcirc$ & \checkmark\hspace{-1em}$\bigcirc$ & $2\cdot (G_{97}+1)$ \\
$99$ & \checkmark & \checkmark & \checkmark\hspace{-1em}$\bigcirc$ & \checkmark\hspace{-1em}$\bigcirc$ & $G_{197}+1$ \\
$100$ & \checkmark & \checkmark & \checkmark\hspace{-1em}$\bigcirc$ & \checkmark\hspace{-1em}$\bigcirc$ & $G_{199}+1$ \\
$101$ & \checkmark\hspace{-1em}$\bigcirc$ & \checkmark\hspace{-1em}$\bigcirc$ & \checkmark\hspace{-1em}$\bigcirc$ & \checkmark\hspace{-1em}$\bigcirc$ & $\boldsymbol{2\cdot G_{101}}$ \\
$102$ & \checkmark & \checkmark & \checkmark\hspace{-1em}$\bigcirc$ & \checkmark\hspace{-1em}$\bigcirc$ & $2\cdot (G_{101}+1)$ \\
$103$ & \checkmark & \checkmark & \checkmark\hspace{-1em}$\bigcirc$ & \checkmark\hspace{-1em}$\bigcirc$ & $2\cdot G_{103}$ \\
$104$ & \checkmark & \checkmark & \checkmark\hspace{-1em}$\bigcirc$ & \checkmark\hspace{-1em}$\bigcirc$ & $2\cdot (G_{103}+1)$ \\
$105$ & \checkmark\hspace{-1em}$\bigcirc$ & \phantom{\checkmark}\hspace{-1em}$\bigcirc$ & \checkmark\hspace{-1em}$\bigcirc$ & \phantom{\checkmark}\hspace{-1em}$\bigcirc$ & \\
$106$ & \checkmark & \checkmark & \checkmark\hspace{-1em}$\bigcirc$ & \checkmark\hspace{-1em}$\bigcirc$ & $G_{211}+1$ \\
$107$ & \checkmark & \checkmark & \checkmark\hspace{-1em}$\bigcirc$ & \checkmark\hspace{-1em}$\bigcirc$ & $2\cdot G_{107}$ \\
$108$ & \checkmark & \checkmark & \checkmark\hspace{-1em}$\bigcirc$ & \checkmark\hspace{-1em}$\bigcirc$ & $2\cdot (G_{107}+1)$ \\
$109$ & \checkmark\hspace{-1em}$\bigcirc$ & \checkmark\hspace{-1em}$\bigcirc$ & \checkmark\hspace{-1em}$\bigcirc$ & \checkmark\hspace{-1em}$\bigcirc$ & $\boldsymbol{2\cdot G_{109}}$ \\
$110$ & \checkmark & \checkmark & \checkmark\hspace{-1em}$\bigcirc$ & \checkmark\hspace{-1em}$\bigcirc$ & $2\cdot (G_{109}+1)$ \\
$111$ & \checkmark & \checkmark & \checkmark\hspace{-1em}$\bigcirc$ & \phantom{\checkmark}\hspace{-1em}$\bigcirc$ & $2\cdot F_{111}$ \\
$112$ & \checkmark & \checkmark & \checkmark\hspace{-1em}$\bigcirc$ & \checkmark\hspace{-1em}$\bigcirc$ & $G_{223}+1$ \\
$113$ & \checkmark & \checkmark & \checkmark\hspace{-1em}$\bigcirc$ & \checkmark\hspace{-1em}$\bigcirc$ & $\boldsymbol{2\cdot G_{113}}$ \\
$114$ & \checkmark & \checkmark & \checkmark\hspace{-1em}$\bigcirc$ & \checkmark\hspace{-1em}$\bigcirc$ & $G_{227}+1$, $2\cdot (G_{113}+1)$ \\
$115$ & \checkmark & \checkmark & \checkmark\hspace{-1em}$\bigcirc$ & \checkmark\hspace{-1em}$\bigcirc$ & $G_{229}+1$ \\
$116$ & \checkmark & \checkmark & \checkmark\hspace{-1em}$\bigcirc$ & \phantom{\checkmark}\hspace{-1em}$\bigcirc$ & $F_{231}+1$ \\
$117$ & \checkmark & \checkmark & \checkmark\hspace{-1em}$\bigcirc$ & \checkmark\hspace{-1em}$\bigcirc$ & $G_{233}+1$ \\
$118$ & \checkmark & \checkmark & \checkmark\hspace{-1em}$\bigcirc$ & \phantom{\checkmark}\hspace{-1em}$\bigcirc$ & $F_{235}+1$ \\
$119$ & \checkmark & \checkmark & \checkmark\hspace{-1em}$\bigcirc$ & \phantom{\checkmark}\hspace{-1em}$\bigcirc$ & $2\cdot F_{119}$ \\
$120$ & \checkmark & \checkmark & \checkmark\hspace{-1em}$\bigcirc$ & \checkmark\hspace{-1em}$\bigcirc$ & $G_{239}+1$ \\
$121$ & \checkmark & \checkmark & \checkmark\hspace{-1em}$\bigcirc$ & \checkmark\hspace{-1em}$\bigcirc$ & $G_{241}+1$ \\
$122$ & \checkmark & \checkmark & \checkmark\hspace{-1em}$\bigcirc$ & \checkmark\hspace{-1em}$\bigcirc$ & $2\cdot (G_{121}+1)$ \\
$123$ & \checkmark & \checkmark & \checkmark\hspace{-1em}$\bigcirc$ & \phantom{\checkmark}\hspace{-1em}$\bigcirc$ & $2\cdot F_{123}$ \\
$124$ & \checkmark & \checkmark & \checkmark\hspace{-1em}$\bigcirc$ & \phantom{\checkmark}\hspace{-1em}$\bigcirc$ & $F_{247}+1$ \\
$125$ & \checkmark\hspace{-1em}$\bigcirc$ & \checkmark\hspace{-1em}$\bigcirc$ & \checkmark\hspace{-1em}$\bigcirc$ & \phantom{\checkmark}\hspace{-1em}$\bigcirc$ & $\boldsymbol{2\cdot G_{125}}$ \\
$126$ & \checkmark & \checkmark & \checkmark\hspace{-1em}$\bigcirc$ & \checkmark\hspace{-1em}$\bigcirc$ & $G_{251}+1$, $2\cdot (G_{125}+1)$ \\
$127$ & \checkmark & \checkmark & \checkmark\hspace{-1em}$\bigcirc$ & \checkmark\hspace{-1em}$\bigcirc$ & $2\cdot G_{127}$ \\
$128$ & \checkmark & \checkmark & \checkmark\hspace{-1em}$\bigcirc$ & \checkmark\hspace{-1em}$\bigcirc$ & $2\cdot (G_{127}+1)$ \\
$129$ & \checkmark & \checkmark & \checkmark\hspace{-1em}$\bigcirc$ & \checkmark\hspace{-1em}$\bigcirc$ & $G_{257}+1$ \\
$130$ & \checkmark & \checkmark & \checkmark\hspace{-1em}$\bigcirc$ & \phantom{\checkmark}\hspace{-1em}$\bigcirc$ & $F_{259}+1$ \\
$131$ & \checkmark & \checkmark & \checkmark\hspace{-1em}$\bigcirc$ & \checkmark\hspace{-1em}$\bigcirc$ & $2\cdot G_{131}$ \\
$132$ & \checkmark & \checkmark & \checkmark\hspace{-1em}$\bigcirc$ & \checkmark\hspace{-1em}$\bigcirc$ & $G_{263}+1$, $2\cdot (G_{131}+1)$ \\
$133$ & \checkmark\hspace{-1em}$\bigcirc$ & \phantom{\checkmark}\hspace{-1em}$\bigcirc$ & \checkmark\hspace{-1em}$\bigcirc$ & \phantom{\checkmark}\hspace{-1em}$\bigcirc$ & \\
$134$ & \checkmark & \checkmark & \checkmark\hspace{-1em}$\bigcirc$ & \phantom{\checkmark}\hspace{-1em}$\bigcirc$ & $F_{267}+1$ \\
$135$ & \checkmark & \checkmark & \checkmark\hspace{-1em}$\bigcirc$ & \checkmark\hspace{-1em}$\bigcirc$ & $G_{269}+1$ \\
$136$ & \checkmark & \checkmark & \checkmark\hspace{-1em}$\bigcirc$ & \checkmark\hspace{-1em}$\bigcirc$ & $G_{271}+1$ \\
$137$ & \checkmark\hspace{-1em}$\bigcirc$ & \checkmark\hspace{-1em}$\bigcirc$ & \checkmark\hspace{-1em}$\bigcirc$ & \checkmark\hspace{-1em}$\bigcirc$ & $\boldsymbol{2\cdot G_{137}}$ \\
$138$ & \checkmark & \checkmark & \checkmark\hspace{-1em}$\bigcirc$ & \checkmark\hspace{-1em}$\bigcirc$ & $2\cdot (G_{137}+1)$ \\
$139$ & \checkmark & \checkmark & \checkmark\hspace{-1em}$\bigcirc$ & \checkmark\hspace{-1em}$\bigcirc$ & $G_{277}+1$, $2\cdot G_{139}$ \\
$140$ & \checkmark & \checkmark & \checkmark\hspace{-1em}$\bigcirc$ & \checkmark\hspace{-1em}$\bigcirc$ & $2\cdot (G_{139}+1)$ \\
$141$ & \checkmark & \checkmark & \checkmark\hspace{-1em}$\bigcirc$ & \checkmark\hspace{-1em}$\bigcirc$ & $G_{281}+1$ \\
$142$ & \checkmark & \checkmark & \checkmark\hspace{-1em}$\bigcirc$ & \checkmark\hspace{-1em}$\bigcirc$ & $G_{283}+1$ \\
$143$ & \checkmark & \checkmark & \checkmark\hspace{-1em}$\bigcirc$ & \checkmark\hspace{-1em}$\bigcirc$ & $2\cdot\operatorname{TPP}(11)$ \\
$144$ & \checkmark & \checkmark & \checkmark\hspace{-1em}$\bigcirc$ & \phantom{\checkmark}\hspace{-1em}$\bigcirc$ & $F_{287}+1$ \\
$145$ & \checkmark & \checkmark & \checkmark\hspace{-1em}$\bigcirc$ & \checkmark\hspace{-1em}$\bigcirc$ & $G_{289}+1$ \\
$146$ & \checkmark & \checkmark & \checkmark\hspace{-1em}$\bigcirc$ & \phantom{\checkmark}\hspace{-1em}$\bigcirc$ & $F_{291}+1$ \\
$147$ & \checkmark & \checkmark & \checkmark\hspace{-1em}$\bigcirc$ & \checkmark\hspace{-1em}$\bigcirc$ & $G_{293}+1$ \\
$148$ & \checkmark & \checkmark & \checkmark\hspace{-1em}$\bigcirc$ & \phantom{\checkmark}\hspace{-1em}$\bigcirc$ & $F_{295}+1$ \\
$149$ & \checkmark\hspace{-1em}$\bigcirc$ & \checkmark\hspace{-1em}$\bigcirc$ & \checkmark\hspace{-1em}$\bigcirc$ & \checkmark\hspace{-1em}$\bigcirc$ & $\boldsymbol{2\cdot G_{149}}$ \\
$150$ & \checkmark & \checkmark & \checkmark\hspace{-1em}$\bigcirc$ & \checkmark\hspace{-1em}$\bigcirc$ & $2\cdot (G_{149}+1)$
\end{tabular}
}
\end{center}
\end{table}

\section*{Acknowledgments}

JWI was supported by NSF DMS
2220301 and a grant from the Simons Foundation. 
JJ was supported by NSF DMS 2220320. 
DGM was supported by NSF DMS 2220304 and an Air Force Summer Faculty Fellowship. 
The views expressed are those of the authors and do not reflect the official guidance or position of the United States Government, the Department of Defense, the United States Air Force, or the United States Space Force.

\appendix

\section{Proof of Proposition~\ref{prop.paley conference matrices are the equivalent}}

For (a), consider the symplectic form on $\mathbb{F}_q^2$ defined by
\[
\big(
~(a,b),~(c,d)~
\big)
\mapsto\operatorname{det}\left(\left[\begin{array}{cc} a&c\\b&d \end{array}\right]\right),
\]
and put $t_\alpha:=(\alpha,1)$ for $\alpha\in\mathbb{F}_q$ and $t_\infty:=(1,0)$.
(One may reindex with $[q+1]$, but we use this indexing for convenience.)
The resulting conference matrix in Example~\ref{ex.symplectic form construction} has entries $C_{i,j}=\chi\left(\operatorname{det}\left(\left[\begin{array}{cc}t_i&t_j\end{array}\right]\right)\right)$ with $i,j\in \mathbb{F}_q\cup\{\infty\}$.
For $\alpha,\beta\in\mathbb{F}_q$, we have
\[
C_{\alpha,\beta}
=\chi(\alpha-\beta)
=\left\{\begin{array}{rl} 
+1&\text{if $\alpha-\beta$ is a quadratic residue}\\
0&\text{if $\alpha=\beta$}\\
-1&\text{else}\end{array}\right.
\]
and
\[
C_{\infty,\infty}
=0,
\qquad
C_{\infty,\beta}
=\chi(1)
=1,
\qquad
C_{\alpha,\infty}
=\chi(-1)
=\left\{\begin{array}{rl}
+1&\text{if $q\equiv 1\bmod 4$}\\
-1&\text{if $q\equiv 3\bmod 4$,}
\end{array}\right.
\]
meaning $C$ is the Paley conference matrix of order $q+1$ (up to reindexing).

For (b), it suffices to show that every conference matrix in Example~\ref{ex.symplectic form construction} is switching equivalent to the Paley conference matrix.
To this end, fix a $2$-dimensional vector space $V$ over $\mathbb{F}_q$ with symplectic form $[\cdot,\cdot]$ and $\tilde{t}_1,\ldots,\tilde{t}_{q+1}\in V$ that span distinct lines in $V$, and let $\tilde{C}$ denote the resulting conference matrix.
First, we recall that all symplectic forms are equivalent, i.e., there exists an invertible linear map $T\colon V\to\mathbb{F}_q^2$ such that $[u,v]=\operatorname{det}\left(\left[\begin{array}{cc}Tu&Tv\end{array}\right]\right)$ for all $u,v\in V$.
Then $T\tilde{t}_1,\ldots,T\tilde{t}_{q+1}\in\mathbb{F}_q^2$ are nonzero vectors that span distinct lines in $\mathbb{F}_q^2$.
Meanwhile, by our proof of (a), the Paley conference matrix $C$ uses a possibly different choice of vectors $\{t_x\}_{x\in\mathbb{F}_q\cup\{\infty\}}$ with the same property.
Thus, there exists a bijection $f\colon[q+1]\to\mathbb{F}_q\cup\{\infty\}$ and scalars $c_1,\ldots,c_{q+1}\in\mathbb{F}_q^\times$ such that $t_{f(i)}=c_i T\tilde{t}_i$.
Then
\begin{align*}
C_{f(i),f(j)}
&=\chi\left(\operatorname{det}\left(\left[\begin{array}{cc}t_{f(i)}&t_{f(j)}\end{array}\right]\right)\right)
=\chi\left(\operatorname{det}\left(\left[\begin{array}{cc}c_i T\tilde{t}_i&c_j T\tilde{t}_j\end{array}\right]\right)\right)
\\
&=\chi\left(c_ic_j\operatorname{det}\left(\left[\begin{array}{cc}T\tilde{t}_i&T\tilde{t}_j\end{array}\right]\right)\right)
=\chi(c_i)\chi(c_j)\chi([\tilde{t}_i,\tilde{t}_j])
=\chi(c_i)\chi(c_j)\tilde{C}_{i,j}.
\end{align*}
That is, $\tilde{C}$ is switching equivalent of $C$.

\end{document}